\newcommand{\bbone}{\text{\usefont{U}{bbold}{m}{n}1}}
\newcommand{\skalarProd}[2]{\langle#1,#2\rangle}
\newcommand{\norm}[1]{\lVert#1\rVert}
\newcommand{\abs}[1]{\lvert#1\rvert}
\newcommand{\D}{\operatorname{D}\hspace{-1.5pt}}
\newcommand{\Curl}{\operatorname{Curl}}
\newcommand{\Div}{\operatorname{Div}}
\newcommand{\curl}{\operatorname{curl}}
\renewcommand{\div}{\operatorname{div}}
\newcommand{\inc}{\boldsymbol{\operatorname{inc}}\,}
\newcommand{\Anti}{\operatorname{Anti}}
\newcommand{\dif}{\mathrm{d}}
\newcommand{\SLin}{\mathrm{SLin}}
\newcommand{\ball}{\mathrm{B}}
\newcommand{\mres}{\mathbin{\vrule height 1.6ex depth 0pt width
0.13ex\vrule height 0.13ex depth 0pt width 1.3ex}}
\newcommand{\A}{\mathbb{A}}
\newcommand{\B}{\mathbb{B}}
\newcommand{\Acal}{\mathscr{A}}
\newcommand{\T}{\mathsf{T}}
\newcommand{\tr}{\operatorname{tr}}
\newcommand{\sym}{\operatorname{sym}}
\renewcommand{\skew}{\operatorname{skew}}
\newcommand{\dev}{\operatorname{dev}}
\newcommand{\Bcal}{\mathscr{B}}
\newcommand{\devsym}{\dev\sym}
\newcommand{\spt}{\mathrm{supp}}
\newcommand{\dashint}{\fint}
\newcommand{\bigzero}{\mbox{\normalfont\Large\bfseries 0}}
\newcommand{\ntimes}[1]{\times_{#1}}
\newcommand{\nMat}[2]{\left\llbracket#1\right\rrbracket_{\ntimes{#2}}}
\newcommand{\N}{\mathbb{N}}
\newcommand{\Z}{\mathbb{Z}}
\newcommand{\R}{\mathbb{R}}
\newcommand{\C}{\mathbb{C}}
\newcommand{\lebe}{\operatorname{L}}
\newcommand{\sobo}{\operatorname{W}}
\newcommand{\hold}{\operatorname{C}}
\newcommand{\locc}{\mathrm{loc}}
\renewcommand{\a}{\boldsymbol{a}}
\newcommand{\imag}{\operatorname{i}}
\newcommand{\X}{\EuScript{X}}
\numberwithin{equation}{section}
\theoremstyle{plain}
\newtheorem{theorem}{Theorem}[section]
\newtheorem{lemma}[theorem]{Lemma}
\newtheorem{proposition}[theorem]{Proposition}
\newtheorem{corollary}[theorem]{Corollary}
\newtheorem{theoremA}{Theorem}
\theoremstyle{remark}
\newtheorem{remark}[theorem]{Remark}
\newtheorem{example}[theorem]{Example}
\newcounter{alphasect}
\def\alphainsection{0}
\let\oldsection=\section
\def\section{%
  \ifnum\alphainsection=1%
    \addtocounter{alphasect}{1}
  \fi%
\oldsection}%
\renewcommand\thesection{%
  \ifnum\alphainsection=1%
    \Alph{alphasect}%
  \else%
    \arabic{section}%
  \fi%
}%
\newenvironment{alphasection}{%
  \ifnum\alphainsection=1%
    \errhelp={Let other blocks end at the beginning of the next block.}
    \errmessage{Nested Alpha section not allowed}
  \fi%
  \setcounter{alphasect}{0}
  \def\alphainsection{1}
}{%
  \setcounter{alphasect}{0}
  \def\alphainsection{0}
}%
\definecolor{fg}{RGB}{34,139,34}  
\date\today
\keywords{Korn's inequality, Sobolev inequalities, incompatible tensor fields.}
\subjclass[2020]{35A23, 26D10, 35Q74/35Q75, 46E35}
\title[Generalised KMS Inequalities]{Korn-Maxwell-Sobolev inequalities \\ for general incompatibilities}
\author[F. Gmeineder]{Franz Gmeineder} 
\address{Franz Gmeineder: University of Konstanz, Fachbereich Mathematik und Statistik, Universit\"{a}tsstra\ss e 10, 78464 Konstanz, Germany}
\email{franz.gmeineder@uni-konstanz.de}
\author[P. Lewintan]{Peter Lewintan} 
\author[P. Neff]{Patrizio Neff} 
\address{Peter Lewintan: University of Duisburg-Essen, Faculty of Mathematics, Thea-Leymann-Strasse 9, 45127 Essen, Germany;\qquad current address: Karlsruhe Institute of Technology, Department of Mathematics, Englerstrasse
2, 76131 Karlsruhe, Germany}
\address{Patrizio Neff: University of Duisburg-Essen, Faculty of Mathematics, Thea-Leymann-Strasse 9, 45127 Essen, Germany}
\email{peter.lewintan@uni-due.de, peter.lewintan@kit.edu}
\email{patrizio.neff@uni-due.de}
\begin{document}

\begin{tikzpicture}[remember picture, overlay]
 \node [xshift=-1cm,yshift=15cm,rotate=-90] at (current page.south east)
 {The final publication appeares in Mathematical Models and Methods in Applied Sciences (2024), doi: \href{https://doi.org/10.1142/S0218202524500088}{10.1142/S0218202524500088   
}.
 };
\end{tikzpicture}

\numberwithin{equation}{section}

\begin{abstract}
We establish a family of coercive Korn-type inequalities for generalised incompatible fields in the superlinear growth regime under sharp criteria.  This extends and unifies several previously known inequalities that are pivotal to the existence theory for a multitude of models in continuum mechanics in an optimal way. Different from  our preceding work \cite{GLN}, where we focussed on the case $p=1$ and incompatibilities governed by the matrix curl, the case $p>1$ considered in the present paper gives us access to substantially stronger results from harmonic analysis but conversely deals with more general incompatibilities. Especially, we obtain sharp generalisations of recently proved inequalities by the last two authors and \textsc{M\"{u}ller} \cite{LMN} in the realm of incompatible Korn-type inequalities with conformally invariant dislocation energy. However, being applicable to higher order scenarios as well, our approach equally gives the first and sharp inequalities involving \textsc{Kr\"{o}ner}'s incompability tensor $\inc$.
\end{abstract}
\maketitle
\setcounter{tocdepth}{1}

\tableofcontents
\vspace{-0.5cm}
\section{Introduction}
\subsection{Korn-Maxwell-Sobolev inequalities}
Let $\Omega\subset\R^{n}$ be an open and bounded set with Lipschitz boundary. A key device in the study of variational principles or non-linear partial differential equations from elasticity or fluid mechanics are the so-called \emph{Korn inequalities}. Such inequalities   allow us to control the $\lebe^{q}$-norms of the \emph{full gradients} by merely controlling their \emph{symmetric parts} (cf.~e.g.~\cite{Ciarlet1,Ciarlet2,Ciarlet3,Korn,PayneWeinberger}). There are different forms of such inequalities, and in the following we shall focus on two such core inequalities that are distinguished by \emph{zero} or \emph{non-zero} boundary values of the admissible competitors. Specifically, for any $1<q<\infty$ there exists a constant $c=c(n,q)>0$ such that we have for all $u\in\sobo_{0}^{1,q}(\Omega;\R^{n})$
\begin{align}\label{eq:KornA}\tag{K1}
\norm{\D u}_{\lebe^q (\Omega)} \leq c\,\norm{\sym \D u}_{\lebe^q (\Omega)}.
\end{align}
This inequality persists when $\Omega$ is replaced by $\R^{n}$, in which case \eqref{eq:KornA} can be reduced to classical Calder\'{o}n-Zygmund estimates. In the sequel, we shall refer to \eqref{eq:KornA} and variants thereof to as \emph{Korn inequalities of the first kind}. It is easily seen that inequality \eqref{eq:KornA} does not hold true when considering Sobolev maps $u\in(\sobo^{1,q}\setminus\sobo_{0}^{1,q})(\Omega;\R^{n})$. Indeed, the set of \emph{rigid deformations} $\mathscr{R}$ (i.e., maps of the form $u(x)=\boldsymbol A x+ \boldsymbol b$ with a skew-symmetric matrix $\boldsymbol A\in\R^{n\times n}$, which we also express by writing $\boldsymbol A\in\mathfrak{so}(n)$, and $\boldsymbol b\in\R^{n}$) are contained in the nullspace of the symmetric, but not of the full gradients. To arrive at a valid inequality, the corresponding estimates must keep track of the rigid deformations. Such bounds are provided by the \emph{Korn-type inequalities of the second kind}: For any connected open and bounded set $\Omega\subset\R^{n}$ with Lipschitz boundary and any $1<q<\infty$, there exists a constant $c=c(n,q,\Omega)>0$ such that 
\begin{align}\label{eq:KornB}\tag{K2}
\inf_{\Pi\in\mathscr{R}}\norm{\D u-\Pi}_{\lebe^q (\Omega)} \leq c\,\norm{\sym \D u}_{\lebe^q (\Omega)}
\end{align}
holds for all $u\in\sobo^{1,q}(\Omega;\R^{n})$. 

In many applications -- so for instance in elastoplasticity models, fluid mechanical problems and their numerical approximation through finite element methods, see Section \ref{sec:models} for an overview -- the underlying differential nature of $\D u$ or $\sym\D u$ is \emph{not} available. This requires refinements of \eqref{eq:KornA} and \eqref{eq:KornB} which, by now, has been accomplished in different but only special situations
\cite{Arnold,BauerNeffPaulyStarke,GLP,GLN,GmSp,LMN,LN3-tracefree,LN4-tracefree,LN2,LN1,NeffPlastic,NeffPaulyWitsch} and still lacks a unifying perspective. To motivate such inequalities in their easiest form, let us note that there are no constants $c,c'>0$ such that 
\begin{align}\label{eq:crap}
\begin{split}
\norm{P}_{\lebe^q (\Omega)} &\leq c\,\norm{\sym P}_{\lebe^q (\Omega)},\\ 
\inf_{\boldsymbol A\in\mathfrak{so}(n)}\norm{P-\boldsymbol A}_{\lebe^q (\Omega)} &\leq c' \norm{\sym P}_{\lebe^q (\Omega)}
\end{split}
\end{align}
hold for all $P\in\lebe^{q}(\Omega;\R^{n\times n})$. For $\eqref{eq:crap}_{1}$ this can be easily seen by considering maps that take values in $\mathfrak{so}(n)$. The failure of $\eqref{eq:crap}_{2}$ can be established similarly: Pick an arbitrary continuous function $\zeta\colon\overline{\Omega}\to\R$ with $\zeta\not\equiv 0$ and mean value $(\zeta)_\Omega = 0$, so that in particular $\norm{\zeta}_{\lebe^q (\Omega)}>0$. Then we have for all $\boldsymbol{\alpha}\in\R$:
\begin{align}\label{eq:inf}
0<\norm{\zeta}_{\lebe^q} = \norm{\zeta-(\zeta)_{\Omega}}_{\lebe^q} \leq 2 \norm{\zeta-\boldsymbol{\alpha}}_{\lebe^q},\;\text{so}\;\;0<\inf_{\boldsymbol{\alpha}\in\R} \norm{\zeta-\boldsymbol{\alpha}}_{\lebe^q}. 
\end{align}
Considering the matrix field $P\colon\Omega\to\R^{n\times n}$ defined by 
\begin{align*}
P(x)\coloneqq\left(\begin{array}{@{}c|c@{}}
  \begin{matrix}
  0 & -\zeta(x) \\
  \zeta(x) & 0
  \end{matrix}
  & \bigzero \\
\hline
  \bigzero &
  \bigzero
\end{array}\right),
\end{align*}
inequality~$\eqref{eq:crap}_{2}$ then yields the contradictory 
\begin{align*}
0 & \stackrel{\eqref{eq:inf}}{<} \inf_{\boldsymbol{\alpha}_1,\ldots,\boldsymbol{\alpha}_{\frac{n(n-1)}{2}}\in\R}\left\lVert \begin{pmatrix}  0 & -\zeta(x)-\boldsymbol{\alpha}_1 & -\boldsymbol{\alpha}_2 & -\boldsymbol{\alpha}_4 &\vdots\\ \zeta(x)+\boldsymbol{\alpha}_1 & 0 & -\boldsymbol{\alpha}_3 & -\boldsymbol{\alpha}_5 & \vdots \\ \boldsymbol{\alpha}_2 & \boldsymbol{\alpha}_3 & 0 & -\boldsymbol{\alpha}_6 & \vdots \\ \boldsymbol{\alpha}_4 & \boldsymbol{\alpha}_5 & \boldsymbol{\alpha}_6 & 0 & \vdots \\ \cdots & \cdots & \cdots & \cdots & 0 \end{pmatrix} \right\rVert_{\lebe^q (\Omega)}  \stackrel{\eqref{eq:crap}_{2}}{\leq} 0.
\end{align*}
In light of the failure of \eqref{eq:crap}, we see that it is indeed the gradient structure of the specific matrix fields $P=\D u$ that make inequalities~\eqref{eq:KornA} and \eqref{eq:KornB} work. Recalling that on simply connected domains $\Omega\subset\R^{n}$ a sufficiently smooth map $P\colon\Omega\to\R^{n\times n}$ is a gradient -- hence is \emph{compatible} -- if and only if it satisfies $\Curl P=0$, it is natural to ask for substitutes of~\eqref{eq:crap} that account for the lack of curl-freeness of arbitrary matrix fields $P$. This is achieved by the \emph{Korn-Maxwell-Sobolev inequalities} (for brevity, KMS-inequalities). In such estimates, the non-valid inequalities \eqref{eq:crap} are modified by an additive $\Curl$-term on their right-hand sides as a corrector, taking into account the non-$\Curl $-freeness of generic competitor maps. Such inequalities, see e.g. \cite{LN2,LN1}, assert the existence of $c=c(n,q)>0$ and $c'=c'(n,q,\Omega)>0$
\begin{align}\label{eq:KMSbasic}
\begin{split}
\norm{P}_{\lebe^q(\Omega)} & \leq c\Big(\norm{\sym P}_{\lebe^q(\Omega)}+\norm{\Curl P}_{\lebe^{p}(\Omega)} \Big),\quad P\in\hold_{c}^{\infty}(\Omega;\R^{n\times n}),\\
\inf_{\boldsymbol A\in\mathfrak{so}(n)}\norm{P-\boldsymbol A}_{\lebe^q (\Omega)} &\leq c'\Big(\norm{\sym P}_{\lebe^q (\Omega)}+\norm{\Curl P}_{\lebe^{p}(\Omega)}\Big),\quad P\in\hold^{\infty}(\overline{\Omega};\R^{n\times n}). 
\end{split}
\end{align}
Note that $q$ determines the range of possible exponents $p$ (and vice versa), and that the optimal $p$ for given $q$ is given by the corresponding Sobolev- or Morrey conjugate exponent (also see Section
\ref{sec:genincompatibiltiies} below). These inequalities are the starting point for the present paper. Aiming to generalise \eqref{eq:KMSbasic} in a basically optimal way and thereby to provide a unifying framework for a wealth of coercive inequalities used in applications, we proceed by giving the detailled underlying set-up of the desired inequalities first. 

\subsection{Generalised incompatibilities}\label{sec:genincompatibiltiies}
As discussed at length in \cite{LMN} (also see Section \ref{sec:models} below), applications from elasticity such as the so-called \emph{relaxed micromorphic model} require estimates that hinge on strictly weaker quantities than $\Curl P$. To provide a unifying approach to the matter, we shall thus consider more general 
\begin{itemize}
\item[(i)] \emph{parts} $\Acal[P]$ than the symmetric part as in \eqref{eq:KMSbasic}, and
\item[(ii)] \emph{differential operators} $\B$ than the matrix curl as in \eqref{eq:KMSbasic}. 
\end{itemize}
As such, the corresponding variants of \eqref{eq:KMSbasic} shall be referred to as \emph{generalised KMS-inequalities}. Specifically, let $V,W,\widetilde{V}$ be three finite dimensional real inner product spaces, $k\in\N$ and let $\B$ be a $k$-th order, linear and homogeneous differential operator on $\R^{n}$ from $V$ to $W$. By this we understand that $\B$ has a representation 
\begin{align}\label{eq:diffopB}
\B=\sum_{\abs{\alpha}=k}\B_\alpha \partial^{\alpha}
\end{align}
with fixed linear maps $\B_{\alpha}\colon V\to W$ for $\alpha\in\N_{0}^{n}$. Towards (i), we let $\Acal\colon V\to \widetilde{V}$ be a linear \emph{part map}. Let $1<p<\infty$. Aiming to generalise \eqref{eq:KornA}, we first work on the entire $\R^{n}$ and wish to classify all parts $\Acal$, differential operators $\B$ and exponents $q$ for which we have validity of the \emph{generalised Korn-Maxwell-Sobolev inequality of the first kind}:
\begin{align}\label{eq:KMS1}\tag{KMS1}
\norm{P}_{\X_{k,p}(\R^{n})}\leq c\Big(\norm{\Acal[P]}_{\X_{k,p}(\R^{n})} + \norm{\B P}_{\lebe^{p}(\R^{n})} \Big) ,\qquad P\in\hold_{c}^{\infty}(\R^{n};V).
\end{align}
Here, the function spaces $\X_{k,p}$ (and consequently their norms $\norm{\boldsymbol\cdot}_{\X_{k,p}(\R^{n})}$) are chosen in a way such that \eqref{eq:KMS1} scales suitably. Specifically, if we choose $k=1$ and work with Lebesgue spaces, then validity of the inequality
\begin{align*}
\norm{P}_{\lebe^{q}(\R^{n})}\leq c\Big(\norm{\Acal[P]}_{\lebe^{q}(\R^{n})} + \norm{\B P}_{\lebe^{p}(\R^{n})} \Big) ,\qquad P\in\hold_{c}^{\infty}(\R^{n};V), 
\end{align*}
with $1<p<n$ directly determines $q$ to equal $\frac{np}{n-p}$ by considering maps $P_{\lambda}(x)\coloneqq P(\frac{x}{\lambda})$ for $\lambda>0$. More generally and hence for $k>1$, suitable choices of $\X_{k,p}$ are given by\footnote{Here we adopt the convention ${\dot{\sobo}}{^{0,q}}\coloneqq\lebe^{q}$ for $1<q<\infty$.} $\X_{k,p}={\dot{\sobo}}{^{k-1,\frac{np}{n-p}}}$ (homogeneous Sobolev spaces) if $1<p<n$ or $\X_{k,p}={\dot{\hold}}{^{k-1,1-\frac{n}{p}}}$ (homogeneous H\"{o}lder spaces) if $p>n$. Also, validity of inequality \eqref{eq:KMS1} immediately implies the corresponding inequality 
\begin{align}\label{eq:KMSMain1A} 
\norm{P}_{\X_{k,p}(\Omega)}\leq c\Big(\norm{\Acal[P]}_{\X_{k,p}(\Omega)} + \norm{\B P}_{\lebe^{p}(\Omega)} \Big) ,\qquad P\in\hold_{c}^{\infty}(\Omega;V),
\end{align}
for any open and bounded set $\Omega\subset\R^{n}$. This can be seen by trivially extending maps $P\in\hold_{c}^{\infty}(\Omega;V)$ to the entire $\R^{n}$. 

By the link to the classical Korn-type inequalities \eqref{eq:KornA}, we shall put some emphasis on inequalities involving Lebesgue norms. 
 One retrieves the known Korn-Maxwell-Sobolev inequality \eqref{eq:KMSbasic}$_{1}$ by specifying $V=\widetilde{V}=\R^{n\times n}$, $\Acal=\sym$ and $\B=\Curl$. Other inequalities that appear as special cases and arise in concrete models shall be addressed in Section \ref{sec:models} below. However, to explain some of the mechanisms underlying inequalities of the form \eqref{eq:KMSMain1A}, let us note that there is some coupling between $\Acal$ and $\B$: Heuristically, the \emph{stronger} $\B$ becomes, the \emph{weaker} we may assume $\Acal$ to be, and vice versa. As an important instance of this effect, let us compare the classical $\div$-$\curl$-complex with the \textsc{Saint Venant} or \emph{elasticity complex}:
\begin{equation}\label{eq:complexes}
\begin{tikzcd}[column sep=large]
 \hold^{\infty}(\R^{n};\R^n) \arrow[r, "\D"] & \hold^{\infty}(\R^{n};\R^{n\times n}) \arrow[r, "\Curl"] & \hold^{\infty}(\R^{n};\R^{n\times\frac{n(n-1)}{2}}) \\[-2ex]
 \hold^{\infty}(\R^{n};\R^n ) \arrow[r, "\sym\D"] & \hold^{\infty}(\R^{n};\mathrm{Sym}(n)) \arrow[r, "\Curl(\Curl^\top )"] & \hold^{\infty}(\R^{n};\mathrm{Sym}(\frac{n(n-1)}{2} )),
\end{tikzcd}
\end{equation}
with the symmetric $(n\times n)$-matrices $\mathrm{Sym}(n)$; see \textsc{Ciarlet} et al. \cite{AmroucheCiarletGratie0,AmroucheCiarletGratie,Ciarlet2a} for more on the complex $\eqref{eq:complexes}_{2}$ and \cite{Amstutz1,Amstutz2,Cia1,Cia2,PaulySchomburg} on its role in  (in)compatible elasticity. 
Similarly as in $\eqref{eq:complexes}_{1}$, where the exactness at the mid vector space implies that an $\R^{n\times n}$-valued map on $\R^{n}$ is a gradient if and only if it is $\Curl$-free, $\eqref{eq:complexes}_{2}$ expresses the fact that a $\mathrm{Sym}(n)$-valued map is a symmetric gradient if and only if it is $\Curl (\Curl)^\top$-free. However, if a map into the \emph{symmetric} matrices is already $\Curl$-free, then it is already a symmetric gradient. Hence, the \emph{compatibility condition} $\inc P \coloneqq \Curl((\Curl P)^\top )=0$ is weaker than $\Curl P=0$ on the $\mathrm{Sym}(n)$-valued maps $P$; $\inc$ is also referred to as \textsc{Kr\"{o}ner}'s \emph{strain incompatibility tensor} \cite{Kroener} (see Sections  \ref{sec:models}, \ref{sec:appendix} for this terminology). There are two borderline cases that we wish to address explicitly: If $\B$ is elliptic (see Section~\ref{sec:notions} for this terminology), then \eqref{eq:KMSMain1A} is satisfied even for $\Acal\equiv 0$. On the other hand, if $\B\equiv 0$, then \eqref{eq:KMSMain1A} is only satisfied if $\Acal$ is injective\footnote{If $\Acal$ is not injective, take $\boldsymbol P\in\ker\Acal\setminus\{0\}$ and consider $P_{\varphi}\coloneqq \varphi \boldsymbol P$ for $\varphi\in\hold_{c}^{\infty}(\R^{n})\setminus\{0\}$.}. In between the borderline cases, a non-trivial obstruction to estimates \eqref{eq:KMSMain1A} is given by the following examples: 
\begin{example}\label{ex:algebraicobst}
For $P\in\R^{n\times n}$, we define the \emph{deviatoric part} $\dev P\coloneqq P-\frac{\tr P}{n}\cdot\bbone_{n}$ (with the unit matrix $\bbone_{n}\in\R^{n\times n}$). For $\varphi\in\hold_{c}^{\infty}(\R^{3})$, we put $P_{\varphi}=\varphi\cdot\bbone_3$, so that 
\begin{align*}
\Curl P_{\varphi} = \begin{pmatrix} 0 & \partial_{3}\varphi & -\partial_{2}\varphi \\ -\partial_{3}\varphi & 0 & \partial_{1}\varphi \\ \partial_{2}\varphi & -\partial_{1}\varphi & 0 \end{pmatrix}.
\end{align*}
We consider $(\Acal,\B)$ given by 
\begin{itemize}
\item $\Acal=\dev$, $\B=\sym\Curl$. Then $\dev P_{\varphi}=0$, $\sym\Curl P_{\varphi}=0$ but $P_{\varphi}\not\equiv 0$, so that~\eqref{eq:KMS1} or \eqref{eq:KMSMain1A} yield an immediate contradiction. 
\item $\Acal=\dev\sym $ and $\B=\sym\Curl$ or $\B=\dev\sym\Curl$. The requisite contradictions directly follow from the previous item. 
\end{itemize}
\end{example}  

\begin{example}\label{ex:algebraicobst2}
 Consider $P_{\psi}=\Anti(\nabla \psi)$ for $\psi\in\hold_{c}^{\infty}(\R^{3})$, where $\Anti\colon\R^{3}\to\mathfrak{so}(3)$ is the canonical identification map (see the appendix for more detail). Using \textsc{Nye}'s formula (see \eqref{eq:nye1} in the appendix)
 \begin{equation}
  \Curl P_{\psi} = \Delta \psi\cdot\bbone_3 -\D\nabla\psi \in\mathrm{Sym}(3)
 \end{equation}
 and for $(\Acal,\B)=(\sym,\skew\Curl)$ or $(\Acal,\B)=(\dev\sym,\skew\Curl)$ we would obtain a contradiction to the validity of \eqref{eq:KMS1} or \eqref{eq:KMSMain1A}.
\end{example}

These examples show that the operator $\B$ might turn out non-elliptic on elements for which the parts $\Acal$ vanish. As such, for inequalities \eqref{eq:KMSMain1A} to hold, $\B$ must be elliptic on precisely such elements. In Theorem \ref{thm:KMS-A}, to be stated and proved in Section \ref{sec:KMSfirstkind}, we will establish that this is also \emph{sufficient}. Especially, we recover \emph{all} such KMS inequalities that are known so far for $1<p<\infty$ and generalise them in an \emph{optimal way}.

The preceding inequalities require zero boundary values in a suitable sense, as do the Korn inequalities of the first kind (cf. \eqref{eq:KornA}). Therefore, our second focus is on generalised KMS inequalities of the second kind, thereby dealing with the situation on \emph{domains}. Working from \eqref{eq:KornB} or $\eqref{eq:KMSbasic}_{2}$, respectively, we let $\Omega\subset\R^{n}$ be an open, bounded and connected Lipschitz domain (e.g. with Lipschitz boundary $\partial\Omega$), $\mathcal{K}$ be a fixed subspace of the $V$-valued polynomials on $\R^{n}$ and consider, for a given part map $\Acal$ and differential operators $\B$ as introduced above, validity of the inequality 
\begin{align}\tag{KMS2}\label{eq:KMS2}
\inf_{\Pi\in\mathcal{K}}\norm{P-\Pi}_{\lebe^q (\Omega)} &\leq c \Big(\norm{\Acal[P]}_{\lebe^q (\Omega)}+\norm{\B P}_{\lebe^{p}(\Omega)}\Big)
\end{align}
for all $P\in\hold^{\infty}(\overline{\Omega};\R^{n\times n})$. Here, $1<p<\infty$, which in turn determines the possible range of $q$ depending on the order $k$ of $\B$ and the underlying space dimension $n$. Different from the entire space case, we will then find that it does \emph{not suffice} anymore for $\B$ to behave elliptically on maps $P$ for which $\Acal[P]$ vanishes identically. Indeed, as will be made precise in our second main result, Theorem \ref{thm:KMS-B} below, such inequalities require to rule out a certain bad boundary behaviour. As we shall establish in \ref{sec:KMSsecondkind}, this is equivalent to $\B$ behaving like a so-called $\C$-elliptic operator along maps for which $\Acal[P]$ vanishes; we refer the reader to Section \ref{sec:notions} for this terminology. 

\begin{remark}[$p=1$]
In contrast to the precursor \cite{GLN} of the present paper, which focussed on KMS-inequalities with $\B=\Curl$ but particularly addressed the case $p=1$ for the sharp class of part maps $\Acal$, we here allow for joint maximal flexibility for both the part maps $\Acal$ and differential operators $\B$, however,  concentrate on the case $1<p<\infty$. For the integrability regime $p=1$, the techniques displayed in this paper allow to obtain inequalities involving the homogeneous Hardy space $\mathcal{H}^{1}$ (cf. Corollary \ref{cor:Hardy}). Yet, this does not suffice to obtain inequalities with the mere $\lebe^{1}$-norms of $\B P$, see Remark \ref{rem:p=1Hardy}, and we shall pursue this systematically in future work. 
\end{remark}

\begin{remark}[Further generalisations]
It is well-known that Korn inequalities persist in more general domains like John domains, see e.g. \cite{Duran1,DieningGmeineder,JiangKauranen}. However, it is worth noting that a complete characterization of the connection of the validity of Korn's inequalities with the topological requirements on the domains still seems to be missing. Indeed, there are domains which support a Korn's inequality but are not John, see the examples in \cite{Duran2,JiangKauranen}. Since our argument for Korn type inequalities of the second order is based on the Ne\v{c}as-Lions lemma \ref{lem:LionsNecas_k} below we will confine ourselves to Lipschitz domains. Finally, we wish to mention that the discussion of nonlocal Korn inequalities \cite{Mengesha2,ScottMengesha2} or \emph{fractional} Korn inequalities \cite{HarutyunyanMikayelyan,Mengesha,Rutkowski,ScottMengesha} or \emph{fractional} Korn-Maxwell-Sobolev inequalities \cite{GmSp} with the special choice $\B=\Curl$ will be pursued in its generalized form elsewhere; in the present paper we focus only on the Triebel-Lizorkin scale, see Theorem \ref{thm:Triebel} below.
\end{remark}

\subsection{Structure of the paper}
Away from this introduction, the paper is organised as follows: After fixing notation, Section \ref{sec:models} discusses models for which the results of the present paper provide a unifying perspective on the underlying inequalities. In Section  \ref{sec:main} we then state and prove the generalised KMS inequalities alluded to above. After discussing selected function space implications in Section \ref{sec:functionspaces}, we discuss in Section \ref{sec:examples} how the results in the present paper let us finally retrieve and extend previously known KMS-type inequalities. 

\section*{Notation} Our notation is fairly standard, but we wish to comment on certain aspects. Throughout, $\bbone_{n}$ denotes the $(n\times n)$-unit matrix and $V,W$ finite dimensional real inner product spaces. Given $x_{0}\in\R^{n}$ and $r>0$, the open ball of radius $r$ centered at $x_{0}$ is denoted $\ball_{r}(x_{0})\coloneqq \{x\in\R^{n}\mid |x-x_{0}|<r\}$. The $n$-dimensional Lebesgue and the $(n-1)$-dimensional Hausdorff measure are denoted $\mathscr{L}^{n}$ and $\mathscr{H}^{n-1}$, respectively, and we set $\omega_{n-1}\coloneqq \mathscr{H}^{n-1}(\partial\ball_{1}(0))$. For a bounded measurable set $A\subset\R^{n}$ with $\mathscr{L}^{n}(A)>0$ and $u\in\lebe_{\locc}^{1}(\R^{n};V)$, we write 
\begin{align*}
\dashint_{A}u\,\dif x \coloneqq  \frac{1}{\mathscr{L}^{n}(A)}\int_{A}u\,\dif x. 
\end{align*}
We moreover use the dot notation to denote homogeneous function spaces; e.g., we write ${\dot{\sobo}}{^{k,p}}(\R^{n};V)$ to indicate the homogeneous Sobolev space of order $(k,p)$ of $V$-valued maps, so the closure of $\hold_{c}^{\infty}(\R^{n};V)$ for the $k$-th order $\lebe^{p}$-gradient norm $\|\D^{k}\boldsymbol\cdot\|_{\lebe^{p}(\R^{n})}$. This notation also carries over to other space scales such as e.g. homogeneous Triebel-Lizorkin spaces, where we consequently stick to the conventions of \textsc{Triebel} \cite[Chpt. 5]{Triebel}.

Inner products on a linear space $V$ will be denoted $\langle\boldsymbol\cdot,\boldsymbol\cdot\rangle_{V}$, and if it is clear from the context, we shall omit the subscript. For $m\in\N$ and a finite dimensional inner product space $V$, the symmetric, $m$-multilinear $V$-valued maps on $\R^{n}$ are denoted $\SLin_{m}(\R^{n};V)$. For completeness, let us recall that the symmetric or skew-symmetric $(n\times n)$-matrices are denoted $\mathrm{Sym}(n)$ or $\mathfrak{so}(n)$, respectively. We shall occasionally also apply some notation on underlying identification maps and algebraic identities that we, for the convenience of the reader, have concisely gathered in the Appendix, Section \ref{sec:appendix}. Lastly, we write $c>0$ for a constant that might change from line to line, and shall only be specified in case its precise value is required.

\section{Models, context and previously known results as special cases}\label{sec:models}
For a wealth of specific constellations $(\Acal,\B,p,q)$, the generalised KMS-inequalities \eqref{eq:KMS1} and \eqref{eq:KMS2}, to be addressed in  Theorems \ref{thm:KMS-A} and \ref{thm:KMS-B} below, provide inequalities that play instrumental roles in the well-posedness theory for a multitude of mathematical models. We now proceed to contextualise the unifying approach of the present paper with previous contributions, and how it extends and recovers key coercive inequalities in several recently studied models. 
\subsection{Contextualisation}
KMS-inequalities and related inequalities have been studied in particular situations; see \cite{BauerNeffPaulyStarke,LMN,LN3-tracefree,LN4-tracefree,LN2,LN1,NeffPlastic,NeffPaulyWitsch} for a non-exhaustive list.  Slightly more systematically, in the specific case where $1<p<\infty$ and $\B=\Curl$ in $n=3$ dimensions, \cite{GmSp} establishes the equivalence
\begin{align}\label{eq:GmSp}
\eqref{eq:KMSMain1A} \Longleftrightarrow\;\Acal\;\text{induces an elliptic operator via $\A u\coloneqq\Acal[\D u]$}. 
\end{align}
The latter condition can be expressed algebraically via 
\begin{align}\label{eq:GmSp1}
\bigcup_{\xi\in \R^{3}\setminus\{0\}}\ker(\Acal[\boldsymbol\cdot\otimes\xi])=\{0\} \quad(\Leftrightarrow (\forall\xi\neq 0\colon\; \Acal[v\otimes\xi]=0)\Rightarrow v= 0).
\end{align}
The equivalence \eqref{eq:GmSp} extends to arbitrary dimensions $n\geq 2$, cf. \cite{GLN}. In these contributions, the underlying KMS-inequalities are approached by performing an \emph{analytic split} embodied by Helmholtz decompositions of generic maps $P$. Decomposing a map $P$ into its $\div$- and $\curl$-free parts then allows to use the fractional integration \cite[\S 3]{AdamsHedberg} or Sobolev embedding theorem on the former and Calder\'{o}n-Zygmund theory on the latter. In particular, this split is dictated by the \emph{differential operator} $\B =\Curl$ which is then viewed as the central object, hence the name \emph{analytic split}. 

Here we introduce a different approach and perform an \emph{algebraic split}; see the proof of Theorem \ref{thm:KMS-A} below. Namely, it is now the part map $\Acal$ that motivates the pointwise decomposition $P=\Pi_{\ker\Acal}[P] + \Pi_{(\ker\Acal)^{\bot}}[P]$. Contrary to $\Pi_{(\ker\Acal)^{\bot}}[P]$, which is a priori controllable by $\Acal[P]$, it is now the part $\Pi_{\ker\Acal}[P]$ that requires elliptic estimates and Sobolev's embedding. In contrast to the analytic one, this split is not dictated by the differential operator $\B =\Curl$ but the purely algebraic \emph{part map} which is then viewed as central. 
 
Let us note, though, that while the algebraic split approach in the present paper yields the optimal result for the regime $1<p<\infty$, we do not see how it allows to obtain results in the borderline case $p=1$ which has been resolved recently \cite{GLN} by the authors in the case of $\B=\Curl$ employing the analytic split. 

In the following, we now proceed to discuss three illustrative models from continuum or fluid mechanics where generalised KMS inequalities as described in Section \ref{sec:genincompatibiltiies} play a pivotal role. As a key point, up to now these inequalities required single approaches, requiring different intricate algebraic identities or analytic estimates each, whereas they now appear as special cases of the results of the present paper. Throughout, let now $\Omega\subset\R^{3}$ be open and bounded.
\subsection{Gradient plasticity models} \label{sec:gradientplasticity}
The modelling of plastic deformations in the geometrically linear framework is based on the additive decomposition of the displacement gradient\linebreak $\D u=e + P$ into the non-symmetric elastic distortion $e$ and the plastic distortion $P$. With the presence of plastic deformations the (free) elastic energy describing the elastic response of the material is of the form
\begin{equation}\label{eq:energydecomposition}
\mathscr{F}[u,P;\Omega] \coloneqq  \int_{\Omega}W_{\mathrm{elastic}}(\sym(\D u-P))+ W_{\mathrm{plastic}}(\sym P, \Curl P)\,\dif x,
\end{equation}
whereby dislocations are modelled by the control of $\Curl P$, which enters the plastic energy, see  \cite{ContiOrtiz,EbobisseHacklNeff,GLP,Gurtin,LMN,MulScaZep,Muench,RoegerSchweizer} for an incomplete list. For $X\in\R^{3\times 3}$, we consider the orthogonal decomposition 
\begin{align}\label{eq:decomposition}
X= \devsym X + \tfrac{1}{3}\tr(X)\bbone_{3} + \skew X \in (\mathfrak{sl}(3)\cap\mathrm{Sym}(3))\oplus \R\bbone_{3} \oplus \mathfrak{so}(3),
\end{align}
where $\mathfrak{sl}(3)=\{X\in\R^{3\times3}\mid \tr X = 0\}$. Applying this pointwisely to $\D u$ for a displacement $u\colon\Omega\to\R^{3}$, \eqref{eq:decomposition} decomposes $\D u$ into a shear part $\devsym \D u$ capturing the shape change of the material, whereas $\tfrac{1}{3}\tr(\D u)$ reflects purely volumetric infinitesimal changes and $\skew \D u$ describes rotations of the material; the latter two constituents hence do not give rise to shape change of the material. The additive decomposition \eqref{eq:decomposition} is also meaningful for $X=\Curl P$, cf. \textsc{Lazar} \cite{Lazar1,Lazar2} and \textsc{Neff} et al. \cite{Nerf}. Specifically, the diagonal entries of $\Curl P$ display screw dislocations and the off-diagonal entries describe edge dislocations. The single constituents in \eqref{eq:decomposition} then describe, in this order, symmetric edge dislocations combined with single screw edge dislocations, screw dislocations and skew-symmetric edge dislocations. In a phenomenologically simple model, the plastic energy density can account for this particular split via an additive description 
\begin{align*}
W_{\mathrm{plastic}}(\sym P, \Curl P)= W_{\mathrm{plastic}}^{(1)}(\sym P)+W_{\mathrm{plastic}}^{(2)}(\Curl P),
\end{align*}
where for suitable $\mu_{1},\mu_{2},\mu_{3}\geq 0$ and $q,p_{1},p_{2},p_{3}>1$
\begin{align}
&W_{\mathrm{plastic}}^{(1)}(\sym P) = |\sym P|^{q-2}\langle\mathbb{C}_{\mathrm{hard}}\,\sym P,\sym P\rangle ,\label{eq:decomposeCurl}\\
&W_{\mathrm{plastic}}^{(2)}(\Curl P) =  \mu_{1}\,|\dev\sym\Curl P|^{p_{1}} + \mu_{2}\,|\tr \Curl P|^{p_{2}} + \mu_{3}\,|\skew\Curl P|^{p_{3}},  \notag
\end{align}
whereas the elastic energy density in \eqref{eq:energydecomposition} is given by the quadratic form 
\begin{align}\label{eq:elasticpart}
W_{\mathrm{elastic}}(\sym(\D u -P))=\langle\mathbb{C}_{\mathrm{elastic}}\sym(\D u -P),\sym(\D u -P)\rangle.
\end{align}
In \eqref{eq:decomposeCurl} or \eqref{eq:elasticpart}, $\mathbb{C}_{\mathrm{elastic}},\mathbb{C}_{\mathrm{hard}}\in\R^{(3\times 3)\times (3\times 3)}$ are positive definite elasticity  or plastic hardening tensors; the reader might notice that for $q=2$,  $\eqref{eq:decomposeCurl}_{1}$ reduces to the usual \textsc{Prager} backstress term \cite{EboNeff}. The choice of parameters $\mu_{1},\mu_{2},\mu_{3}$ leads to different models for certain material aspects, and admitting exponents $p_{1},p_{2},p_{3}\neq 2$ is natural (see e.g.  \textsc{Wulfinghoff} et al. \cite{Wulfinghoff} or \textsc{Conti \& Ortiz} \cite{ContiOrtiz} for a similar appearance of non-quadratic $\curl$-terms). Coercivity of the functionals (and hereafter existence of minimizers) then relies on suitable KMS- inequalities of type \eqref{eq:KMSbasic}. Specifically, as discussed by the second named authors and \textsc{M\"{u}ller} \cite{LMN}, if $\mu_{2},\mu_{3}=0$, then the specific KMS-type inequality 
\begin{align}\label{eq:modeldevsyminequality}
\inf_{\Pi\in\mathcal{K}}\|P-\Pi\|_{\lebe^{q}(\Omega)}\leq c\Big(\|\sym P\|_{\lebe^{q}(\Omega)}+\|\dev\sym \Curl P\|_{\lebe^{p}(\Omega)}\Big)
\end{align}
for $P\in\hold^{\infty}(\overline{\Omega};\R^{3\times 3})$  
is key to the coercivity of the associated energy functionals. On the other hand, if $\mu_{1}=0$ and so only screw dislocations and edge dislocations are reflected in the model, coercivity for the forced functional (for some suitably integrable force $f\colon\Omega\to\R^{3}$)
\begin{align*}
\mathscr{F}_{f}[u,P;\Omega]\coloneqq  \mathscr{F}[u,P;\Omega] - \int_{\Omega}\langle f,u\rangle\,\dif x 
\end{align*}
subject to zero Dirichlet conditions necessitates a KMS-inequality that replaces the part map $\devsym$ in \eqref{eq:modeldevsyminequality} by the part map $\R^{3\times 3}\ni X \mapsto \skew X + \tr(X)\bbone_{3}$. As a consequence of Theorem \ref{thm:KMS-A} (see Section \ref{sec:symparts} and Figure \ref{fig:maintable}), the requisite coercive KMS-inequalities are available for zero Dirichlet data and suitable choices of exponents $p_{2},p_{3}$, but not immediately for non-zero Dirichlet data on $P$. More generally, recalling that \eqref{eq:decomposition} provides a direct sum decomposition of $\R^{3\times 3}$, other material aspects can be captured by admitting plastic potentials $\abs{\Acal[P]}^q+\abs{\Bcal[\Curl P]}^p$ with the choices of part maps $\Acal,\Bcal$:
\begin{align}\label{eq:partmapsexamples}
 (\Acal,\Bcal)\in\Big\{&(\dev,\mathrm{Id}),(\sym,\mathrm{Id}),(\dev\sym,\mathrm{Id}),(\dev,\dev),(\sym,\dev),\\
 & (\dev\sym,\dev),(\sym,\sym),(\sym,\dev\sym), (\dev,\skew+\tr),(\dev,\tr)\Big\},\notag
\end{align}
and we refer the reader to Section \ref{sec:examples} for more background on such constellations. 
\subsection{The relaxed micromorphic model}
Another key application of KMS-type inequalities is given by (extended continuum) micromorphic models. Here, a key modelling assumption is the attachment of a microstructure to each single point of the material, with this microstructure in turn deforming elastically. In analogy with the discussion in Section \ref{sec:gradientplasticity}, in the relaxed micromorphic model the plastic distortion is replaced by the micro distortion \cite{Neffunifying,Nerf}. Specifically, under suitable side constraints on the displacement field $u\colon\Omega\to\R^{3}$ and the (in general non-symmetric) micro distortion $P\colon\Omega\to\R^{3\times 3}$, one then aims to find minimizers of functionals 
\begin{align}\label{eq:micromorphic}
\mathscr{F}[u,P;\Omega] & \coloneqq \int_{\Omega}\underbrace{W_{\mathrm{elastic}}(\sym(\D u - P))}_{\eqqcolon\mathrm{I}} + \underbrace{W_{\mathrm{micro}}(\sym P)}_{\eqqcolon\mathrm{II}}\notag\\ & \hspace*{-1em}+\underbrace{W_{\mathrm{coupling}}(\skew(\D u-P))}_{\eqqcolon\mathrm{III}}+\underbrace{W_{\mathrm{curv}}(\Curl P)}_{\eqqcolon\mathrm{IV}} + \langle f,u\rangle\,\dif x .
\end{align}
In \eqref{eq:micromorphic}, term $\mathrm{I}$ is as in \eqref{eq:elasticpart}, whereas the other parts $\mathrm{II}$--$\mathrm{IV}$ are of the form $\langle\mathbb{C}z,z\rangle$. As to term $\mathrm{II}$, $\mathbb{C}=\mathbb{C}_{\mathrm{micro}}$ for a suitable elasticity tensor $\mathbb{C}_{\mathrm{micro}}\colon\mathrm{Sym}(3)\to\mathrm{Sym}(3)$, whereas $\mathbb{C}=\mathbb{C}_{\mathrm{c}}$ for a rotational coupling tensor on $\mathfrak{so}(3)$. Term $\mathrm{IV}$ then is a curvature energy term, and a physically meaningful reduction of the complexity of the tensor $\mathbb{C}=\mathbb{C}_{\mathrm{curv}}$ then yields equality of $\mathrm{IV}$ with the right-hand side of  $\eqref{eq:decomposeCurl}_{2}$ with $p_{1}=p_{2}=p_{3}=2$, see \cite{LMN,Neffunifying,Nerf}. If the rotational coupling $W_{\mathrm{coupling}}$ is absent it is then clear that, similarly as in Section \ref{sec:gradientplasticity}, generalised KMS-type inequalities are instrumental for the existence of minimizers, where different part maps $\Acal,\mathscr{B}$ as in \eqref{eq:partmapsexamples} then  model different material aspects.
\subsection{Pseudostress-velocity formulation for stationary Stokes} 
The stationary Stokes system for incompressible fluids can be recast in a pseudostress formulation. To be more precise, let $\Omega\subset\R^{3}$ be open and bounded with Lipschitz boundary. Subject to $f\colon\Omega\to\R^{3}$ and suitable boundary conditions, one aims to find a velocity function $u\colon\Omega\to\R^{3}$, a corresponding pressure function $\mathbf{p}\colon\Omega\to\R$ and, \emph{in addition}, a stress function $\sigma\colon\Omega\to\R^{3\times 3}$ such that the following \emph{first order} system is satisfied:
\begin{align}\label{eq:pseudostokes}
\begin{cases} 
\sigma - \mu \sym\D u + \mathbf{p} \bbone = 0&\text{in}\;\Omega,\\
\Div \,\sigma = f&\text{in}\;\Omega, \\
\div\,u = 0&\text{in}\;\Omega, \\ 
u= 0&\text{on}\;\Gamma_{\nu}\subset\partial\Omega,\\
\sigma\times\nu = 0&\text{on}\;\Gamma_{\tau}\subset\partial\Omega, 
\end{cases} 
\end{align}
where $\Gamma_{\nu},\Gamma_{\tau}$ are relatively open subsets of $\partial\Omega$ and $\eqref{eq:pseudostokes}_{5}$ is understood in the row-wise sense. Taking the row-wise divergence of both sides of $\eqref{eq:pseudostokes}_{1}$, one recovers the usual stationary Stokes system for incompressible fluids. Weak solutions, in a suitable yet canonical sense, for \eqref{eq:pseudostokes} can be obtained variationally as the minimizer of the functional 
\begin{align*}
\mathscr{F}[\sigma,u;\Omega]\coloneqq  \int_{\Omega}|\dev\,\sigma - \mu \sym \D u|^{2}+|\Div \sigma - f|^{2}\dif x\quad\text{over}\quad\begin{cases} 
\sigma\in\mathrm{H}(\Div ;\Gamma_{\nu},\Omega),\\
u\in\mathrm{H}(\mathrm{Grad};\Gamma_{\tau},\Omega),
\end{cases}
\end{align*}
and the pressure function in \eqref{eq:pseudostokes} is then re-introduced via $\mathbf{p}=-\frac{1}{3}\mathrm{tr}(\sigma)$. The underlying spaces $\mathrm{H}(\Div ;\Gamma_{\nu},\Omega)$ and $\mathrm{H}(\mathrm{Grad};\Gamma_{\tau},\Omega)$ are particular instances of the spaces ${\sobo}{_{0,\Gamma}^{\Acal,q,\B ,p}}(\Omega)$ introduced in Section \ref{sec:functionspaces} below, see \eqref{eq:Hspaces} below. The formulation \eqref{eq:pseudostokes} originally appeared in \textsc{Cai} et al. \cite{Cai} and has been studied from numerical perspectives in \cite{Cai1,Gatica}. As discussed at length in  \textsc{Bauer} et al. \cite{BauerNeffPaulyStarke}, the coercivity of the functional $\mathscr{F}$ and hence the existence of minimizers in turn is based on the KMS-type inequality 
\begin{align}\label{eq:KMSmodel2}
\|P\|_{\lebe^{2}(\Omega)}\leq c\Big(\|\dev P\|_{\lebe^{2}(\Omega)}+\|\Div P\|_{\lebe^{2}(\Omega)} \Big),\qquad P\in\mathrm{H}(\Div ;\Gamma_{\nu},\Omega), 
\end{align}
which we recover as a special case of Proposition \ref{prop:partialbdryvalues} below. Inequalities \eqref{eq:KMSmodel2} with non-quadratic integrabilities also arise naturally in the realm of Non-Newtonian fluids, where the viscosity of the fluid depends on the velocity. Imposing \textsc{Carreau}'s law, the term $\mu\sym\D u$ then is replaced by $(1+|\!\sym \D u|^{2})^{\frac{q-2}{2}}\sym \D u$ and then corresponds to shear thickening ($q>2$) or shear thinning fluids ($q<2$); see \cite{BauerNeffPaulyStarke,Ervin} and \cite{Muenzenmaier1,Muenzenmaier2} for recent numerical results.
\subsection{Miscellaneous other models and applications}\label{sec:misca}
Variations of KMS-inequalities also enter in different applications, and we here gather some of such results. First, a variant of inequality \eqref{eq:KMSmodel2} had been studied by \textsc{Arnold} et al. \cite{Arnold} in view of developing a mixed higher order finite element method for dealing with planar elasticity and the corresponding error analysis; in this context, see also \textsc{Boffi} et al. \cite{Brezzi} and \textsc{Carstensen} et al. \cite{Carstensen1,Carstensen2}. Different from \eqref{eq:KMSmodel2}, in this situation it is natural to not require maps to vanish on parts of the boundary but to satisfy a certain normalisation condition. Here, such inequalities arise as special cases of Corollary \ref{cor:KMS-Bn} below; also see Remark \ref{rem:KornNormalise1} for their link to the aforementioned contributions. Moreover, \textsc{Botti} et al. \cite{BPS} apply KMS inequalities of the second type to establish an adequate discrete version of a Poincaré-Korn type inequality, which itself is used to justify a reduction of mesh face degrees of freedom through serendipity techniques. It is worth mentioning that their presented Korn type inequalities already follow from the $\C$-ellipticity of the appearing differential operators on the right-hand side of the estimates in \cite[Prop. 25]{BPS}, see the definition in the corresponding equation \cite[Eq. (2.1)]{BPS} and, in turn, \cite{DieningGmeineder,Kalamajska}.

Lastly, in incompatible elasticity, the symmetric strain field $\varepsilon$ instead of the displacement field is viewed as the primary quantity (see e.g.  \textsc{Ciarlet} \cite{Cia1,Cia2} or the non-variational models from \textsc{Amstutz \& Van Goethem} \cite{Amstutz1,Amstutz2}) and is not a priori assumed to be a symmetric gradient of a displacement field. In terms of the complex $\eqref{eq:complexes}_{2}$ff., this is expressed via $\inc\varepsilon\neq 0$, in which case $\varepsilon\neq \sym\D u$ for any displacement field $u$. The use of inc-based gradient plasticity models is based on an additional invariance condition beyond isotropy \cite{EbobisseNeff,Steigmann}. The associated kinematic problems then naturally lead to Sobolev-type spaces defined in terms of $\inc$, which then are a special case of the general spaces introduced in Section \ref{sec:functionspaces}. However, the corresponding \emph{sharp} KMS inequalities in Theorems \ref{thm:KMS-A}, \ref{thm:KMS-B}, Corollary \ref{cor:KMS-Bn}, Proposition \ref{prop:partialbdryvalues} and the results from Section \ref{sec:incexamples} in turn show that $\inc$ is often too weak to give suitable control over lower order quantities which has only been conjectured so far. This also indicates that including lower order $\Curl$-terms might be necessary to obtain well-posedness in certain fourth order gauge invariant variational plasticity models for polycrystals, cf. \textsc{Ebobisse} et al. \cite{EbobisseNeff}. 

\section{Main results}\label{sec:main}
We now proceed to state and prove our main results, Theorem \ref{thm:KMS-A} and \ref{thm:KMS-B}, in Sections \ref{sec:KMSfirstkind} and \ref{sec:KMSsecondkind} below. Beforehand, we start by introducing the underlying terminology. Here we tacitly adopt the conventions gathered in Section \ref{sec:genincompatibiltiies}. 
\subsection{Algebraically compatible ellipticities}\label{sec:notions}
In order to introduce the notions that are necessary and sufficient for the generalised KMS inequalities \eqref{eq:KMS1} and \eqref{eq:KMS2}, we start by recalling some general terminology for vectorial differential operators. Thus, let $\A$ be an $l$-homogeneous, linear, constant coefficient differential operator on $\R^n$ between $V$ and $W$ (two finite-dimensional real inner product spaces), meaning that
\begin{equation}\label{eq:operator}
 \A u \coloneqq\sum_{\abs{\alpha}=l}\A_{\alpha}\partial^\alpha u,\qquad u\colon\R^{n}\to V,
\end{equation}
for linear maps $\A_{\alpha}:V\to W$ and multi-indices $\alpha\in\N_{0}^n$ with $|\alpha|=l$. With this operator we associate the \textit{symbol map}
\begin{equation}
 \A[\xi]:V\to W, \quad\quad \A[\xi]\boldsymbol{v}\coloneqq \sum_{\abs{\alpha}=l}\xi^\alpha\A_{\alpha}\boldsymbol{v}, \quad \xi\in\R^n, \boldsymbol{v}\in V, 
\end{equation}
where $\xi^{\alpha}=\xi_{1}^{\alpha_{1}}\cdots\xi_{n}^{\alpha_{n}}$ for $\alpha=(\alpha_{1},\ldots,\alpha_{n})\in\N_{0}^{n}$. An operator of the form \eqref{eq:operator} is then called \textit{elliptic} (in the sense of \textsc{H\"{o}rmander} or \textsc{Spencer} \cite{Hoermander,Spencer}) or \emph{$\R$-elliptic} if
\begin{equation}
 \ker_{\R}(\A[\xi])=\{0\} \quad \text{for all $\xi\in\R^n\backslash\{0\}$,}
\end{equation}
and is said to be \textit{$\C$-elliptic} if
\begin{equation}\label{eq:Cellipticdef}
 \ker_{\C}(\A[\xi])=\{0\} \quad \text{for all $\xi\in\C^n\backslash\{0\}$.}
\end{equation}
Specifically, as $\A_{\alpha}$ is a map between real vector spaces, let us point out that \eqref{eq:Cellipticdef} has the interpretation that, for any $\xi\in\mathbb{C}^{n}\setminus\{0\}$ and all $\boldsymbol{v}^{1},\boldsymbol{v}^{2}\in V$, 
\begin{align}\label{eq:Cellinterpretation}
\sum_{\abs{\alpha}=l}\xi^\alpha\A_{\alpha}\boldsymbol{v}^{1}+\imag\sum_{\abs{\alpha}=l}\xi^\alpha\A_{\alpha}\boldsymbol{v}^{2}= 0 \Longrightarrow \boldsymbol{v}^{1},\boldsymbol{v}^{2}=0. 
\end{align}
The following lemma is essentially due to \textsc{Smith} \cite{Smith}; also see \textsc{Ka\l{}amajska} \cite{Kalamajska}. 
\begin{lemma}\label{lem:Cellipt}
 Let $\A$ be an operator of the above form \eqref{eq:operator}. The following are equivalent:
 \begin{enumerate}
  \item $\A$ is $\C$-elliptic.
  \item There exists another homogeneous, linear, constant coefficient differential operator $\mathbb{L}$ on $\R^n$ and a number $d\in\N$ such that $\D^d=\mathbb{L}\circ\A$.\label{item:expression}
  \item For any open and connected set $\Omega\subset\R^n$ we have \label{item:kernel}
  $$\dim\{P\in\mathscr{D}'(\Omega;V)|\;\A P=0\}<\infty.$$ 
 \end{enumerate}
Moreover, the nullspace in \ref{item:kernel} consists of $V$-valued polynomials and is independent of $\Omega$.
\end{lemma}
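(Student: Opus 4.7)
The strategy is the standard circular implication (a) $\Rightarrow$ (b) $\Rightarrow$ (c) $\Rightarrow$ (a), with the main substance concentrated in (a) $\Rightarrow$ (b), which is an application of Hilbert's Nullstellensatz. Fix bases of $V$ and $W$ and record the symbol in matrix form: for $\xi\in\C^{n}$, $\A[\xi]$ becomes a matrix of homogeneous polynomials of degree $l$ in $\xi$. $\C$-ellipticity says exactly that this matrix has full column rank for every $\xi\in\C^{n}\setminus\{0\}$, equivalently that the ideal $I\subseteq\C[\xi]$ generated by its maximal minors cuts out the single point $\{0\}\subset\C^{n}$.

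For (a) $\Rightarrow$ (b), consider the polynomial $P(\xi)\coloneqq \det(\A[\xi]^{\T}\A[\xi])\in\C[\xi]$. $\C$-ellipticity forces $P$ to vanish only at $\xi=0$, so by Hilbert's Nullstellensatz the radical of $(P)$ equals the maximal ideal $(\xi_{1},\ldots,\xi_{n})$, and therefore there is a single $d\in\N$ such that every monomial $\xi^{\alpha}$ with $\abs{\alpha}=d$ lies in $(P)$. Cramer's rule gives $\mathrm{adj}(\A[\xi]^{\T}\A[\xi])\A[\xi]^{\T}\A[\xi]=P(\xi)\,\mathrm{Id}_{V}$, so combining the factorisation $\xi^{\alpha}=q_{\alpha}(\xi)P(\xi)$ with this identity expresses $\xi^{\alpha}\,\mathrm{Id}_{V}$ as a polynomial in $\xi$ composed with $\A[\xi]$. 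Passing from symbols back to differential operators via the usual symbolic calculus yields a homogeneous linear constant coefficient operator $\mathbb{L}$ of order $d-l$ with $\D^{d}=\mathbb{L}\circ\A$.

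The implication (b) $\Rightarrow$ (c) is immediate: if $P\in\mathscr{D}'(\Omega;V)$ satisfies $\A P=0$, then $\D^{d}P=\mathbb{L}(\A P)=0$ in the sense of distributions, which by Weyl's lemma for the Laplacian (or componentwise by the classical fact that distributions with vanishing derivatives of order $d$ on a connected open set are polynomials of degree $<d$) forces $P$ to be a $V$-valued polynomial of degree strictly less than $d$. This also delivers the concluding statement of the lemma about the structure and $\Omega$-independence of the nullspace. For (c) $\Rightarrow$ (a), I argue contrapositively: assume $\A[\xi_{0}]v_{0}=0$ for some $\xi_{0}\in\C^{n}\setminus\{0\}$ and $v_{0}\in V_{\C}\setminus\{0\}$, and consider the exponential-polynomial solutions $P_{\lambda}(x)\coloneqq e^{\imag \lambda\,\xi_{0}\cdot x}\,v_{0}$ for $\lambda\in\R$. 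Since $\A$ has real coefficients, $\A[\imag\lambda\xi_{0}]v_{0}=(\imag\lambda)^{l}\A[\xi_{0}]v_{0}=0$, so both the real and imaginary parts of $P_{\lambda}$ lie in $\ker_{\mathscr{D}'(\Omega;V)}\A$; varying $\lambda$ produces an infinite-dimensional $\R$-linear family in the kernel, contradicting (c).

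The only nontrivial step is (a) $\Rightarrow$ (b); the main obstacle is the passage from pointwise injectivity of the symbol to a global polynomial left inverse modulo high-order monomials, which is exactly what the Nullstellensatz provides. Everything else is routine. I will therefore cite \cite{Smith,Kalamajska} for the algebraic core and give the distributional arguments explicitly, as their formulation in the present framework of part maps and finite dimensional inner product spaces is what will be used throughout Section~\ref{sec:KMSfirstkind} and Section~\ref{sec:KMSsecondkind}.
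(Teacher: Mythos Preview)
The paper does not prove this lemma; it is stated with attribution to \textsc{Smith} \cite{Smith} and \textsc{Ka\l{}amajska} \cite{Kalamajska}. Your outline follows the classical route, and the arguments for (b) $\Rightarrow$ (c) and (c) $\Rightarrow$ (a) are fine. However, your implementation of (a) $\Rightarrow$ (b) contains a genuine error.

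You assert that $\C$-ellipticity forces $P(\xi)\coloneqq\det\big(\A[\xi]^{\T}\A[\xi]\big)$ to vanish only at $\xi=0$ in $\C^{n}$. This is false: over $\C$ the \emph{transpose} (as opposed to the conjugate transpose) does not detect injectivity. By Cauchy--Binet, $P(\xi)$ equals the sum of the \emph{squares} of the maximal minors of $\A[\xi]$, and a sum of squares of polynomials can vanish at complex points where none of the individual minors vanish. Concretely, take $\A=\nabla$ on scalar functions over $\R^{2}$: the symbol $\A[\xi]=(\xi_{1},\xi_{2})^{\T}$ is injective for every $\xi\in\C^{2}\setminus\{0\}$, yet $P(\xi)=\xi_{1}^{2}+\xi_{2}^{2}$ vanishes at $\xi=(1,\imag)$. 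Hence the Nullstellensatz does not apply to the principal ideal $(P)$, and the adjugate identity you invoke cannot be leveraged as written.

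The repair is exactly what you allude to in your first paragraph but then abandon: work directly with the ideal generated by the $m\times m$ minors of $\A[\xi]$, where $m=\dim V$. $\C$-ellipticity says precisely that the common zero locus of these minors in $\C^{n}$ is $\{0\}$, so the Nullstellensatz yields $(\xi_{1},\ldots,\xi_{n})^{d}\subset I$ for some $d$. For each choice $J$ of $m$ rows, the adjugate of the square submatrix $M_{J}[\xi]$ gives a polynomial matrix $L_{J}[\xi]$ with $L_{J}[\xi]\A[\xi]=\det(M_{J}[\xi])\,\mathrm{Id}_{V}$; combining these via the Nullstellensatz representation of each $\xi^{\alpha}$, $|\alpha|=d$, as a polynomial combination of the $\det(M_{J}[\xi])$ then produces the required homogeneous polynomial left inverse (with real coefficients, after taking real parts).
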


\subsection{Generalised KMS inequalities of the first kind}\label{sec:KMSfirstkind}
We directly start by displaying the first main result of the present paper, providing the classification of all constellations $(\Acal,\B,q)$ with $1<p<\infty$ that lead to validity of the corresponding generalised KMS-inequalities \eqref{eq:KMS1} of the first kind. For expository reasons, we start with the following basic version involving Lebesgue spaces. This result can be generalised and then appears as a special case of a more general statement on Triebel-Lizorkin spaces, cf. Theorem \ref{thm:Triebel} below. 
\begin{theoremA}[Generalised KMS-inequalities of the first kind]\label{thm:KMS-A}
Let $1<p<n$ and suppose that the part map $\Acal$ and the $k$-th order differential operator $\B$ are as in Section \ref{sec:genincompatibiltiies}. Then the following are equivalent:
 \begin{enumerate}
  \item\label{item:KMSA1} There exists a constant $c=c(\Acal,\B,p)>0$ such that the inequality
   \begin{equation} \label{eq:thm:KMS1}
      \norm{P}_{{\dot{\sobo}}{^{k-1,\frac{np}{n-p}}(\R^{n})}}\leq c\,\big(\norm{\Acal[P]}_{{\dot{\sobo}}{^{k-1,\frac{np}{n-p}}(\R^{n})}}+\norm{\B P}_{\lebe^p(\R^{n})}\big) 
   \end{equation}
   holds for all $P\in\hold^\infty_c(\R^n;V)$.
   \item\label{item:KMSA2} $\B $ is \emph{reduced elliptic} (relative to $\Acal$), meaning that 
   \begin{align}\label{eq:ellipticityreduced}
   \ker\Acal\cap\Lambda_{\B}=
\bigcup_{\xi\in\R^{n}\setminus\{0\}}\ker\Acal\cap \ker(\B[\xi]) =\{0\},
\end{align}
where $\Lambda_{\B}\coloneqq \bigcup_{\xi\in\R^{n}\setminus\{0\}}\ker(\B[\xi])$ denotes the \emph{wave cone} of $\B$.
 \end{enumerate}
\end{theoremA}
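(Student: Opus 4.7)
The plan is to establish \ref{item:KMSA2}$\Rightarrow$\ref{item:KMSA1} through the \emph{algebraic split} advertised in the introduction, while the reverse implication follows by testing \eqref{eq:thm:KMS1} against a field concentrated in a single spatial direction.

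\medskip\noindent\textbf{Sufficiency.} Set $p^{*}\coloneqq np/(n-p)$ and decompose $P\in\hold^{\infty}_{c}(\R^{n};V)$ pointwise as $P=P_{1}+P_{2}$ with $P_{1}\coloneqq\Pi_{\ker(\Acal)}[P]$ and $P_{2}\coloneqq\Pi_{\ker(\Acal)^{\bot}}[P]$, both still compactly supported. Since the restriction $\Acal|_{\ker(\Acal)^{\bot}}$ is a linear isomorphism onto $\Acal(V)$ with bounded inverse $\pi$, we have $P_{2}=\pi(\Acal[P])$ pointwise, so $\|\D^{k-1}P_{2}\|_{\lebe^{p^{*}}}\leq c\,\|\D^{k-1}\Acal[P]\|_{\lebe^{p^{*}}}$ at no cost. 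For $P_{1}$, reduced ellipticity says $\B[\xi]\colon\ker(\Acal)\to W$ is injective for every $\xi\neq 0$, so a pseudo-inverse supplies a matrix symbol $M(\xi)\in\operatorname{Hom}(W,\ker(\Acal))$ which is smooth on $\R^{n}\setminus\{0\}$, positively homogeneous of degree $-k$, and satisfies $M(\xi)\B[\xi]v=v$ for $v\in\ker(\Acal)$. On the Fourier side this gives $\widehat{P_{1}}=M\,\widehat{\B P_{1}}$, and multiplication by $(i\xi)^{\beta}$ for $|\beta|=k-1$ produces a symbol of degree $-1$ that factors as $S(\xi)\,|\xi|^{-1}$ with $S$ smooth and homogeneous of degree $0$. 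This gives the operator identity
\[
\D^{k-1}P_{1}=S(D)\circ I_{1}(\B P_{1})=S(D)\circ I_{1}(\B P)-S(D)\circ I_{1}(\B P_{2}),
\]
where $I_{1}$ denotes the Riesz potential of order $1$. Mikhlin's multiplier theorem bounds $S(D)\colon\lebe^{p^{*}}\to\lebe^{p^{*}}$ while Hardy--Littlewood--Sobolev bounds $I_{1}\colon\lebe^{p}\to\lebe^{p^{*}}$, so the first summand is controlled by $c\|\B P\|_{\lebe^{p}}$. For the second, pointwise linearity of $\pi$ turns $\B P_{2}$ into a fixed linear combination of $\partial^{\alpha}\Acal[P]$ with $|\alpha|=k$; since each $I_{1}\partial^{\alpha}$ is a Riesz transform composed with some $\partial^{\alpha'}$, $|\alpha'|=k-1$, Mikhlin again delivers $\|I_{1}\B P_{2}\|_{\lebe^{p^{*}}}\leq c\|\D^{k-1}\Acal[P]\|_{\lebe^{p^{*}}}$. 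Combining these pieces yields~\eqref{eq:thm:KMS1}.

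\medskip\noindent\textbf{Necessity.} Suppose some nonzero $v\in\ker(\Acal)$ satisfies $\B[\xi_{0}]v=0$ for some $\xi_{0}\in\R^{n}\setminus\{0\}$, which after a rotation we may take parallel to $e_{1}$. Test \eqref{eq:thm:KMS1} against $P_{R}(x)\coloneqq f(\xi_{0}\cdot x)\,\chi(x'/R)\,v$ for fixed nonzero bumps $f\in\hold^{\infty}_{c}(\R)$, $\chi\in\hold^{\infty}_{c}(\R^{n-1})$ and $R\to\infty$. Since $v\in\ker(\Acal)$, $\Acal[P_{R}]\equiv 0$; since $\B[\xi_{0}]v=0$, the leading $R$-independent contribution to $\B P_{R}$ cancels, and every remaining term carries at least one derivative of $\chi(x'/R)$ and thus a factor $R^{-1}$. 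Direct scaling yields
\[
\|P_{R}\|_{\dot{\sobo}^{k-1,p^{*}}(\R^{n})}\gtrsim R^{(n-1)/p^{*}},\qquad\|\B P_{R}\|_{\lebe^{p}(\R^{n})}\lesssim R^{(n-1)/p-1}=R^{(n-1)/p^{*}-1/n},
\]
so the ratio of the two sides of \eqref{eq:thm:KMS1} diverges like $R^{1/n}$, contradicting its assumed validity.

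\medskip\noindent\textbf{Main obstacle.} The key technical subtlety is that estimating $\|\B P_{2}\|_{\lebe^{p}}$ directly would force $\|\D^{k}\Acal[P]\|_{\lebe^{p}}$ onto the right-hand side, which is strictly stronger than the critical Sobolev norm $\|\D^{k-1}\Acal[P]\|_{\lebe^{p^{*}}}$ permitted by \eqref{eq:thm:KMS1}. Inserting the Riesz potential $I_{1}$ absorbs exactly one derivative through the gain $\lebe^{p}\to\lebe^{p^{*}}$, and keeping $I_{1}$ inside the multiplier $S(D)\circ I_{1}$---rather than bounding $\B P_{1}$ in $\lebe^{p}$ and invoking Sobolev only at the very end---is precisely what matches the integrabilities on the two sides of the target inequality.
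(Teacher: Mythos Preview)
Your proof is correct. The sufficiency direction is essentially the paper's argument: both perform the algebraic split $P=\Pi_{\ker(\Acal)}[P]+\Pi_{\ker(\Acal)^{\bot}}[P]$, control the second piece pointwise via injectivity of $\Acal|_{\ker(\Acal)^{\bot}}$, and handle the first via ellipticity of $\B$ on $\ker(\Acal)$-valued maps. You phrase the elliptic step through an explicit pseudo-inverse symbol $M(\xi)$ together with the factorisation $I_{1}\circ S(D)$ and Hardy--Littlewood--Sobolev, whereas the paper packages the same content as the estimate $\|u\|_{\lebe^{p^{*}}}\leq c\|\B u\|_{\dot{\sobo}^{-k,p^{*}}}$ followed by the embedding $\lebe^{p}\hookrightarrow\dot{\sobo}^{-1,p^{*}}$; these are the same Fourier multiplier bounds in different clothing. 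Your remark about why one must not estimate $\|\B P_{2}\|_{\lebe^{p}}$ directly is exactly the point the paper addresses by staying in $\dot{\sobo}^{-k,p^{*}}$ throughout estimate~\eqref{eq:centerestimate}.

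For necessity you take a genuinely different route. The paper restricts \eqref{eq:thm:KMS1} to $\ker(\Acal)$-valued fields and then invokes \cite[Cor.~5.2]{VS} as a black box to conclude ellipticity of $\B|_{\ker(\Acal)}$. Your explicit one-parameter family $P_{R}$ is the standard ``plane-wave times slow cut-off'' construction underlying such results and makes the argument self-contained; the scaling $R^{1/n}$ you obtain is correct. One small point: the phrase ``after a rotation'' is harmless here because you are merely constructing a test function to plug into the assumed inequality, not asserting rotation-invariance of $\B$; equivalently one may write $P_{R}(x)=f(\xi_{0}\cdot x)\,\chi(\Pi_{\xi_{0}^{\perp}}x/R)\,v$ directly and compute in the original coordinates.
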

Equation \eqref{eq:ellipticityreduced} expresses the fact that $\B$ behaves like an elliptic operator on maps whose image is contained in $\ker\Acal$. This also shows the naturality of the condition, letting us control the part of the field $P$ which is contained in $\ker\Acal$ by the operator $\mathbb{B}$.  

\begin{example}
In the case $(\Acal,\B)=(\sym,\Curl)$ we have $\ker\Acal=\mathfrak{so}(n)$ and for all $\xi\neq0$ that $\ker(\B[\xi])=\{\a \otimes \xi|\;\a\in\R^n\}$. Since there are no non-trivial skew-symmetric rank-$1$ matrices the condition \eqref{eq:ellipticityreduced} is fulfilled and we recover \eqref{eq:KMSbasic}$_{1}$. 
\end{example}

\begin{remark}\label{rem:CurlGrad}
 For $\B=\Curl$ the condition \eqref{eq:ellipticityreduced} is equivalent to $\Acal[\D\boldsymbol\cdot]$ being an elliptic operator, so that we recover the previously known conditions for the validity of KMS-inequalities with $\B=\Curl$, cf.~\cite{GmSp,GLN}, but only in the case $p>1$.
\end{remark}

The following lemma is almost trivial, but since it is crucial for our purposes, we give the quick proof. 
\begin{lemma}\label{lem:aux}
Let $(X,\norm{\boldsymbol\cdot}_{X}),(Y,\norm{\boldsymbol\cdot}_{Y})$ be two finite dimensional real, normed vector spaces. A linear map $A\colon X\to Y$ is injective if and only if there exists $\lambda>0$ such that 
\begin{align}\label{eq:injinequality}
\lambda \norm{x}_{X} \leq \norm{Ax}_{Y}\qquad\text{holds for all}\;x\in X. 
\end{align}
\end{lemma}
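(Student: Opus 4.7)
The plan is to verify the two implications separately, exploiting that in finite dimensions all norms are equivalent and the unit sphere is compact.

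The easy direction is the implication ``\eqref{eq:injinequality} $\Rightarrow$ $A$ injective''. Indeed, if $\lambda>0$ satisfies \eqref{eq:injinequality} and $Ax=0$, then $\lambda\norm{x}_{X}\leq\norm{Ax}_{Y}=0$, forcing $x=0$. This requires no finite-dimensionality assumption and takes one line.

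For the converse, I would argue by compactness. Consider the unit sphere $S_{X}\coloneqq\{x\in X\mid \norm{x}_{X}=1\}$. Since $X$ is finite dimensional, $S_{X}$ is compact in the norm topology of $X$. The map $\Phi\colon X\to\R$, $\Phi(x)\coloneqq\norm{Ax}_{Y}$, is continuous (being the composition of the continuous linear map $A$ with the norm on $Y$), and because $A$ is injective one has $\Phi(x)>0$ for every $x\in S_{X}$. Thus $\Phi$ attains a strictly positive minimum $\lambda\coloneqq\min_{x\in S_{X}}\Phi(x)>0$ on $S_{X}$. Scaling a general $x\in X\setminus\{0\}$ by $\norm{x}_{X}^{-1}$ and using homogeneity of $\norm{\boldsymbol\cdot}_{Y}$ and linearity of $A$ then yields $\norm{Ax}_{Y}\geq \lambda\norm{x}_{X}$, with the inequality trivially extending to $x=0$.

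There is no real obstacle here; the only subtle point is to ensure that continuity of $\Phi$ and compactness of $S_{X}$ are invoked with respect to the \emph{same} topology on $X$. This is automatic because all norms on a finite dimensional real vector space are equivalent, so the topology induced by $\norm{\boldsymbol\cdot}_{X}$ coincides with the standard Euclidean one, for which both facts are classical. Correspondingly, the finite-dimensionality of $Y$ is only used insofar as $\norm{\boldsymbol\cdot}_{Y}$ is a genuine norm making $A$ continuous; in fact the statement persists for arbitrary normed target spaces $Y$.
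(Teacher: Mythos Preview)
Your proof is correct and follows essentially the same approach as the paper: the easy direction is immediate, and for the converse you use compactness of the unit sphere in $X$ together with continuity of $x\mapsto\norm{Ax}_{Y}$ to obtain a strictly positive minimum, then extend by homogeneity. The paper's argument is identical in substance, just slightly more terse.
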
 
\begin{proof} 
It is clear that \eqref{eq:injinequality} implies the injectivity of $A$. Now suppose that $A$ is injective. The unit sphere $\mathbb{S}_{X}\coloneqq \{z\in X|\;\norm{z}_{X}=1\}$ is compact and, since $z\mapsto Az$ is continuous, 
\begin{align*}
\min_{z\in\mathbb{S}_{X}}\norm{Az}_{Y}\eqqcolon \lambda 
\end{align*}
is attained at some $z_{0}\in\mathbb{S}_{X}$. Since $A$ is injective, this value cannot be equal to zero. We then use the linearity of $A$ to deduce \eqref{eq:injinequality}.
\end{proof} 
\begin{proof}[Proof of Theorem \ref{thm:KMS-A}]  
We start by proving '\ref{item:KMSA2}$\Rightarrow$\ref{item:KMSA1}' and put $p^{*}\coloneqq \frac{np}{n-p}$. Let $
\Pi_{\ker\Acal}\colon V\to\ker\Acal$ be the orthogonal projection onto $\ker\Acal$ and accordingly $\Pi_{(\ker\Acal)^\bot}\colon V \to(\ker\Acal)^\bot$ be the orthogonal projection onto $(\ker\Acal)^{\bot}$. For the following, let $P\in\hold_{c}^{\infty}(\R^{n};V)$ be arbitrary but fixed. 

Denoting the norms on $V,\widetilde{V}$ by $\norm{\boldsymbol\cdot}_{V},\norm{\boldsymbol\cdot}_{\widetilde{V}}$, let us note that it is possible to \emph{pointwisely} bound $\norm{\Pi_{(\ker\Acal)^\bot}[P]}_{V}$ against $\norm{\Acal[P]}_{\widetilde{V}}$. To see this, note that $\Acal|_{(\ker\Acal)^\bot}\colon(\ker\Acal)^{\bot}\to\widetilde{V}$ is injective, whereby we may employ the Lemma \ref{lem:aux}. We thus have for any $x\in\R^{n}$ (where the constant $\lambda>0$ is provided by Lemma \ref{lem:aux}) and any $\alpha\in\N_{0}^{n}$:
\begin{align}\label{eq:splotter}
\begin{split}
\lambda\norm{\partial^{\alpha}\Pi_{(\ker\Acal)^\bot}[P(x)]}_{V} & = \lambda\norm{\Pi_{(\ker\Acal)^\bot}[\partial^{\alpha}P(x)]}_{V} \\
& \leq \norm{\Acal[\Pi_{(\ker\Acal)^\bot}[\partial^{\alpha}P(x)]]}_{\widetilde{V}} \\ 
& = \norm{\Acal[\Pi_{(\ker\Acal)^\bot}[\partial^{\alpha}P(x)] + \Pi_{\ker\Acal}[\partial^{\alpha}P(x)]]}_{\widetilde{V}} \\ & = \norm{\Acal[\partial^{\alpha}P(x)]}_{\widetilde{V}}= \norm{\partial^{\alpha}\Acal[P(x)]}_{\widetilde{V}}, 
\end{split}
\end{align}
noting that $\partial^{\alpha}$ acts componentwisely and $\Pi_{\ker\Acal},\Acal$ are linear; note that $\eqref{eq:splotter}_{3}$ holds because of 
 $\Acal[\Pi_{\ker\Acal}[\partial^{\alpha}P(x)]]=0$. 

We next claim that $\B$, viewed as a differential operator on the $\ker\Acal$-valued maps
\begin{align}\label{eq:Breduced}
\B\colon \hold_{c}^{\infty} (\R^{n};\ker\Acal) \to \hold_{c}^{\infty}(\R^{n};W) \qquad\text{is elliptic}. 
\end{align}
Indeed, let  $\boldsymbol v\in\ker\Acal$ and let $\xi\in\R^{n}\setminus\{0\}$ be arbitrary. Then, using \ref{item:KMSA2},
\begin{align*}
\B[\xi]\boldsymbol v = 0 \Longleftrightarrow\boldsymbol  v\in\ker(\B[\xi])\cap\ker\Acal\stackrel{\eqref{eq:ellipticityreduced}}{\Longrightarrow}\boldsymbol  v = 0. 
\end{align*}
By classical elliptic regularity, if $\mathbb{L}$ is a linear, homogeneous elliptic differential operator on $\R^{n}$ between the finite dimensional vector spaces $V_{1}$ and $V_{2}$ of order $\widetilde{k}\in\N$, we have for any $1<\widetilde{p}<\infty$
\begin{align}\label{eq:ellipticestimatebasic}
\norm{u}_{\lebe^{\widetilde{p}}(\R^{n})}\leq c\,\norm{\mathbb{L}u}_{{\dot{\sobo}}{^{-\widetilde{k},\widetilde{p}}}(\R^{n})}\qquad\text{for all}\;u\in\hold_{c}^{\infty}(\R^{n};V_{1}), 
\end{align}
where $c=c(\widetilde{p},\mathbb{L})>0$ is a constant. Since $\Pi_{\ker\Acal}[P]$ is $\ker\Acal$-valued, we thereby obtain by \eqref{eq:ellipticestimatebasic} for all $\alpha\in\N_0^n$ with $|\alpha|=k-1$
\begin{align}\label{eq:centerestimate}
\begin{split}
\norm{\partial^{\alpha}\Pi_{\ker\Acal}[P]}_{\lebe^{\frac{np}{n-p}}(\R^{n})} & \;\;\,= \norm{\Pi_{\ker\Acal}[\partial^{\alpha}P]}_{\lebe^{\frac{np}{n-p}}(\R^{n})}
\\ & \stackrel{\eqref{eq:ellipticestimatebasic}}{\leq} c\,\norm{\B\Pi_{\ker\Acal}[\partial^{\alpha}P]}_{{\dot{\sobo}}{^{-k,\frac{np}{n-p}}}(\R^{n})} \\ 
& \;\;= c\,\norm{\B(\partial^{\alpha}P-\Pi_{(\ker\Acal)^\bot}[\partial^{\alpha}P])}_{{\dot{\sobo}}{^{-k,\frac{np}{n-p}}}(\R^{n})} \\ 
& \;\; \leq c\,\norm{\partial^{\alpha}\B P}_{{\dot{\sobo}}{^{-k,\frac{np}{n-p}}}(\R^{n})}+c\,\norm{\Pi_{(\ker\Acal)^\bot}[\partial^{\alpha}P]}_{\lebe^{\frac{np}{n-p}}(\R^{n})}  \\ 
 &\stackrel{\eqref{eq:splotter}}{\leq}  c\,\norm{\B P}_{{\dot{\sobo}}{^{-1,\frac{np}{n-p}}}(\R^{n})}+c\,\norm{\Acal[\partial^{\alpha}P]}_{\lebe^{\frac{np}{n-p}}(\R^{n})}\\
&  \;\; \leq c\,\norm{\B P}_{\lebe^{p}(\R^{n})}+c\,\norm{\partial^{\alpha}\Acal[P]}_{\lebe^{\frac{np}{n-p}}(\R^{n})}, 
\end{split}
\end{align}
where we have used that $\lebe^{p}(\R^{n};W)\hookrightarrow {\dot{\sobo}}{^{-1,\frac{np}{n-p}}}(\R^{n};W)$. In fact, this is equivalent to ${\dot{\sobo}}{^{1,\frac{np}{np-n+p}}}(\R^{n};W)\hookrightarrow\lebe^{p'}(\R^{n};W)$, and since $\frac{np}{np-n+p}<n$, its Sobolev conjugate exponent is well-defined and given by $p'$. 

To conclude, we estimate 
\begin{align*}
\|\partial^{\alpha}P\|_{\lebe^{\frac{np}{n-p}}(\R^{n})} \leq \|\partial^{\alpha}\Pi_{\ker\Acal}[P]\|_{\lebe^{\frac{np}{n-p}}(\R^{n})} + \|\partial^{\alpha}\Pi_{(\ker\Acal)^{\bot}}[P]\|_{\lebe^{\frac{np}{n-p}}(\R^{n})}, 
\end{align*}
employ \eqref{eq:centerestimate} on $\|\partial^{\alpha}\Pi_{\ker\Acal}[P]\|_{\lebe^{\frac{np}{n-p}}(\R^{n})}$, \eqref{eq:splotter} on $\|\partial^{\alpha}\Pi_{(\ker\Acal)^{\bot}}[P]\|_{\lebe^{\frac{np}{n-p}}(\R^{n})}$ and then sum the resulting inequalities over all $\alpha\in\N_0^n$ with $|\alpha|=k-1$ to conclude \eqref{eq:thm:KMS1}. This settles the sufficiency in the general case.

For the necessity, we have to show that the restricted operator
$\B\colon \hold_{c}^{\infty} (\R^{n};\ker\Acal) \to \hold_{c}^{\infty} (\R^{n};W) $ is elliptic 
as this is clearly equivalent to \eqref{eq:ellipticityreduced} (cf. \eqref{eq:Breduced}ff.). Applying inequality \eqref{eq:KMS1} to maps $P\in\hold_{c}^{\infty} (\R^{n};\ker\Acal)$, we obtain $$\norm{P}_{\dot\sobo^{k-1,\frac{np}{n-p}}(\R^n)} \le c\,\norm{\B P}_{\lebe^p(\R^n)},$$
so that the requisite reduced ellipticity, and thus \ref{item:KMSA2}, follows from \textsc{Van Schaftingen} \cite[Cor. 5.2]{VS}. This completes the proof. 
\end{proof}
\begin{remark}
In many applications, $\Acal\colon V\to V$ is directly given by a (orthogonal) projection. For instance, if we let $V=\R^{n\times n}$ and let 
\begin{itemize} 
\item $\Acal[P]\coloneqq \sym P$ together with $\widetilde{V}=\mathrm{Sym}(n)$, or 
\item $\Acal[P]\coloneqq \dev\sym P$ together with $\widetilde{V}$ being the trace-free symmetric matrices, 
\end{itemize}
then we have $\Acal^{2}=\Acal$ in each of the cases. One may then directly work with $\Pi_{(\ker\Acal)^{\bot}}=\Acal$ and $\Pi_{\ker\Acal}=\mathrm{Id}-\Acal$. 
\end{remark}
\begin{remark}
Assuming \eqref{eq:ellipticityreduced}, the same proof as for Theorem \ref{thm:KMS-A} can be employed to obtain lower order estimates. For instance, if $\mathbf{j}\in\N_0$ and $1<p<\infty$ are such that $\mathbf{j}<k$ and $p(k-\mathbf{j})<n$, then we obtain 
\begin{equation} \label{eq:thm:KMS1A}
      \norm{\D^{\mathbf{j}}P}_{\lebe^\frac{np}{n-(k-\mathbf{j}) p}(\R^{n})}\leq c\,\big(\norm{\D^{\mathbf{j}}\Acal[P]}_{\lebe^\frac{np}{n-(k-\mathbf{j})p}(\R^{n})}+\norm{\B P}_{\lebe^p(\R^{n})}\big) 
   \end{equation}
   for all $P\in\hold_{c}^{\infty}(\R^{n};V)$. One imitates the steps with the obvious modifications until \eqref{eq:centerestimate}, which is then employed with $\alpha\in\N_{0}^{n}$ with $|\alpha|=\mathbf{j}$. The only modification then takes place in $\eqref{eq:centerestimate}_{4}$, leading to 
\begin{align*}
\norm{\partial^{\alpha}\Pi_{\ker\Acal}[P]}_{\lebe^{\frac{np}{n-(k-\mathbf{j})p}}(\R^{n})} 
& \;\; \leq c\,\norm{\partial^{\alpha}\B P}_{{\dot{\sobo}}{^{-k,\frac{np}{n-(k-\mathbf{j})p}}}(\R^{n})}+c\,\norm{\Pi_{(\ker\Acal)^\bot}[\partial^{\alpha}P]}_{\lebe^{\frac{np}{n-(k-\mathbf{j})p}}(\R^{n})}  \\ 
 &\stackrel{\eqref{eq:splotter}}{\leq}  c\,\norm{\B P}_{{\dot{\sobo}}{^{\mathbf{j}-k,\frac{np}{n-(k-\mathbf{j})p}}}(\R^{n})}+c\,\norm{\Acal[\partial^{\alpha}P]}_{\lebe^{\frac{np}{n-(k-\mathbf{j})p}}(\R^{n})}\\
&  \;\; \leq c\,\norm{\B P}_{\lebe^{p}(\R^{n})}+c\,\norm{\partial^{\alpha}\Acal[P]}_{\lebe^{\frac{np}{n-p}}(\R^{n})}, 
\end{align*}
using that $\lebe^{p}(\R^{n};W)\hookrightarrow {\dot{\sobo}}{^{\mathbf{j}-k,\frac{np}{n-(k-\mathbf{j})p}}}(\R^{n};W)$ under the given assumptions. 
\end{remark} 
Let us now briefly address other space scales. If we allow the exponent $p$ to be larger than $n$, $n<p<\infty$, then scaling suggests to work with the space ${\dot{\mathrm{C}}}{^{k-1,s}}(\R^{n};V)$ with $s=1-\frac{n}{p}$. One may then realise this H\"{o}lder space as the corresponding Besov space, ${\dot{\mathrm{C}}}{^{k-1,s}}(\R^{n};V)\simeq {\dot{\mathrm{B}}}{_{\infty,\infty}^{k-1+s}}(\R^{n};V)$, cf. \cite[\S 5]{Triebel}. Put $t\coloneqq k-1+s$ in the sequel. Given $P\in\hold_{c}^{\infty}(\R^{n};V)$, we then estimate the ${\dot{\mathrm{B}}}{_{\infty,\infty}^{t}}(\R^{n};V)$-norm of $P$ by splitting $P$ into $\Pi_{\ker\Acal}[P]$ and $\Pi_{(\ker\Acal)^{\bot}}[P]$. As in the proof of Theorem \ref{thm:KMS-A}, we then only have to suitably bound $\|\Pi_{\ker\Acal}[P]\|_{{\dot{\mathrm{B}}}{_{\infty,\infty}^{t}}(\R^{n})}$.
To conclude the requisite inequality in this case, we record as a substitute for \eqref{eq:ellipticestimatebasic}
\begin{align}\label{eq:Holdclaim1}
\|u\|_{{\dot{\mathrm{B}}}{_{\infty,\infty}^{t}}(\R^{n})}\leq c\|\mathbb{L}u\|_{{\dot{\mathrm{B}}}{_{\infty,\infty}^{t-\widetilde{k}}}(\R^{n})}, \qquad u\in\hold_{c}^{\infty}(\R^{n};V_{1}), 
\end{align}
and, moreover,  
\begin{align}\label{eq:Holdclaim2}
\lebe^{p}(\R^{n})\hookrightarrow{\dot{\mathrm{B}}}{_{\infty,\infty}^{t-k}}(\R^{n}). 
\end{align}
We now briefly explain the validity of \eqref{eq:Holdclaim1} and \eqref{eq:Holdclaim2}. Inequality \eqref{eq:Holdclaim1} is folklore, and one may argue rigorously as follows: Since in the framework of \eqref{eq:ellipticestimatebasic} the $\widetilde{k}$-th order operator $\mathbb{L}$ is assumed elliptic, the Fourier multiplier operator 
\begin{align*}
&T_{m_{1}}(g)\coloneqq  \mathscr{F}^{-1}[m_{1}(\xi)\widehat{g}] \coloneqq  \mathscr{F}^{-1}[\big( |\xi|^{\widetilde{k}}(\mathbb{L}^{*}[\xi]\mathbb{L}[\xi])^{-1}\mathbb{L}^{*}[\xi] \big)\widehat{g}],
\end{align*}
is well-defined on maps $g\in\hold_{c}^{\infty}(\R^{n};V_{2})$, and defining for $h\in\mathscr{S}'(\R^{n};V_{1})$
\begin{align*}
&T_{m_{2}}(h)\coloneqq  \mathscr{F}^{-1}[m_{2}(\xi)\widehat{h}] \coloneqq  \mathscr{F}^{-1}[|\xi|^{-\widetilde{k}}\widehat{h}], 
\end{align*}
we have $T_{m_{2}}T_{m_{1}}\mathbb{L}u=u$ for all $u\in\hold_{c}^{\infty}(\R^{n};V_{1})$. We then invoke \textsc{Triebel} \cite[Thm. 1, \S 5.2.3]{Triebel} to find that $T_{m_{2}}\colon {\dot{\mathrm{B}}}{_{\infty,\infty}^{t-\widetilde{k}}}(\R^{n};V_{1})\to{\dot{\mathrm{B}}}{_{\infty,\infty}^{t}}(\R^{n};V_{1})$ boundedly. On the other hand, the symbol of $T_{m_{1}}$ is componentwisely smooth off zero and homogeneous of degree zero. Under these assumptions, \cite[Thm. 4.13]{Duoandikaetxea} implies that there exists $z_{0}\in\mathscr{L}(V_{2};V_{1}+\imag V_{1})$ and a $\hold^{\infty}$-function $\Theta\colon\mathbb{S}^{n-1}\to\mathscr{L}(V_{2};V_{1})$ with zero mean for $\mathscr{H}^{n-1}\mres\mathbb{S}^{n-1}$ such that we have the representation 
\begin{align} 
T_{m_{1}}g= z_{0}g + \mathrm{p.v.}\frac{\Theta(\frac{x}{|x|})}{|x|^{n}}*g\qquad\text{for all}\;g\in\hold_{c}^{\infty}(\R^{n};V_{2}). 
\end{align}
This particularly implies that the kernel $K(x)\coloneqq |x|^{-n}\Theta(\frac{x}{|x|})$ satisfies for some constants $A_{1},A_{2},A_{3}>0$
\begin{align}
&\sup_{0<R<\infty}\frac{1}{R}\int_{\overline{\ball}_{R}(0)}|K(x)|\,|x|\dif x \leq A_{1}& \text{(size condition)}, \notag\\ 
& \sup_{y\in\R^{n}\setminus\{0\}}\int_{\R^{n}\setminus \ball_{2|y|}(0)}|K(x-y)-K(x)|\dif x \leq A_{2} & \text{(H\"{o}rmander's condition)},\\ 
& \sup_{0<R_{1}<R_{2}<\infty}\left\vert \int_{\ball_{R_{2}}(0)\setminus\overline{\ball}_{R_{1}}(0)}K(x)\dif x\right\vert \leq A_{3} & \text{(cancellation condition)}\notag
\end{align}
in the terminology of \textsc{Grafakos} \cite[\S 6.7.1]{Grafakos}. In consequence, \cite[Cor. 6.7.2]{Grafakos} yields that $T_{m_{1}}\colon {\dot{\mathrm{B}}}{_{\infty,\infty}^{t-\widetilde{k}}}(\R^{n};V_{2})\to{\dot{\mathrm{B}}}{_{\infty,\infty}^{t-\widetilde{k}}}(\R^{n};V_{1})$ boundedly. Summarising, 
\begin{align}\label{eq:composed}
\begin{split}
\|u\|_{{\dot{\mathrm{B}}}{_{\infty,\infty}^{t}}(\R^{n})} & = \|T_{m_{2}}(T_{m_{1}}(\mathbb{L}u))\|_{{\dot{\mathrm{B}}}{_{\infty,\infty}^{t}}(\R^{n})} \\ & \leq c\|T_{m_{1}}(\mathbb{L}u)\|_{{\dot{\mathrm{B}}}{_{\infty,\infty}^{t-\widetilde{k}}}(\R^{n})}\leq c\|\mathbb{L}u\|_{{\dot{\mathrm{B}}}{_{\infty,\infty}^{t-\widetilde{k}}}(\R^{n})}, 
\end{split}
\end{align}
which is \eqref{eq:Holdclaim1}. 
On the other hand, \eqref{eq:Holdclaim2} follows from \cite[Thm. 2.7.1]{Triebel}
\begin{align*}
\lebe^{p}(\R^{n})\simeq {\dot{\mathrm{F}}}{_{p,2}^{0}}(\R^{n}) \hookrightarrow {\dot{\mathrm{B}}}{_{p,\infty}^{0}}(\R^{n})\hookrightarrow  {\dot{\mathrm{B}}}{_{\infty,\infty}^{t-k}}(\R^{n})
\end{align*}
upon recalling the definition of $t$ and $s$ in terms of $k,p$ and $n$. Recalling that ${\dot{\mathrm{C}}}{^{k-1,s}}(\R^{n};V)\simeq {\dot{\mathrm{B}}}{_{\infty,\infty}^{k-1+s}}(\R^{n};V)$ and using Lemma \ref{lem:aux} on $\Pi_{(\ker\Acal)^{\bot}}[P]$ in the second inequality, we have 
\begin{align}
\|\Pi_{(\ker\Acal)^{\bot}}[P]\|_{{\dot{\mathrm{B}}}{_{\infty,\infty}^{s}}(\R^{n})} & \leq c\sum_{|\alpha|=k-1}\|\Pi_{(\ker\Acal)^{\bot}}[\partial^{\alpha}P]\|_{{\dot{\mathrm{C}}}{^{0,s}}(\R^{n})} \label{eq:indirect}\\ & \leq \frac{c}{\lambda}\sum_{|\alpha|=k-1}\|\Acal[\partial^{\alpha}P]\|_{{\dot{\mathrm{C}}}{^{0,s}}(\R^{n})} = \frac{c}{\lambda}\sum_{|\alpha|=k-1}\|\partial^{\alpha}\Acal[P]\|_{{\dot{\mathrm{C}}}{^{0,s}}(\R^{n})} \notag
\end{align}
so that estimates \eqref{eq:Holdclaim1} and \eqref{eq:Holdclaim2} combine to the corresponding KMS-type inequality\footnote{Strictly speaking, inequality \eqref{eq:KMMinequality} should be referred to as a generalised \emph{Korn-Maxwell-Morrey inequality}; note that, if we put $\Acal\equiv 0$ and $\B \coloneqq \D^{k}$, then \eqref{eq:ellipticityreduced} is certainly fulfilled and the resulting inequality is just \textsc{Morrey}'s inequality underlying the embedding ${\dot{\sobo}}{^{1,p}}(\R^{n})\hookrightarrow {\dot{\hold}}{^{0,1-n/p}}(\R^{n})$ for $p>n$.} 
\begin{align}\label{eq:KMMinequality}
\|\D^{k-1}P\|_{{\dot{\hold}}{^{0,s}}(\R^{n})} \leq c\Big( \|\D^{k-1}\Acal[P]\|_{{\dot{\hold}}{^{0,s}}(\R^{n})} + \|\B P\|_{\lebe^{p}(\R^{n})} \Big)
\end{align}
for all $P\in\hold_{c}^{\infty}(\R^{n};V)$. 

This scheme of proof also persists for other Besov spaces and Triebel-Lizorkin spaces, and it is possible to directly argue via Fourier multipliers and avoid \eqref{eq:indirect}. Since they provide us with a limiting case of independent interest, we focus on the ${\dot{\mathrm{F}}}{_{p,q}^{\gamma}}$-spaces in the sequel. Here, the modification of our above approach hinges on the following Mihlin-H\"{o}rmander multiplier-type and embedding result:
\begin{lemma}[{\cite[Thm. 5.1]{Cho}, \cite[Thm. 2.7.1]{Triebel}}]\label{lem:Triebel}
Let $0<q\leq\infty$. Then the following hold: 
\begin{enumerate}
\item\label{item:Triebel1} Let $\alpha,\gamma\in\R$ and $0<p<\infty$. Given $\ell\in\N$ with $\ell>\max\{\frac{n}{p},\frac{n}{q}\}+\frac{n}{2}$, let $m\in\hold^{\ell}(\R^{n}\setminus\{0\})$ be a function that satisfies the \emph{generalised H\"{o}rmander condition}, meaning that we have for all $\sigma\in\N_{0}^{n}$ with $|\sigma|\leq\ell$
\begin{align}\label{eq:HM}
\sup_{R>0}\Big(R^{-n+2\alpha+2|\sigma|}\int_{\{R<|\xi|<2R\}}|\partial_{\xi}^{\sigma}m(\xi)|^{2}\dif\xi\Big) \leq C_{\sigma}<\infty.
\end{align}
Then the Fourier multiplier operator $T_{m}u\coloneqq\mathscr{F}^{-1}(m\widehat{u})$, originally defined on $\hold_{c}^{\infty}(\R^{n})$, extends to a bounded linear operator 
\begin{align}
T_{m}\colon {\dot{\mathrm{F}}}{_{p,q}^{\gamma}}(\R^{n})\to{\dot{\mathrm{F}}}{_{p,q}^{\alpha+\gamma}}(\R^{n}).
\end{align}
\item\label{item:Triebel2} Let $0<p_{2}< p_{1}<\infty$ and $-\infty<s_{2}< s_{1}<\infty$. Then we have 
\begin{align*}
{\dot{\mathrm{F}}}{_{p_{2},q}^{s_{1}}}(\R^{n})\hookrightarrow {\dot{\mathrm{F}}}{_{p_{1},q}^{s_{2}}}(\R^{n})\qquad\text{provided}\;\;s_{2}-\frac{n}{p_{1}}=s_{1}-\frac{n}{p_{2}}.
\end{align*}
\end{enumerate}
\end{lemma}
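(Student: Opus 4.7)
The plan is to establish both assertions via the standard Littlewood-Paley machinery underlying the cited references \cite{Cho, Triebel}. I would fix a radial bump $\psi\in\hold_{c}^{\infty}(\R^{n})$ supported in $\{1/2<|\xi|<2\}$ such that $\sum_{j\in\Z}\psi(2^{-j}\xi)=1$ on $\R^{n}\setminus\{0\}$, so that the associated Littlewood-Paley projectors $\Delta_{j}u\coloneqq \mathscr{F}^{-1}(\psi(2^{-j}\boldsymbol\cdot)\widehat{u})$ realise the homogeneous Triebel-Lizorkin quasi-norm through
\[
\|u\|_{{\dot{\mathrm{F}}}{_{p,q}^{\gamma}}(\R^{n})}\simeq \Big\|\Big(\sum_{j\in\Z}2^{j\gamma q}|\Delta_{j}u|^{q}\Big)^{1/q}\Big\|_{\lebe^{p}(\R^{n})}.
\]
The plan then diverges along the two items.

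For the multiplier statement \ref{item:Triebel1}, I would split $m=\sum_{j\in\Z}m_{j}$ with $m_{j}\coloneqq m\cdot\psi(2^{-j}\boldsymbol\cdot)$ and show that the scaling weight $R^{2\alpha}$ in \eqref{eq:HM}, combined with Plancherel and Cauchy-Schwarz, produces the pointwise kernel estimate $|K_{j}(x)|\leq C\,2^{j(n-\alpha)}(1+2^{j}|x|)^{-\ell}$ with $K_{j}\coloneqq\mathscr{F}^{-1}(m_{j})$. Since $\ell>\max\{n/p,n/q\}+n/2$, there is some $\eta<\min\{p,q\}$ with $\ell>n/\eta$, which lets me dominate $|K_{j}*g|$ pointwise by $C\,2^{-j\alpha}M_{\eta}g$, where $M_{\eta}g\coloneqq (M(|g|^{\eta}))^{1/\eta}$ and $M$ is the Hardy-Littlewood maximal operator. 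The almost-orthogonality $T_{m_{j}}\Delta_{k}u\neq 0$ only for $|j-k|\leq 2$ restricts the relevant sums to near-diagonal terms, and the Fefferman-Stein vector-valued maximal inequality on $\lebe^{p/\eta}(\ell^{q/\eta})$ then produces the required ${\dot{\mathrm{F}}}{_{p,q}^{\alpha+\gamma}}$-bound on $T_{m}u$.

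For the embedding \ref{item:Triebel2}, I would appeal to Bernstein's inequality: any tempered distribution $f$ with $\spt(\widehat{f})\subset\{|\xi|\leq 2^{j+1}\}$ satisfies $\|f\|_{\lebe^{p_{2}}(\R^{n})}\leq C\,2^{jn(1/p_{1}-1/p_{2})}\|f\|_{\lebe^{p_{1}}(\R^{n})}$. Applied to each $\Delta_{j}u$ and combined with the scaling identity $s_{1}-n/p_{1}=s_{2}-n/p_{2}$, the weights $2^{js_{2}}$ arising in the ${\dot{\mathrm{F}}}{_{p_{2},q}^{s_{2}}}$-norm convert precisely into $2^{js_{1}}$. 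A Minkowski-type exchange between the outer $\lebe^{p_{2}}$-norm and the inner $\ell^{q}$-norm, valid because $p_{1}<p_{2}$, then delivers the embedding.

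The main obstacle I anticipate lies in \ref{item:Triebel1} for small $p$ and $q$: the Hardy-Littlewood maximal operator fails to be bounded on $\lebe^{p}(\R^{n})$ when $p\leq 1$, so the Fefferman-Stein inequality is not directly applicable. This is precisely what forces the threshold $\ell>\max\{n/p,n/q\}+n/2$, since the kernel decay $(1+2^{j}|x|)^{-\ell}$ must be fast enough to support the power-reduction to $M_{\eta}$ for some $\eta<\min\{p,q\}$, after which Fefferman-Stein on $\lebe^{p/\eta}(\ell^{q/\eta})$ becomes applicable. A subsidiary technicality is to justify convergence of $\sum_{j}m_{j}$ to $m$ as a Fourier multiplier on the tempered distributions modulo polynomials which form the ambient space of the homogeneous scale; this is routine but must be kept in mind to prevent the low-frequency sum from being interpreted incorrectly.
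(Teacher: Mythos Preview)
The paper does not supply its own proof of this lemma --- it is quoted from the references \cite{Cho} and \cite{Triebel}, with the only additional remark being that \ref{item:Triebel2} is in fact a consequence of \ref{item:Triebel1}. Your sketch for \ref{item:Triebel1} is essentially the standard argument underlying \cite{Cho} and is sound in outline.

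Your argument for \ref{item:Triebel2}, however, has a genuine gap. Bernstein's inequality controls $\|\Delta_{j}u\|_{\lebe^{p_{2}}}$ by $2^{jn(1/p_{1}-1/p_{2})}\|\Delta_{j}u\|_{\lebe^{p_{1}}}$, but the Triebel--Lizorkin quasi-norm carries the Lebesgue norm \emph{outside} the $\ell^{q}$-sum. To feed Bernstein into the estimate you would first need
\[
\Big\|\Big(\sum_{j}|f_{j}|^{q}\Big)^{1/q}\Big\|_{\lebe^{p_{2}}}\leq \Big(\sum_{j}\|f_{j}\|_{\lebe^{p_{2}}}^{q}\Big)^{1/q},
\]
which by Minkowski requires $p_{2}\leq q$, and at the end
\[
\Big(\sum_{j}\|g_{j}\|_{\lebe^{p_{1}}}^{q}\Big)^{1/q}\leq \Big\|\Big(\sum_{j}|g_{j}|^{q}\Big)^{1/q}\Big\|_{\lebe^{p_{1}}},
\]
which requires $q\leq p_{1}$. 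These two constraints together force $p_{2}\leq p_{1}$, contradicting $p_{1}<p_{2}$; so the ``Minkowski-type exchange, valid because $p_{1}<p_{2}$'' that you invoke cannot work across the full range $0<q\leq\infty$. The actual proof of \ref{item:Triebel2} (as in \cite[Thm.~2.7.1]{Triebel}, or via the Jawerth--Franke embeddings) is more subtle and does not proceed by a direct Bernstein-plus-Minkowski argument; the paper's comment that \ref{item:Triebel2} follows from \ref{item:Triebel1} points to a multiplier-based route, though this is not spelled out.
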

Note that in the previous theorem, \ref{item:Triebel2} is actually a consequence of \ref{item:Triebel1}, but we prefer to state the theorem in this way to facilitate future referencing. We now have 
\begin{theorem}[Generalised KMS-inequalities of the first kind in the TL-scales]\label{thm:Triebel}
Let $0<p_{2}< p_{1}<\infty$ and $-\infty<s_{1}< s_{2}<\infty$ be such that 
\begin{align}\label{eq:embcond}
s_{1}-s_{2}\leq k\;\;\;\text{and}\;\;\;s_{1}-\frac{n}{p_{1}}=s_{2}+k-\frac{n}{p_{2}},
\end{align}
Moreover, let the part map $\Acal$ and the $k$-th order differential operator $\B $ as introduced in Section \ref{sec:genincompatibiltiies} satisfy 
\begin{align}\label{eq:ellipticityreduced1}
\bigcup_{\xi\in\R^{n}\setminus\{0\}}\ker\Acal\cap \ker(\B [\xi]) =\{0\}. 
\end{align} 
Then for any $0<q<\infty$ there exists a constant $c=c(s_{2},p_{1},q,\Acal,\B )>0$ such that 
\begin{align}\label{eq:TLclaim}
\norm{P}_{{\dot{\mathrm{F}}}{_{p_{1},q}^{s_{1}}}(\R^{n})}\leq c\Big( \norm{\Acal[P]}_{{\dot{\mathrm{F}}}{_{p_{1},q}^{s_{1}}}(\R^{n})} + \norm{\B P}_{{\dot{\mathrm{F}}}{_{p_{2},q}^{s_{2}}}(\R^{n})} \Big).
\end{align}
\end{theorem}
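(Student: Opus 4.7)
The plan is to push through the algebraic split from the proof of Theorem \ref{thm:KMS-A}, trading the $\lebe^{p}$-based elliptic estimates for the Triebel-Lizorkin multiplier theory recorded in Lemma \ref{lem:Triebel}. First, I decompose $P=\Pi_{\ker(\Acal)}[P]+\Pi_{\ker(\Acal)^{\bot}}[P]$ orthogonally. The injectivity of $\Acal|_{\ker(\Acal)^{\bot}}$ given by Lemma \ref{lem:aux} yields $\|\Pi_{\ker(\Acal)^\bot}[P(x)]\|_{V}\leq\lambda^{-1}\|\Acal[P(x)]\|_{\widetilde V}$ pointwise. Writing $\Pi_{\ker(\Acal)^{\bot}}=\Acal^{+}\circ\Acal$ for a partial inverse $\Acal^{+}\colon\widetilde{V}\to\ker(\Acal)^{\bot}$, the boundedness of this fixed linear map between finite-dimensional spaces (commuting with the Littlewood-Paley projections componentwise) implies
\begin{equation*}
\|\Pi_{\ker(\Acal)^\bot}[P]\|_{{\dot{\mathrm{F}}}{_{p_{1},q}^{s_{1}}}(\R^{n})}\leq c\,\|\Acal[P]\|_{{\dot{\mathrm{F}}}{_{p_{1},q}^{s_{1}}}(\R^{n})}.
\end{equation*}

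For $Q:=\Pi_{\ker(\Acal)}[P]$ I would invoke the reduced ellipticity \eqref{eq:ellipticityreduced1}, which makes $\B[\xi]\colon\ker(\Acal)\to W$ injective for all $\xi\in\R^{n}\setminus\{0\}$, and form the pseudo-inverse Fourier symbol
\begin{equation*}
m_{1}(\xi):=\bigl((\B[\xi]|_{\ker(\Acal)})^{*}(\B[\xi]|_{\ker(\Acal)})\bigr)^{-1}(\B[\xi]|_{\ker(\Acal)})^{*}\colon W\to\ker(\Acal).
\end{equation*}
This $m_{1}$ is smooth off zero and $(-k)$-homogeneous, whence $|\partial^{\sigma}m_{1}(\xi)|\lesssim|\xi|^{-k-|\sigma|}$ and a direct check shows that \eqref{eq:HM} holds with $\alpha=k$. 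Lemma \ref{lem:Triebel}\ref{item:Triebel1} then supplies the elliptic gain $T_{m_{1}}\colon{\dot{\mathrm{F}}}{_{p_{2},q}^{s_{2}}}(\R^{n})\to{\dot{\mathrm{F}}}{_{p_{2},q}^{s_{2}+k}}(\R^{n})$. Since $T_{m_{1}}(\B Q)=Q$ by construction and $\B Q=\B P-\B\,\Pi_{\ker(\Acal)^{\bot}}[P]$, I split
\begin{equation*}
Q=T_{m_{1}}(\B P)-T_{m_{1}\B[\cdot]}\bigl(\Pi_{\ker(\Acal)^{\bot}}[P]\bigr),
\end{equation*}
where the symbol $\xi\mapsto m_{1}(\xi)\B[\xi]$ of the second operator is smooth off zero and $0$-homogeneous; Lemma \ref{lem:Triebel}\ref{item:Triebel1} with $\alpha=0$ renders it bounded on ${\dot{\mathrm{F}}}{_{p_{1},q}^{s_{1}}}(\R^{n})$, and the first step controls the result by $\|\Acal[P]\|_{{\dot{\mathrm{F}}}{_{p_{1},q}^{s_{1}}}}$.

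The final move is to upgrade the elliptic gain from ${\dot{\mathrm{F}}}{_{p_{2},q}^{s_{2}+k}}$ to the target space ${\dot{\mathrm{F}}}{_{p_{1},q}^{s_{1}}}$, which is precisely the content of the Sobolev-type embedding in Lemma \ref{lem:Triebel}\ref{item:Triebel2}, valid exactly under the scaling condition \eqref{eq:embcond}. Assembling the contributions for $Q$ and for $\Pi_{\ker(\Acal)^{\bot}}[P]$ then delivers \eqref{eq:TLclaim}. The main technical obstacle I anticipate is a careful verification of the generalised Hörmander bound \eqref{eq:HM} to sufficiently high order $\ell>\max\{n/p_{2},n/q\}+n/2$ for the rational symbol $m_{1}$ and its $0$-homogeneous companion $m_{1}\B[\cdot]$; once that bookkeeping is done, the algebraic split, the multiplier theorem, and the embedding compose mechanically and no further function-space input is required.
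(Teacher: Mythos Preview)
Your proposal is correct and follows essentially the same route as the paper: the algebraic split $P=\Pi_{\ker(\Acal)}[P]+\Pi_{\ker(\Acal)^{\bot}}[P]$, the pseudo-inverse symbol built from the reduced ellipticity, Lemma~\ref{lem:Triebel}\ref{item:Triebel1} for the multiplier estimate, and Lemma~\ref{lem:Triebel}\ref{item:Triebel2} for the Sobolev-type embedding. The only organisational difference is that the paper applies $T_{m}$ at integrability $p_{1}$ (so $T_{m}\colon{\dot{\mathrm{F}}}{_{p_{1},q}^{s_{1}-k}}\to{\dot{\mathrm{F}}}{_{p_{1},q}^{s_{1}}}$) and then invokes the embedding ${\dot{\mathrm{F}}}{_{p_{2},q}^{s_{2}}}\hookrightarrow{\dot{\mathrm{F}}}{_{p_{1},q}^{s_{1}-k}}$ on $\B P$, whereas you apply $T_{m_{1}}$ at integrability $p_{2}$ and then embed ${\dot{\mathrm{F}}}{_{p_{2},q}^{s_{2}+k}}\hookrightarrow{\dot{\mathrm{F}}}{_{p_{1},q}^{s_{1}}}$; these are the same embedding shifted by $k$ on the smoothness scale. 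Likewise, your treatment of the cross term via the $0$-homogeneous symbol $m_{1}(\xi)\B[\xi]$ is equivalent to the paper's two-step argument $\|\B\Pi_{\ker(\Acal)^{\bot}}[P]\|_{{\dot{\mathrm{F}}}{_{p_{1},q}^{s_{1}-k}}}\leq c\|\Pi_{\ker(\Acal)^{\bot}}[P]\|_{{\dot{\mathrm{F}}}{_{p_{1},q}^{s_{1}}}}$ followed by $T_{m}$. The ``obstacle'' you flag is not one: since $m_{1}$ is $\hold^{\infty}$ off the origin and $(-k)$-homogeneous, all its derivatives are homogeneous of the right degree and \eqref{eq:HM} holds for every $\ell\in\N$, exactly as the paper notes.
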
 
\begin{proof} 
Let $P\in\hold_{c}^{\infty}(\R^{n};V)$. Similarly as in the proof of Theorem \ref{thm:KMS-A}, we start with the algebraic split $P=\Pi_{\ker\Acal}[P]+\Pi_{(\ker\Acal)^\bot}[P]$. Now consider the operator $\B $ which, by \eqref{eq:ellipticityreduced1}, is elliptic on $\hold_{c}^{\infty}(\R^{n};\ker\Acal)$ just as in the proof of Theorem \ref{thm:KMS-A}. Now consider the Fourier multiplier operator 
\begin{align*}
T_{m}Q \coloneqq \mathscr{F}^{-1}\big((\B^* [\xi]\B [\xi])^{-1}\B ^{*}[\xi]\widehat{Q}\big), 
\end{align*}
the symbol of which is $\hold^{\infty}$ in $\R^{n}\setminus\{0\}$ and homogeneous of degree $(-k)$. Therefore, all its $\beta$-th partial derivatives, $\beta\in\N_{0}^{n}$, are homogeneous of degree $(-k-|\beta|)$. Thus, \eqref{eq:HM} is fulfilled for any $\ell\in\N$. 
We now apply Lemma \ref{lem:Triebel} \ref{item:Triebel1} to $\gamma\coloneqq s_{1}-k$ and $\alpha\coloneqq k$ which, by virtue of $\Pi_{\ker\Acal}[P] =T_{m}[\B \Pi_{\ker\Acal}[P]]$, gives us
\begin{align}
\norm{\Pi_{\ker\Acal}[P]}_{{\dot{\mathrm{F}}}{_{p_{1},q}^{s_{1}}}(\R^{n})} & = \norm{T_{m}[\B \Pi_{\ker\Acal}[P]]}_{{\dot{\mathrm{F}}}{_{p_{1},q}^{s_{1}}}(\R^{n})} \notag\\ & \leq c\,\norm{\B \Pi_{\ker\Acal}[P]}_{{\dot{\mathrm{F}}}{_{p_{1},q}^{s_{1}-k}}(\R^{n})}.
\end{align}
We then estimate the homogeneous Triebel-Lizorkin norm of the key term as follows:
\begin{align}\label{eq:babyoriley}
\norm{\Pi_{\ker\Acal}[P]}_{{\dot{\mathrm{F}}}{_{p_{1},q}^{s_{1}}}(\R^{n})} & \leq c(\norm{\B\Pi_{\ker\Acal}[P]}_{{\dot{\mathrm{F}}}{_{p_{1},q}^{s_{1}-k}}(\R^{n})}  \notag\\ &\leq c(\norm{\B P}_{{\dot{\mathrm{F}}}{_{p_{1},q}^{s_{1}-k}}(\R^{n})} +
\norm{\B \Pi_{(\ker\Acal)^\bot}[P]}_{{\dot{\mathrm{F}}}{_{p_{1},q}^{s_{1}-k}}(\R^{n})})\notag\\ & \leq c( \norm{\B P}_{{\dot{\mathrm{F}}}{_{p_{1},q}^{s_{1}-k}}(\R^{n})} + \norm{\Pi_{(\ker\Acal)^\bot}[P]}_{{\dot{\mathrm{F}}}{_{p_{1},q}^{s_{1}}}(\R^{n})}).
\end{align}
Condition \eqref{eq:embcond} implies that $s_{1}-k\leq s_{2}$ and, in particular, 
\begin{align}\label{eq:embeddinginproof}
{\dot{\mathrm{F}}}{_{p_{2},q}^{s_{2}}}(\R^{n}) \hookrightarrow {\dot{\mathrm{F}}}{_{p_{1},q}^{s_{1}-k}}(\R^{n}). 
\end{align}
We thus obtain 
\begin{align*}
\norm{P}_{{\dot{\mathrm{F}}}{_{p_{1},q}^{s_{1}}}(\R^{n})} & \leq c\Big(\norm{\Pi_{(\ker\Acal)^{\bot}}[P]}_{{\dot{\mathrm{F}}}{_{p_{1},q}^{s_{1}}}(\R^{n})} + \norm{\B P}_{{\dot{\mathrm{F}}}{_{p_{2},q}^{s_{2}}}(\R^{n})}  \Big), 
\end{align*}
and, realising that $\mathrm{Id}_{(\ker\Acal)^{\bot}}= (\Acal^{*}\Acal|_{(\ker\Acal)^{\bot}})^{-1}(\Acal^{*}\Acal|_{(\ker\Acal)^{\bot}})$, we obtain 
\begin{align}\label{eq:injectiveestimate}
\norm{\Pi_{(\ker\Acal)^{\bot}}[P]}_{{\dot{\mathrm{F}}}{_{p_{1},q}^{s_{1}}}(\R^{n})} \leq c\,\norm{\Acal[P]}_{{\dot{\mathrm{F}}}{_{p_{1},q}^{s_{1}}}(\R^{n})},
\end{align}
and thereby conclude \eqref{eq:TLclaim}. The proof is complete. 
\end{proof} 
The sufficiency part of Theorem \ref{thm:KMS-A} can then be retrieved from Theorem \ref{thm:Triebel} as follows. For the Lebesgue scale, we set $s_{1}=k-1$, $s_{2}=0$ (whereby $\eqref{eq:embcond}_{1}$ is fulfilled), and then put $p_{2}=p$, letting us compute $p_{1}$ via $\eqref{eq:embcond}_{2}$ as 
$p_{1}=\frac{np}{n-p}$ provided $p<n$. Realising that ${\dot{\mathrm{F}}}{_{p,2}^{0}}(\R^{n})\simeq \lebe^{p}(\R^{n})$ for $1<p<\infty$, we recover the sufficiency part of Theorem \ref{thm:KMS-A} in the Lebesgue scale. If $p=1$, we have instead ${\dot{\mathrm{F}}}{_{1,2}^{0}}(\R^{n})\simeq \mathcal{H}^{1}(\R^{n})$ for $p=1$ with the (homogeneous) Hardy space $\mathcal{H}^{1}(\R^{n})$ (see, e.g. \cite[Rem. 6.5.2]{Grafakos} ). This observation gives us the following borderline inequality: 
\begin{corollary}[Inequalities involving $\mathcal{H}^{1}$]\label{cor:Hardy}
Let the part map $\Acal$ and the \emph{first order} differential operator $\B $ as introduced in Section \ref{sec:genincompatibiltiies} satisfy \eqref{eq:ellipticityreduced1}. Then there exists a constant $c>0$ such that we have 
\begin{align}\label{eq:TLclaimH1}
\norm{P}_{\lebe^{\frac{n}{n-1}}(\R^{n})}\leq c\Big( \norm{\Acal[P]}_{\lebe^{\frac{n}{n-1}}(\R^{n})} + \norm{\B P}_{\mathcal{H}^{1}(\R^{n})} \Big)\quad\text{for all}\;P\in\hold_{c}^{\infty}(\R^{n};V). 
\end{align}
\end{corollary}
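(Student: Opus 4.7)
The plan is to derive the estimate as the $p_{2}=1$ specialisation of Theorem \ref{thm:Triebel} and then re-express the resulting homogeneous Triebel--Lizorkin norms in the more familiar Lebesgue and Hardy space scales, exactly along the lines of the remark preceding the corollary.

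First I would apply Theorem \ref{thm:Triebel} with the choice of parameters
\begin{align*}
k = 1, \qquad s_{1} = s_{2} = 0, \qquad p_{1} = \tfrac{n}{n-1}, \qquad p_{2} = 1, \qquad q = 2.
\end{align*}
Both sub-conditions of \eqref{eq:embcond} are then satisfied: trivially $s_{1}-s_{2}=0 \leq 1 = k$, and moreover $s_{1}-\tfrac{n}{p_{1}} = -(n-1) = 1-n = k+s_{2}-\tfrac{n}{p_{2}}$. Since $\B$ is first order, the wave-cone hypothesis \eqref{eq:ellipticityreduced1} is exactly the standing assumption of the corollary. Consequently, Theorem \ref{thm:Triebel} delivers a constant $c>0$ such that
\begin{align*}
\norm{P}_{{\dot{\mathrm{F}}}{_{n/(n-1),2}^{0}}(\R^{n})} \leq c\Bigl( \norm{\Acal[P]}_{{\dot{\mathrm{F}}}{_{n/(n-1),2}^{0}}(\R^{n})} + \norm{\B P}_{{\dot{\mathrm{F}}}{_{1,2}^{0}}(\R^{n})} \Bigr)
\end{align*}
for every $P \in \hold_{c}^{\infty}(\R^{n}; V)$.

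To conclude I would invoke the Triebel--Lizorkin identifications already listed just before the corollary, namely ${\dot{\mathrm{F}}}{_{p,2}^{0}}(\R^{n}) \simeq \lebe^{p}(\R^{n})$ for $1<p<\infty$ (applied to the $P$- and $\Acal[P]$-terms via \cite[Thm.~2.7.1]{Triebel}) together with ${\dot{\mathrm{F}}}{_{1,2}^{0}}(\R^{n}) \simeq \mathcal{H}^{1}(\R^{n})$ (applied to the $\B P$-term via \cite[Rem.~6.5.2]{Grafakos}). Substituting these identifications into the display above yields \eqref{eq:TLclaimH1} and completes the argument.

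The only point that genuinely warrants inspection, and hence the main obstacle I would foresee, is verifying that Theorem \ref{thm:Triebel} remains available at the borderline $p_{2}=1$. Scrutinising its proof, the Mihlin--H\"ormander multiplier statement of Lemma \ref{lem:Triebel}\ref{item:Triebel1} is valid in the full range $0 < p < \infty$, and the Sobolev-type embedding \eqref{eq:embeddinginproof} used in \eqref{eq:babyoriley} specialises to the strict embedding ${\dot{\mathrm{F}}}{_{1,2}^{0}}(\R^{n}) \hookrightarrow {\dot{\mathrm{F}}}{_{n/(n-1),2}^{-1}}(\R^{n})$, which causes no trouble. Hence no new ingredient beyond the machinery already developed is required.
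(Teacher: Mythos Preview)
Your argument is correct and follows exactly the route taken in the paper: specialise Theorem~\ref{thm:Triebel} to $k=1$, $s_{1}=s_{2}=0$, $p_{1}=\tfrac{n}{n-1}$, $p_{2}=1$, $q=2$, and then invoke the identifications $\dot{\mathrm F}^{0}_{p,2}\simeq\lebe^{p}$ for $1<p<\infty$ and $\dot{\mathrm F}^{0}_{1,2}\simeq\mathcal{H}^{1}$. Your closing paragraph, where you verify that the multiplier bound of Lemma~\ref{lem:Triebel}\ref{item:Triebel1} and the embedding~\eqref{eq:embeddinginproof} remain valid at $p_{2}=1$, is a useful addition since the strict inequalities $p_{1}<p_{2}$ and $s_{2}<s_{1}$ recorded in the hypotheses of Theorem~\ref{thm:Triebel} are not literally met here (nor in the paper's own Lebesgue-scale deduction); what the proof actually uses is precisely the pair of conditions~\eqref{eq:embcond}, which you verify.
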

\begin{remark}\label{rem:p=1Hardy}
The Hardy norm appearing on the right-hand side of \eqref{eq:TLclaimH1} is basically the best which one can obtain by the general Fourier multiplier techniques employed in this paper, and \emph{cannot} be improved to be taken as the $\lebe^{1}$-norm. Specifically, as discussed by the authors in \cite[Ex. 2.2]{GLN}, if one takes $n=2$, $V=\R^{2\times 2}$ and $\B=\Curl$, then condition \eqref{eq:ellipticityreduced1} is tantamount to ellipticity of the differential operator $\Acal[\D u]$ acting on $u\in\hold_{c}^{\infty}(\R^{2};\R^{2})$. This is satisfied by $\Acal=\devsym$, but by \cite[Ex. 2.2]{GLN} the resulting KMS-inequality is false. 
\end{remark}
We finally provide a variant of Theorem \ref{thm:KMS-A} for open and bounded sets. Here, we stick to the Lebesgue scale for simplicity; based on the above arguments, it is clear that analogous results can be obtained for other space scales. 
\begin{corollary}\label{cor:boundesets}
Let $1<p<n$, $1<q\leq p^{*}$ and $\Omega\subset\R^{n}$ be open and bounded. Moreover, let the part map $\Acal$ and the $k$-th order differential operator $\B $ as defined in Section \ref{sec:genincompatibiltiies} satisfy \eqref{eq:ellipticityreduced}. Then there exists a constant $c=c(p,q,\Omega,\Acal,\B )>0$ such that we have 
\begin{align}\label{eq:scaled0}
\|P\|_{\sobo^{k-1,q}(\Omega)} \leq c\big(\|\Acal[P]\|_{\sobo^{k-1,q}(\Omega)} + \|\B P\|_{\lebe^{p}(\Omega)} \big) \qquad\text{for all}\; P\in\hold_{c}^{\infty}(\Omega;V). 
\end{align}
If $\Omega=\ball_{r}(x_{0})$ for some $x_{0}\in\R^{n}$ and $r>0$, then we have 
\begin{align}\label{eq:scaled1}
\Big(\dashint_{\ball_{r}(x_{0})}|\D^{k-1}P|^{q}\dif x \Big)^{\frac{1}{q}} \leq c\Big(\Big(\dashint_{\ball_{r}(x_{0})}|\D^{k-1}\Acal[P]|^{q}\dif x \Big)^{\frac{1}{q}} + r\Big(\dashint_{\ball_{r}(x_{0})}|\B P|^{p}\dif x \Big)^{\frac{1}{p}} \Big) 
\end{align}
for all $P\in\hold_{c}^{\infty}(\ball_{r}(x_{0});V)$ with a constant $c=c(p,q,\Acal,\B )>0$. 
\end{corollary}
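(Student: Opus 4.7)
My plan is to reduce both \eqref{eq:scaled0} and \eqref{eq:scaled1} to the whole-space inequality from Theorem~\ref{thm:KMS-A}, using extension by zero for the former and a dimensional rescaling for the latter. Given $P\in\hold_{c}^{\infty}(\Omega;V)$, I extend $P$ by zero to $\R^{n}$; since the extension still lies in $\hold_{c}^{\infty}(\R^{n};V)$ and $\spt P,\spt\Acal[P],\spt\B P\subset\overline{\Omega}$, the $\R^{n}$-norms coincide with the $\Omega$-norms for these three fields. To land in $\lebe^{q}$ on both derivative terms I invoke Theorem~\ref{thm:KMS-A} at the Sobolev predecessor $\widetilde{p}\coloneqq nq/(n+q)$, which satisfies $\widetilde{p}^{\ast}=q$ and, by the assumption $q\leq p^{\ast}$, fulfils $\widetilde{p}\leq p$. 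This yields
\begin{align*}
\|\D^{k-1}P\|_{\lebe^{q}(\Omega)}\leq c\bigl(\|\D^{k-1}\Acal[P]\|_{\lebe^{q}(\Omega)}+\|\B P\|_{\lebe^{\widetilde{p}}(\Omega)}\bigr),
\end{align*}
and H\"{o}lder's inequality on the bounded set $\Omega$ converts the last term into $c_{\Omega}\|\B P\|_{\lebe^{p}(\Omega)}$. Since every partial derivative $\partial^{\alpha}P$ with $|\alpha|\leq k-2$ is still compactly supported in $\Omega$, the iterated Poincar\'{e} inequality upgrades this top-order estimate to the full $\sobo^{k-1,q}(\Omega)$-norm, yielding \eqref{eq:scaled0}.

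\textbf{Rescaling.} For \eqref{eq:scaled1} I apply \eqref{eq:scaled0} with $\Omega=\ball_{1}(0)$, the associated constant depending only on $p,q,\Acal,\B$ and not on $r$ or $x_{0}$. Given $P\in\hold_{c}^{\infty}(\ball_{r}(x_{0});V)$, set $\widetilde{P}(y)\coloneqq P(x_{0}+ry)\in\hold_{c}^{\infty}(\ball_{1}(0);V)$. The chain rule gives $\D^{j}\widetilde{P}(y)=r^{j}\D^{j}P(x_{0}+ry)$ and $\B\widetilde{P}(y)=r^{k}\B P(x_{0}+ry)$; a change of variables then produces
\begin{align*}
\|\D^{j}\widetilde{P}\|_{\lebe^{q}(\ball_{1}(0))}=r^{j}|\ball_{1}(0)|^{1/q}\Bigl(\dashint_{\ball_{r}(x_{0})}|\D^{j}P|^{q}\,\dif x\Bigr)^{1/q},
\end{align*}
together with $\|\B\widetilde{P}\|_{\lebe^{p}(\ball_{1}(0))}=r^{k}|\ball_{1}(0)|^{1/p}(\dashint_{\ball_{r}(x_{0})}|\B P|^{p}\,\dif x)^{1/p}$. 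Inserting these into the unit-ball instance of \eqref{eq:scaled0} applied to $\widetilde{P}$, dividing by $r^{k-1}|\ball_{1}(0)|^{1/q}$ and absorbing the dimensional constant $|\ball_{1}(0)|^{1/p-1/q}$ produces \eqref{eq:scaled1}; the factor $r$ in front of the $\B P$-average records precisely the $r^{k}$ vs.\ $r^{k-1}$ scaling mismatch between $\B$ and $\D^{k-1}$.

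\textbf{Main obstacle.} The only delicate point is ensuring $\widetilde{p}=nq/(n+q)>1$, i.e.\ $q>n/(n-1)$, so that Theorem~\ref{thm:KMS-A} applies directly at exponent $\widetilde{p}$. For the remaining range $1<q\leq n/(n-1)$ I would instead run the algebraic split $P=\Pi_{\ker(\Acal)}[P]+\Pi_{\ker(\Acal)^{\bot}}[P]$ from the proof of Theorem~\ref{thm:KMS-A} directly on the zero extension of $P$ to $\R^{n}$: the pointwise bound $|\D^{\alpha}\Pi_{\ker(\Acal)^{\bot}}[P]|\leq c|\D^{\alpha}\Acal[P]|$ controls the $\ker(\Acal)^{\bot}$-valued component in any $\lebe^{q}$, whereas the $\ker(\Acal)$-valued component is estimated through Calder\'{o}n--Zygmund theory for the elliptic restriction of $\B$ to $\ker(\Acal)$-valued fields, combined with the bounded-domain embedding $\lebe^{p}(\Omega)\hookrightarrow\sobo^{-1,q}(\Omega)$ that is available for distributions supported in $\Omega$. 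The rescaling step transfers verbatim.
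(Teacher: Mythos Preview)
Your proof is correct. For $q>n/(n-1)$ you take a different route than the paper: rather than reopening the algebraic split from the proof of Theorem~\ref{thm:KMS-A}, you apply the theorem as a black box at the modified exponent $\widetilde{p}=nq/(n+q)\in(1,p]$ (so that $\widetilde{p}^{*}=q$) and then upgrade $\lebe^{\widetilde{p}}$ to $\lebe^{p}$ via H\"older on the bounded domain. The paper instead reruns the algebraic split uniformly in $q$, replacing $p^{*}$ by $q$ throughout the chain~\eqref{eq:centerestimate} and invoking the embedding $\lebe^{p}(\Omega)\hookrightarrow\sobo^{-1,q}(\Omega)$ (justified by Sobolev for $q>n/(n-1)$ and by John--Nirenberg otherwise). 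Your fallback for $1<q\le n/(n-1)$ is exactly this argument, so the two proofs coincide there. Your shortcut has the merit of keeping Theorem~\ref{thm:KMS-A} as a black box in the generic range; the paper's version avoids the case split in the main line but still needs the dichotomy at $q=n/(n-1)$ to justify the negative-order embedding. The rescaling step for~\eqref{eq:scaled1} is identical in substance; the paper compresses it to ``by scaling'' whereas you write out the change of variables.
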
 
\begin{proof} 
If $q=p^{*}$, this is immediate from Theorem \ref{thm:KMS-A} by extending $\hold_{c}^{\infty}(\Omega;V)$ trivially to $\R^{n}$. Hence let $1<q<p^{*}$. As in the proof of Theorem \ref{thm:KMS-A}, we split $P\in\hold_{c}^{\infty}(\Omega;V)$ as $P=\Pi_{(\ker\Acal)^{\bot}}[P] + \Pi_{\ker\Acal}[P]$. It is then clear that we have to only control the $\lebe^{q}(\Omega)$-norm of $\Pi_{\ker\Acal}[P]$. Extending $P$ trivially to $\R^{n}$, we replace the Sobolev exponent $p^{*}=\frac{np}{n-p}$ in $\eqref{eq:centerestimate}_{1}$--$\eqref{eq:centerestimate}_{4}$ by $q$. Since $\spt(\B P)\subset\Omega$, we only require $\lebe^{p}(\Omega;W)\hookrightarrow \sobo^{-1,q}(\Omega;W)$ to conclude, but this is a direct consequence of $q<p^{*}$ (if $q>\frac{n}{n-1}$, we use Sobolev's embedding theorem and the John-Nirenberg inequality otherwise). Now, \eqref{eq:scaled1} follows from \eqref{eq:scaled0} by scaling and the proof is complete. 
\end{proof} 
Clearly, the scaled variant \eqref{eq:scaled1} also holds for more general domains, but we confine ourselves to balls here for simplicity. The exponent restriction on $q$, however, is strict:
\begin{remark}\label{rem:exponentrestriction}
We cannot allow $q=1$ in the previous inequality in general. This is visible best in the situation where $\B =\Curl$, $V=\R^{n\times n}$ and so the $\B $-free fields are precisely gradient fields. Following Example \ref{rem:CurlGrad}, we may send $r\to\infty$ in \eqref{eq:scaled1} and recover the Korn-type inequality $\|\D u\|_{\lebe^{q}(\R^{n})}\leq c \|\Acal[\D u]\|_{\lebe^{q}(\R^{n})}$ for all $u\in\hold_{c}^{\infty}(\R^{n};\R^{n})$. It is well-known that this inequality only persists for $q=1$ if one has the pointwise inequality $|\!\D u|\leq c|\Acal[\D u]|$ (cf. \cite{Ornstein,KirchheimKristensen}), and the latter is clearly not the case subject to the sole assumption \eqref{eq:ellipticityreduced}. 
\end{remark}

\subsection{Generalised KMS inequalities of the second kind}\label{sec:KMSsecondkind}
Opposed to the inequalities studied in the previous paragraph, we now turn to generalised KMS inequalities on domains. In particular, we drop the assumption of our competitor maps vanishing (to some order) at the boundary. Our main result then is as follows: 
\begin{theoremA}[Generalised KMS-inequalities of the second kind]\label{thm:KMS-B}
Let $p>1$, $\mathbf{j}\in\mathbb{N}_{0}$, $k\in\mathbb{N}$ with $\mathbf{j}<k$ satisfy $(k-\mathbf{j})p<n$ and let $q\in(1,\frac{np}{n-(k-\mathbf{j})p}]$. Moreover, suppose that the part map $\Acal$ and the $k$-th order differential operator $\B$ are as in Section \ref{sec:genincompatibiltiies}. Then the following are equivalent:
 \begin{enumerate}
  \item\label{item:bdry1} There exists a finite dimensional subspace $\mathcal{K}$ of the $V$-valued polynomials such that for any open, bounded and connected subset $\Omega\subset\R^{n}$ with Lipschitz boundary $\partial\Omega$ there exists a constant $c=c(p,q,\Acal,\B,\Omega)>0$ such that the inequality
   \begin{equation}\label{eq:thm:KMS2}
      \min_{\Pi\in\mathcal{K}}\norm{\D^{\mathbf{j}}(P-\Pi)}_{\lebe^{q}(\Omega)}\le c\,\big(\norm{\D^{\mathbf{j}}\Acal[P]}_{\lebe^{q}(\Omega)}+\norm{\B P}_{\lebe^p(\Omega)}\big), 
   \end{equation}
   holds for all $P\in\hold^\infty(\overline{\Omega};V)$.
   \item\label{item:bdry2} $\B $ is \emph{reduced $\mathbb{C}$-elliptic} (relative to $\Acal$), meaning that 
   \begin{align}\label{eq:ellipticityreducedC}
\bigcup_{\xi\in\C^{n}\setminus\{0\}}\ker_{\mathbb{C}}(\Acal)\cap \ker_{\mathbb{C}}(\B[\xi]) =\{0\}. 
\end{align} 
 \end{enumerate}
 Here, we have set $\ker_{\mathbb{C}}(\Acal)=\{\boldsymbol v^{1}+\imag\boldsymbol  v^{2}\in V+\imag V\,|\;\Acal[\boldsymbol v^{1}]+\imag\Acal[\boldsymbol v^{2}]=0\}$. 
\end{theoremA}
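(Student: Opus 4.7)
\smallskip

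\textbf{Proof plan.} The strategy mirrors that of Theorem~\ref{thm:KMS-A}: perform the algebraic split $P = \Pi_{\ker(\Acal)}[P] + \Pi_{\ker(\Acal)^{\bot}}[P]=:Q+R$ and treat the two summands by qualitatively different means. The orthogonal complement part $R$ is elementary: since $\Acal|_{\ker(\Acal)^\bot}$ is injective, Lemma~\ref{lem:aux} gives the pointwise bound $|\partial^\alpha R|\leq c\,|\partial^\alpha \Acal[P]|$ for every multi-index $\alpha$, exactly as in \eqref{eq:splotter}, so that in particular $\|\D^{\mathbf j}R\|_{\lebe^q(\Omega)}\leq c\,\|\D^{\mathbf j}\Acal[P]\|_{\lebe^q(\Omega)}$. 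Everything else concentrates on the $\ker(\Acal)$-valued summand $Q$.

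\smallskip

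\textbf{Sufficiency \ref{item:bdry2}$\Rightarrow$\ref{item:bdry1}.} Condition~\eqref{eq:ellipticityreducedC} asserts exactly that $\B$, viewed as a $k$-th order operator $\hold^\infty(\overline{\Omega};\ker(\Acal))\to\hold^\infty(\overline{\Omega};W)$, is $\C$-elliptic (cf.~\eqref{eq:Cellinterpretation} and \eqref{eq:Breduced}). By Lemma~\ref{lem:Cellipt}\ref{item:kernel}, its null-space
\[
\mathcal{K}\coloneqq\{Q\in\mathscr{D}'(\Omega;\ker(\Acal))\mid \B Q=0\}
\]
is a finite-dimensional space of $V$-valued polynomials, independent of $\Omega$. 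This is the correction subspace claimed in \ref{item:bdry1}. The $\C$-elliptic Korn-type second-kind inequality on connected Lipschitz domains, as supplied by \cite{Kalamajska,DieningGmeineder}, then yields
\[
\inf_{\Pi\in\mathcal{K}}\|Q-\Pi\|_{\sobo^{k,p}(\Omega)}\leq c\,\|\B Q\|_{\lebe^p(\Omega)}.
\]
Under the scaling hypotheses $(k-\mathbf j)p<n$ and $q\leq\tfrac{np}{n-(k-\mathbf j)p}$, the Sobolev embedding $\sobo^{k-\mathbf j,p}(\Omega)\hookrightarrow\lebe^q(\Omega)$ upgrades this to
\[
\inf_{\Pi\in\mathcal{K}}\|\D^{\mathbf j}(Q-\Pi)\|_{\lebe^q(\Omega)}\leq c\,\|\B Q\|_{\lebe^p(\Omega)}.
\]
To write $\B Q=\B P-\B R$ and absorb the $\B R$-term I would pass to a negative-order variant of the Korn inequality and invoke the dual embedding $\lebe^p(\Omega)\hookrightarrow\sobo^{\mathbf j-k,q}(\Omega)$ available in the same range of $q$, together with the pointwise bound $|\B R|\leq c\,|\D^k\Acal[P]|$ supplied by Lemma~\ref{lem:aux} applied at order $k$; testing against dual maps and integrating by parts $k-\mathbf j$ times transfers the $k-\mathbf j$ unwanted derivatives onto the test function, converting $\|\D^k\Acal[P]\|_{\lebe^p(\Omega)}$ bounds into the desired $\|\D^{\mathbf j}\Acal[P]\|_{\lebe^q(\Omega)}$ bounds. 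Combining with the trivial estimate on $\D^{\mathbf j}R$ yields \eqref{eq:thm:KMS2}.

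\smallskip

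\textbf{Necessity \ref{item:bdry1}$\Rightarrow$\ref{item:bdry2}.} Argue by contraposition. If \eqref{eq:ellipticityreducedC} fails, pick $\xi=\xi^1+\imag\xi^2\in\C^n\setminus\{0\}$ and $\boldsymbol v=\boldsymbol v^1+\imag\boldsymbol v^2\in\ker_{\C}(\Acal)\cap\ker_{\C}(\B[\xi])\setminus\{0\}$. Then the real and imaginary parts of $x\mapsto e^{\imag\langle\xi,x\rangle}\boldsymbol v$, together with their translates, furnish an infinite-dimensional family of real-analytic, $\ker(\Acal)$-valued maps on $\Omega$ that are annihilated by $\B$ (equivalently, by Lemma~\ref{lem:Cellipt}\ref{item:kernel} applied to the restricted operator, the null-space of $\B$ on $\ker(\Acal)$-valued distributions is infinite-dimensional). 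No finite-dimensional polynomial subspace $\mathcal{K}$ can contain this family: inserting a suitable map from outside $\mathcal{K}$ into \eqref{eq:thm:KMS2} produces a strictly positive left-hand side and a vanishing right-hand side, the desired contradiction.

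\smallskip

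\textbf{Main obstacle.} The sharp point is the sufficiency: the split argument generates a spurious term $\B R$ that the pointwise bound controls only by $\D^k\Acal[P]$, whereas the claimed right-hand side of \eqref{eq:thm:KMS2} features only $\D^{\mathbf j}\Acal[P]$. Bridging this $(k-\mathbf j)$-derivative gap is exactly where the negative-order $\C$-elliptic Korn inequality and the dual Sobolev embedding into $\sobo^{\mathbf j-k,q}(\Omega)$ must be combined with care; this is precisely the step that forces the range restriction $q\leq \tfrac{np}{n-(k-\mathbf j)p}$ and distinguishes the domain case from the entire-space case of Theorem~\ref{thm:KMS-A}.
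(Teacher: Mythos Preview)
Your overall architecture matches the paper's: the same algebraic split $P=Q+R$, the pointwise bound on $R$ via Lemma~\ref{lem:aux}, the observation that reduced $\C$-ellipticity is precisely $\C$-ellipticity of $\B$ restricted to $\ker(\Acal)$-valued maps, and the same necessity argument (you phrase it contrapositively via exponential solutions, the paper phrases it directly via Lemma~\ref{lem:Cellipt}\ref{item:kernel}; these are equivalent).

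The genuine gap is in your sufficiency argument, and it is exactly the step you flag in your ``Main obstacle'' paragraph. You first invoke a \emph{positive}-order Korn inequality $\inf_{\Pi\in\mathcal{K}}\|Q-\Pi\|_{\sobo^{k,p}(\Omega)}\leq c\|\B Q\|_{\lebe^p(\Omega)}$ from \cite{Kalamajska,DieningGmeineder}, which leaves you with $\|\B R\|_{\lebe^p}$ controlled only by $\|\D^{k}\Acal[P]\|_{\lebe^p}$. You then say you would ``pass to a negative-order variant'' of this Korn inequality, namely something like
\[
\inf_{\Pi\in\mathcal{K}}\|\D^{\mathbf j}(Q-\Pi)\|_{\lebe^q(\Omega)}\leq c\,\|\B Q\|_{\sobo^{\mathbf j-k,q}(\Omega)},
\]
after which your integration-by-parts idea for $\B R$ does work and the dual embedding $\lebe^p\hookrightarrow\sobo^{\mathbf j-k,q}$ closes the argument. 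But this negative-order inequality is not a black-box consequence of the positive-order one, and the references you cite do not supply it. It is precisely the heart of the matter.

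The paper establishes this negative-order estimate from scratch: via Lemma~\ref{lem:Cellipt}\ref{item:expression} one writes $\D^{k+\ell}=\mathbb{L}\circ\B$ on $\ker(\Acal)$-valued maps, feeds this into the Ne\v{c}as--Lions Lemma~\ref{lem:LionsNecas_k} to obtain an a~priori estimate with a compact lower-order remainder $\|\D^{\mathbf j}Q\|_{\sobo^{-1,q}}$, and then removes that remainder by a Peetre--Tartar type compactness/contradiction argument (this is where the finite-dimensionality of $\mathcal{K}$ and the connectedness of $\Omega$ enter). Your plan becomes a complete proof once you replace the unsupported appeal to a ``negative-order variant'' by this Ne\v{c}as--Lions plus compactness step.
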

The notion of \emph{reduced} $\mathbb{C}$-ellipticity as displayed in Theorem \ref{thm:KMS-B} is actually already implicitly contained in the very general situation considered by \textsc{Smith} \cite{Smith}, and we only use this terminology to stress its difference to the (full) $\mathbb{C}$-ellipticity.
\begin{remark}
 For $\B=\Curl$ the condition \eqref{eq:ellipticityreducedC} is equivalent to $\Acal[\D\boldsymbol\cdot]$ being a $\C$-elliptic operator.
\end{remark}

Other space scales are equally possible, but we stick to the present formulation of Theorem \ref{thm:KMS-B} for ease of exposition. For its proof, we recall the following 
\begin{lemma}[Ne\v{c}as-Lions estimate, \cite{Necas}] \label{lem:LionsNecas_k}
 Let $\Omega\subset\R^n$ be a bounded  Lipschitz  domain, $m \in \Z$ and $1<q<\infty$. Denote by  $\D^l f$ the collection of all distributional derivatives of order $l$. Then $f \in  \mathscr{D}'(\Omega;\R^d)$ and $\D^l f \in \sobo^{m-l,\,q}(\Omega;\SLin_{l}(\R^{n};\R^{d}))$ imply
$f \in \sobo^{m,\,q}(\Omega;\R^d)$.
Moreover, 
\begin{equation}  \label{eq:necask_m_q}
 \norm{f}_{\sobo^{m,\,q}(\Omega)} \le c\,\big(\norm{f}_{\sobo^{m-1,\,q}(\Omega)} +
 \norm{\D^l f }_{\sobo^{m-l,\,q}(\Omega)}\big),
\end{equation}
with a constant $c=c(m,q,d,\Omega)>0$.
\end{lemma}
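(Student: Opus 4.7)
The plan is to prove the general statement by induction on $l\geq 1$, with the base case $l=1$ being the classical Nečas-Lions inequality. Assuming the lemma holds for $l-1$ and every integer $m$, consider $f\in\mathscr{D}'(\Omega;\R^{d})$ with $\D^{l}f\in\sobo^{m-l,q}$. The distribution $g\coloneqq \D^{l-1}f\in\mathscr{D}'(\Omega;\SLin_{l-1}(\R^{n};\R^{d}))$ has $\nabla g=\D^{l}f\in\sobo^{(m-l+1)-1,q}$, so the $l=1$ case applied to $g$ with $m$ replaced by $m-l+1$ yields $\D^{l-1}f\in\sobo^{m-l+1,q}(\Omega)$, together with the bound
\begin{equation*}
\|\D^{l-1}f\|_{\sobo^{m-l+1,q}(\Omega)}\leq c\bigl(\|\D^{l-1}f\|_{\sobo^{m-l,q}(\Omega)}+\|\D^{l}f\|_{\sobo^{m-l,q}(\Omega)}\bigr).
\end{equation*}
The inductive hypothesis applied to $f$ with data $(l-1,m)$ then delivers $f\in\sobo^{m,q}(\Omega)$; chaining the constants along the telescoping sequence of intermediate orders gives \eqref{eq:necask_m_q}.

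For the base case $l=1$, one must show that $f\in\mathscr{D}'(\Omega;\R^{d})$ together with $\nabla f\in\sobo^{m-1,q}(\Omega)$ imply $f\in\sobo^{m,q}(\Omega)$, quantitatively with the claimed two-term estimate. My approach would be to localize via a finite atlas of Lipschitz boundary charts and a subordinate smooth partition of unity $\{\chi_{j}\}$, treating each piece $\chi_{j}f$ separately. For the interior contribution $\chi_{0}f$, trivial extension reduces the problem to $\R^{n}$, where the Fourier multiplier with (vector) symbol $-\imag\xi/|\xi|^{2}$ is a right inverse of the gradient; its components are $\hold^{\infty}$ off zero and homogeneous of degree $-1$, so by the Mihlin-Hörmander theorem in the form of Lemma \ref{lem:Triebel}\ref{item:Triebel1} the associated operator maps $\sobo^{m-1,q}(\R^{n})\to\sobo^{m,q}(\R^{n})$ boundedly for every $m\in\Z$ and $1<q<\infty$. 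Boundary patches are flattened via bi-Lipschitz charts and handled by even reflection across $\{x_{n}=0\}$, which preserves integer-order Sobolev regularity.

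The delicate point, and the reason the statement is non-trivial, is the interaction between the cut-offs $\chi_{j}$ and negative-order norms: the Leibniz rule $\nabla(\chi_{j}f)=\chi_{j}\nabla f+f\,\nabla\chi_{j}$ produces a commutator term $f\,\nabla\chi_{j}$ which must be controlled by $\|f\|_{\sobo^{m-1,q}}$, and for $m\leq 0$ even making sense of $\chi_{j}f\in\sobo^{m-1,q}(\R^{n})$ is not automatic. A cleaner route that side-steps this is the duality argument based on Bogovskii's operator: its boundedness as a right inverse of the divergence $\div\colon\sobo^{1-m,q'}_{0}(\Omega;\R^{n})\to \sobo^{-m,q'}_{0}(\Omega)/\R$ on bounded Lipschitz domains transposes precisely to the desired negative-order estimate on $f$ by $\nabla f$. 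Since the base case is entirely classical, one may alternatively conclude by simply invoking \cite{Necas} for $l=1$ and combining it with the induction sketched above.
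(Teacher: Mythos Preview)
The paper does not supply a proof of this lemma at all: it is stated with a bare citation to \textsc{Ne\v{c}as} \cite{Necas} and then used as a black box in the proof of Theorem~\ref{thm:KMS-B}. So there is no argument in the paper to compare your proposal against.

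That said, your reduction of the general $l$ to the classical $l=1$ case is correct and is the standard way to derive the higher-order statement from the original Ne\v{c}as lemma. The induction closes because once $\D^{l-1}f\in\sobo^{m-l+1,q}(\Omega)$ is established qualitatively, the intermediate term $\|\D^{l-1}f\|_{\sobo^{m-l,q}(\Omega)}$ appearing after chaining is absorbed into $\|f\|_{\sobo^{m-1,q}(\Omega)}$ via the continuity of $\D^{l-1}\colon\sobo^{m-1,q}(\Omega)\to\sobo^{m-l,q}(\Omega)$; this is the content of your ``telescoping'' remark and it is valid for all $m\in\Z$.

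Your discussion of the base case is honest about where the difficulty lies (commutators with cut-offs in negative-order norms, and the behaviour of reflection for $m\leq 0$), and the Bogovski\u{\i} duality route you mention is indeed the cleanest modern proof. Since the paper itself is content to cite \cite{Necas}, your final option---invoke the classical $l=1$ result and run the induction---is entirely in keeping with how the lemma is used here.
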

The second ingredient is 
\begin{lemma}\label{lem:AB}
Let $\Acal$ and $\B$ as in Section \ref{sec:genincompatibiltiies}. Then the following are equivalent:
\begin{enumerate}
\item\label{item:CellConc1} Condition \eqref{eq:ellipticityreducedC} holds.
\item\label{item:CellConc2} There exist $M\in\N_0$ and an isomorphism $\T\colon\R^{M}\to\ker\Acal$ such that the operator $\hold^{\infty}(\R^{n};\R^{M})\ni a \mapsto \B\T a $ is $\mathbb{C}$-elliptic. 
\item\label{item:CellConc3} There exists $M\in\N_0$ such that, for any isomorphism $\T\colon\R^{M}\to\ker\Acal$, the operator $\hold^{\infty}(\R^{n};\R^{M})\ni a \mapsto \B\T a $ is $\mathbb{C}$-elliptic. 
\end{enumerate}
\end{lemma}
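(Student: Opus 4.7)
The plan is to close the cycle \ref{item:CellConc3}$\Rightarrow$\ref{item:CellConc2}$\Rightarrow$\ref{item:CellConc1}$\Rightarrow$\ref{item:CellConc3}, where the first implication is trivial (pick any isomorphism, which exists once one sets $M\coloneqq \dim_{\R}\ker(\Acal)$) and the other two rely on a single algebraic observation. Namely, since $\Acal\colon V\to\widetilde{V}$ is a map between \emph{real} inner product spaces, the equality $\Acal[\boldsymbol v^{1}]+\imag\Acal[\boldsymbol v^{2}]=0$ decouples into $\Acal[\boldsymbol v^{1}]=\Acal[\boldsymbol v^{2}]=0$, so that the complex kernel factors as
\begin{equation*}
\ker_{\C}(\Acal)=\ker(\Acal)+\imag\,\ker(\Acal).
\end{equation*}
Consequently, any real-linear isomorphism $\T\colon\R^{M}\to\ker(\Acal)$ extends by complex linearity to a complex-linear isomorphism $\T_{\C}\colon\C^{M}\to\ker_{\C}(\Acal)$, and $\B[\xi]\circ\T_{\C}$ is precisely the symbol of the operator $a\mapsto\B\T a$ evaluated at $\xi\in\C^{n}$.

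For \ref{item:CellConc2}$\Rightarrow$\ref{item:CellConc1}, I would argue by contraposition. Suppose there exist $\xi\in\C^{n}\setminus\{0\}$ and $\boldsymbol v=\boldsymbol v^{1}+\imag\boldsymbol v^{2}\in\bigl(\ker_{\C}(\Acal)\cap\ker_{\C}(\B[\xi])\bigr)\setminus\{0\}$. By the displayed identity, $\boldsymbol v^{1},\boldsymbol v^{2}\in\ker(\Acal)$, and so there is a unique $a\in\C^{M}\setminus\{0\}$ with $\T_{\C}a=\boldsymbol v$. Then
\begin{equation*}
(\B\T)[\xi]\,a=\B[\xi]\,\T_{\C}a=\B[\xi]\,\boldsymbol v=0,
\end{equation*}
contradicting the $\C$-ellipticity of $\B\T$ (cf.\ \eqref{eq:Cellinterpretation}). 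Conversely, for \ref{item:CellConc1}$\Rightarrow$\ref{item:CellConc3}, fix any isomorphism $\T\colon\R^{M}\to\ker(\Acal)$ with $M=\dim_{\R}\ker(\Acal)$. For $\xi\in\C^{n}\setminus\{0\}$ and $a\in\C^{M}$ with $(\B\T)[\xi]a=0$, set $\boldsymbol v\coloneqq \T_{\C}a\in\ker_{\C}(\Acal)\cap\ker_{\C}(\B[\xi])$; hypothesis \eqref{eq:ellipticityreducedC} forces $\boldsymbol v=0$, and the injectivity of $\T_{\C}$ yields $a=0$.

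The argument involves no serious obstacle once the decoupling $\ker_{\C}(\Acal)=\ker(\Acal)+\imag\ker(\Acal)$ is recognised; the only care needed is to keep the real and complex structures straight, in particular to view $\T_{\C}$ as a complex-linear extension (as opposed to a new real-linear map on $\R^{2M}$) so that the identification of symbols $(\B\T)[\xi]=\B[\xi]\circ\T_{\C}$ holds for all $\xi\in\C^{n}$. Since the characterisations \ref{item:CellConc2} and \ref{item:CellConc3} do not depend on the particular isomorphism $\T$ used, the proof also shows that reduced $\C$-ellipticity of $\B$ relative to $\Acal$ is an intrinsic property of the pair $(\Acal,\B)$, which is the form in which the condition will subsequently be invoked in the proof of Theorem~\ref{thm:KMS-B}.
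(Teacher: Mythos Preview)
Your proof is correct and follows essentially the same approach as the paper: both arguments hinge on the observation that $\ker_{\C}(\Acal)=\ker(\Acal)+\imag\ker(\Acal)$ and that any real isomorphism $\T\colon\R^{M}\to\ker(\Acal)$ complexifies to an isomorphism $\T_{\C}\colon\C^{M}\to\ker_{\C}(\Acal)$ intertwining the symbols. The paper runs the cycle in the order \ref{item:CellConc1}$\Rightarrow$\ref{item:CellConc2}$\Rightarrow$\ref{item:CellConc3}$\Rightarrow$\ref{item:CellConc1} (using a composition trick $\B\T_{1}=\B\T(\T^{-1}\T_{1})$ for \ref{item:CellConc2}$\Rightarrow$\ref{item:CellConc3}) rather than your \ref{item:CellConc3}$\Rightarrow$\ref{item:CellConc2}$\Rightarrow$\ref{item:CellConc1}$\Rightarrow$\ref{item:CellConc3}, but this is a purely cosmetic difference.
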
 
\begin{proof}
We first establish '\ref{item:CellConc1}$\Rightarrow$\ref{item:CellConc2}'. Let $M\coloneqq\dim(\ker\Acal)$ and choose an arbitrary $\R$-linear isomorphism $\T\colon\R^{M}\to\ker\Acal$. Let $\mathbb{D}=\B\T$ and suppose that, for $\boldsymbol v=\boldsymbol v^{1}+\imag \boldsymbol v^{2}\in \C^{M}$ and $\xi\in\mathbb{C}^{n}\setminus\{0\}$, we have 
\begin{align}\label{eq:writedowncondition}
0 = \sum_{|\alpha|=k}\xi^{\alpha}\B_{\alpha}\T \boldsymbol v^{1} + \imag\sum_{|\alpha|=k}\xi^{\alpha}\B_{\alpha}\T \boldsymbol v^{2} 
\end{align}
as in \eqref{eq:Cellinterpretation}. Then $\T \boldsymbol v^{1}+\imag\T \boldsymbol v^{2}\in\ker_{\mathbb{C}}(\Acal)\cap \ker_{\C}(\B[\xi])$, so that $\T \boldsymbol v^{1}+\imag\T \boldsymbol v^{2}=0$ by \ref{item:CellConc1}. Since $\T$ is an isomorphism, we conclude $\boldsymbol v^{1},\boldsymbol v^{2}=0$ and hence $\mathbb{D}$ is $\mathbb{C}$-elliptic. Ad \ref{item:CellConc2}$\Rightarrow$\ref{item:CellConc3}. If $\T$ is the isomorphism provided \ref{item:CellConc2} and $\T_{1}$ another such isomorphism, we write $\B\T_{1}=\B\T(\T^{-1}\T_{1})$, from where \ref{item:CellConc3} follows at once. Ad \ref{item:CellConc3}$\Rightarrow$\ref{item:CellConc1}. Put $M\coloneqq\dim(\ker\Acal)$ and suppose that there exists an isomorphism $\T\colon\R^{M}\to\ker\Acal$ for which $\mathbb{D}\coloneqq\B\T$ is not $\mathbb{C}$-elliptic. Then there exists $\xi\in\mathbb{C}^{n}\setminus\{0\}$ and $\boldsymbol v=\boldsymbol v^{1}+\imag \boldsymbol v^{2}\in \mathbb{C}^{M}\setminus\{0\}$ for which \eqref{eq:writedowncondition} holds. It then suffices to note that $0\neq \T \boldsymbol v=\T\boldsymbol v^{1}+\imag \T\boldsymbol v^{2}$ belongs to the union on the left-hand side of \eqref{eq:ellipticityreducedC}. Thus, \ref{item:CellConc1} follows and the proof is complete. 
\end{proof}

\begin{proof}[Proof of Theorem \ref{thm:KMS-B}]
We begin by establishing that \ref{item:bdry2} implies \ref{item:bdry1}. Let $P\in\hold^{\infty}(\overline{\Omega};V)$ and consider the decomposition $P=\Pi_{\ker\Acal}[P]+\Pi_{(\ker\Acal)^\bot}[P]$. Now fix a parametrising isomorphism $\T:\R^M\to\ker\Acal$ as in Lemma \ref{lem:AB}, so that we may write $P=\Pi_{(\ker\Acal)^\bot}[P]+\T a$ for some suitable $a\in\hold^\infty(\overline{\Omega};\R^M)$. By our assumption \eqref{eq:ellipticityreducedC}, the operator $a\mapsto\B\T a$ is $\C$-elliptic by Lemma \ref{lem:AB}, whereby Lemma \ref{lem:Cellipt} \ref{item:expression} implies the existence of a number $\ell\in\N_0$ and a linear, homogeneous,  constant coefficient differential operator $\mathbb{L}$ of order $\ell$ such that
\begin{equation}\label{eq:Hilbert}
 \D^{k+\ell}(\T \varphi)= (\mathbb{L}\B)(\T \varphi)
\end{equation}
holds for all $\varphi\in\hold^{\infty}(\overline{\Omega};\R^{M})$. Applying this to $\varphi=a$, we then obtain for an arbitrary $\beta\in\N_{0}^{n}$ with $|\beta|\leq k-1$
\begin{align}\label{eq:lmteablt0}
\begin{split}
 \norm{\D^{k+\ell}\partial^{\beta}\Pi_{\ker\Acal}[P]&}_{\sobo^{-k-\ell,q}(\Omega)}= \norm{\partial^{\beta}\D^{k+\ell}(\T a)}_{\sobo^{-k-\ell,q}(\Omega)}\\
& \!\!\!\stackrel{\eqref{eq:Hilbert}}{=} \norm{\partial^{\beta}\mathbb{L}\mathbb{B}(\T a)}_{\sobo^{-k-\ell,q}(\Omega)} \\ & = \norm{\mathbb{L}(\partial^{\beta}\B)(\T a)}_{\sobo^{-k-\ell,q}(\Omega)}\\
 &\le c\, \norm{\partial^{\beta}\B\T a}_{\sobo^{-k,q}(\Omega)}\\
  &\le c\,\big( \norm{\partial^{\beta}\B P}_{\sobo^{-k,q}(\Omega)}+\norm{\B\partial^{\beta}\Pi_{(\ker\Acal)^\bot}[P]}_{\sobo^{-k,q}(\Omega)}\big) \\ & \le c\,\big(\norm{\B P}_{\sobo^{-k+|\beta|,q}(\Omega)}+ \norm{\partial^{\beta}\Pi_{(\ker\Acal)^\bot}[P]}_{\lebe^{q}(\Omega)}\big), 
\end{split}
\end{align}
where the constant $c>0$ only depends on $\Acal,\B $ and $q$ (and hence implicitly on $\mathbb{L},\ell$ and $k$). Invoking the Ne\v{c}as-Lions Lemma \ref{lem:LionsNecas_k}, we consequently arrive at 
\begin{align}\label{eq:nonameequation}
 \norm{\partial^{\beta}\Pi_{\ker\Acal}[P]}_{\lebe^q(\Omega)}&\leq c\,\big(\norm{\partial^{\beta}\Pi_{\ker\Acal}[P]}_{\sobo^{-1,q}(\Omega)}+\norm{\D^{k+\ell}\partial^{\beta}\Pi_{\ker\Acal}[P]}_{\sobo^{-k-\ell,q}(\Omega)}\big) \notag\\
 &\overset{\mathclap{\eqref{eq:lmteablt0}}}{\le} \ c\,\Big( \norm{\partial^{\beta}\Pi_{\ker\Acal}[P]}_{\sobo^{-1,q}(\Omega)}+\norm{\partial^{\beta}\Pi_{(\ker\Acal)^\bot}[P]}_{\lebe^{q}(\Omega)} \Big. \notag\\
 & \Big. \;\;\;\; +\norm{\B P}_{\sobo^{-k+|\beta|,q}(\Omega)} \Big)
\end{align}
and thus, summing over all multi-indices $\beta\in\N_{0}^{n}$ with $|\beta|\coloneqq \mathbf{j}\leq k-1$, conclude
\begin{align}\label{eq:fuerganzP}
\begin{split}
 \norm{\D^{\mathbf{j}}P}_{\lebe^{q}(\Omega)}&\le c\Big(\,\norm{\D^{\mathbf{j}}\Pi_{\ker\Acal}[P]}_{\sobo^{-1,q}(\Omega)}+\norm{\D^{\mathbf{j}}\Pi_{(\ker\Acal)^\bot}[P]}_{\lebe^{q}(\Omega)}\Big. \\ & \Big. \;\;\;\;+\norm{\B P}_{\sobo^{-k+\mathbf{j},q}(\Omega)}\Big).
\end{split} 
\end{align}
We now define
\begin{align}\label{eq:defK}
\begin{split}
 \mathcal{K}&\coloneqq\{\Pi\in \lebe^q(\Omega;V)\mid \Pi_{(\ker\Acal)^{\bot}}[\Pi]=0 \text{ a.e. and } \B \Pi = 0\;\text{in}\;\mathscr{D}'(\Omega;W)\}\\
 &=\{\T a \in \lebe^q(\Omega;V) \mid \B\T a = 0\;\text{in}\;\mathscr{D}'(\Omega;W)\} .\end{split}
\end{align}
Since $\B\circ\T$ is a $\C$-elliptic operator and $\Omega$ is connected, we obtain from Lemma \ref{lem:Cellipt} \ref{item:kernel} that $m_{0}\coloneqq \dim \mathcal{K}<\infty$ and, in particular, $\mathcal{K}$ is contained in a fixed finite dimensional subspace of polynomials. We now claim that we have
\begin{equation}\label{eq:mainclaimboundary0}
 \min_{\Pi\in\mathcal{K}}\norm{\D^{\mathbf{j}}(P-\Pi)}_{\lebe^{q}(\Omega)}\le c\,\big(\norm{\D^{\mathbf{j}}\Acal[P]}_{\lebe^{q}(\Omega)}+\norm{\B P}_{\sobo^{-k+\mathbf{j},q}(\Omega)}\big), 
\end{equation}
for which, by a similar argument as in \eqref{eq:splotter}, it suffices to establish 
\begin{equation}\label{eq:mainclaimboundary}
 \min_{\Pi\in\mathcal{K}}\norm{\D^{\mathbf{j}}(P-\Pi)}_{\lebe^q(\Omega)}\le c\,\big(\norm{\D^{\mathbf{j}}\Pi_{(\ker\Acal)^{\bot}}[P]}_{\lebe^q(\Omega)}+\norm{\B P}_{\sobo^{-k+\mathbf{j},q}(\Omega)}\big).
\end{equation}
To see \eqref{eq:mainclaimboundary}, we first note that that $\mathcal{K}\subset\lebe^{2}(\Omega;V)$ and $\D^{\mathbf{j}}\mathcal{K}\subset\lebe^{2}(\Omega;\SLin_{\mathbf{j}}(\R^{n};V))$ so that we may choose an orthonormal basis $\{\mathbf{e}_{1},\ldots,\mathbf{e}_{m}\}$, $m\leq m_{0}$, of $\D^{\mathbf{j}}\mathcal{K}$ for the usual $\lebe^{2}$-inner product $\skalarProd{\boldsymbol \cdot}{\boldsymbol \cdot}_{\lebe^{2}(\Omega)}$; it is then easy to see that we have $\mathbf{e}_{j}=\D^{\mathbf{j}}\mathbf{f}_{j}$ for all $j\in\{1,\ldots,m\}$ for suitable $\mathbf{f}_{1},\ldots,\mathbf{f}_{m}\in\mathcal{K}$. We then put
\begin{align*}
(\D^{\mathbf{j}}\mathcal{K})^{\bot}\coloneqq \Big\{\D^{\mathbf{j}}\Pi\,|\;\Pi\in\sobo^{\mathbf{j},1}(\Omega;V)\;\text{and}\; \skalarProd{\D^{\mathbf{j}}\Pi}{\mathbf{e}_{j}}_{\lebe^{2}(\Omega)}=0\quad\text{for all}\;j\in\{1,\ldots,m \} \Big\}
\end{align*}
and subsequently claim that there exists a constant $c>0$ such that 
\begin{align}\label{eq:contraclaim}
\begin{split}
\norm{\D^{\mathbf{j}}P}_{\lebe^{q}(\Omega)}\leq c\Big(\norm{\D^{\mathbf{j}}\Pi_{(\ker\Acal)^{\bot}}[P]|}_{\lebe^{q}(\Omega)} & + \norm{\B P}_{\sobo^{-k+\mathbf{j},q}(\Omega)}\Big. \\ & \Big. +\sum_{j=1}^{m}\Big\vert\int_{\Omega}\skalarProd{\D^{\mathbf{j}}P}{\mathbf{e}_{j}}\,\dif x\Big\vert \Big) 
\end{split}
\end{align}
holds for all $P\in\hold^{\infty}(\overline{\Omega};V)$. To this end, we assume towards a contradiction that \eqref{eq:contraclaim} does \emph{not} hold. We then are provided with a sequence $(P_{i})_{i\in\N}\subset\hold^{\infty}(\overline{\Omega};V)$ such that 
\begin{align}\label{eq:soundofwater}
\begin{split}
&\norm{\D^{\mathbf{j}}P_{i}}_{\lebe^{q}(\Omega)}=1\qquad\text{and}\\
&\norm{\D^{\mathbf{j}}\Pi_{(\ker\Acal)^{\bot}}[P_{i}]}_{\lebe^{q}(\Omega)}+ \norm{\B P_{i}}_{\sobo^{-k+\mathbf{j},q}(\Omega)}+\sum_{j=1}^{m}\Big\vert\int_{\Omega}\skalarProd{\D^{\mathbf{j}}P_{i}}{\mathbf{e}_{j}}\dif x\Big\vert\, < \frac{1}{i}
\end{split}
\end{align}
for all $i\in\N$. Since we assume $1<q<\infty$, the Banach-Alaoglu theorem provides us with (a here non-relabeled) subsequence and some $Q\in\lebe^{q}(\Omega;\SLin_{\mathbf{j}}(\R^{n};V))$ such that $\D^{\mathbf{j}}P_{i}\rightharpoonup Q$ in $\lebe^{q}(\Omega;\SLin_{\mathbf{j}}(\R^{n};V))$ as $i\to\infty$. Note that we may write $Q=\D^{\mathbf{j}}P$ for some $P\in\sobo^{\mathbf{j},q}(\Omega;V)$. In fact, by connectedness of $\Omega$, $\partial\Omega$ being Lipschitz, and hereafter the Poincar\'{e} inequality, for each $i\in\N$, there exists a polynomial $\mathbf{P}_{i}\colon\Omega\to V$ such that, for some constant $c>0$ independent of $i$, 
\begin{align*} 
\D^{\mathbf{j}}\mathbf{P}_{i} = 0\;\;\;\text{and}\;\;\;\sum_{|\gamma|\leq \mathbf{j}-1} \|\partial^{\gamma}(P_{i}-\mathbf{P}_{i})\|_{\lebe^{q}(\Omega)}\leq c \|\D^{\mathbf{j}}P_{i}\|_{\lebe^{q}(\Omega)}. 
\end{align*} 
We then conclude that $(F_{i})_{i\in\N}\coloneqq (P_{i}-\mathbf{P}_{i})_{i\in\N}$ is bounded in $\sobo^{\mathbf{j},q}(\Omega;V)$, whereby we may pass to another subsequence $(F_{i_{l}})_{l\in\N}$ such that, for some $F\in\sobo^{\mathbf{j},q}(\Omega;V)$,  we have $F_{i_{l}}\rightharpoonup F$ in $\sobo^{\mathbf{j},q}(\Omega;V)$ as $l\to\infty$. Then we especially have $\D^{\mathbf{j}}P_{i_{l}}=\D^{\mathbf{j}}F_{i_{l}}\rightharpoonup\D^{\mathbf{j}}F$ weakly in $\lebe^{q}(\Omega;\SLin_{\mathbf{j}}(\R^{n};V))$. Since $\D^{\mathbf{j}}P_{i_{l}}\rightharpoonup Q$ weakly in $\lebe^{q}(\Omega;\SLin_{\mathbf{j}}(\R^{n};V))$, we conclude $Q=\D^{\mathbf{j}}F$ by uniqueness of weak limits. We set $P\coloneqq F$ in the sequel. 

The convergence $\D^{\mathbf{j}}P_{i}\rightharpoonup \D^{\mathbf{j}}P$ in  $\lebe^{q}(\Omega;\SLin_{\mathbf{j}}(\R^{n};V))$ already suffices to conclude $\B P_{i}\stackrel{*}{\rightharpoonup} \B P$ in $\sobo^{-k+\mathbf{j},q}(\Omega;W)$. Indeed, let $\varphi\in\sobo_{0}^{k-\mathbf{j},q'}(\Omega;W)$. We write $\B P=B[\D^{k}P]$ with some $B\in\mathscr{L}(\SLin_{k}(\R^{n};V);W)$. 
Since $\B $ is homogeneous of order $k$ and $\mathbf{j}\leq k-1$, we have $\B \mathbf{P}_{i}=0$ in $\Omega$ and thus, recalling $\mathrm{div}_{k-\mathbf{j}}B^{*}[\varphi]\in\lebe^{q'}(\Omega;\SLin_{\mathbf{j}}(\R^{n};V))$ with $\mathrm{div}_{k-\mathbf{j}}$ being the formal $\lebe^{2}$-adjoint of $\D^{k-\mathbf{j}}$,  
\begin{align*}
\int_{\Omega}\langle\B P_{i},\varphi\rangle\dif x &  = \int_{\Omega} \langle B[\D^{k}P_{i}],\varphi\rangle\dif x = \int_{\Omega}\langle \D^{k-\mathbf{j}}\D^{\mathbf{j}}P_{i},B^{*}[\varphi]\rangle\dif x \\ & = (-1)^{k-\mathbf{j}}\int_{\Omega}\langle \D^{\mathbf{j}}P_{i},\mathrm{div}_{k-\mathbf{j}}B^{*}[\varphi]\rangle\dif x \to  
(-1)^{k-\mathbf{j}}\int_{\Omega}\langle \D^{\mathbf{j}}P,\mathrm{div}_{k-\mathbf{j}}B^{*}[\varphi]\rangle\dif x \\ & = \langle \mathbb{B}P,\varphi\rangle_{\sobo^{-k+\mathbf{j},q}(\Omega)\times\sobo_{0}^{k-\mathbf{j},q'}(\Omega)}
\end{align*}
as $i\to\infty$. By routine lower semicontinuity results for weak*-convergence, we then use $\eqref{eq:soundofwater}_{2}$ to deduce $\B P=0$ as an equality in $\sobo^{-k+\mathbf{j},q}(\Omega;W)$. Let $\beta\in\mathbb{N}_{0}^{n}$ be arbitrary with $|\beta|=\mathbf{j}$. Since $\Pi_{(\ker\Acal)^{\bot}}\colon V\to(\ker\Acal)^{\bot}$ is linear and bounded, the convergence $\partial^{\beta}P_{i}\rightharpoonup \partial^{\beta}P$ weakly in $\lebe^{q}(\Omega;V)$ implies  $\partial^{\beta}\Pi_{(\ker\Acal)^{\bot}}[P_{i}]\rightharpoonup \partial^{\beta}\Pi_{(\ker\Acal)^{\bot}}[P]$ weakly in $\lebe^{q}(\Omega;V)$, whereby  $\eqref{eq:soundofwater}_{2}$ gives $\D^{\mathbf{j}}\Pi_{(\ker\Acal)^{\bot}}[P]=0$ again by lower semicontinuity of norms for weak convergence and arbitrariness of $|\beta|=\mathbf{j}$. 

Moreover, $\mathbf{e}_{j}\in\lebe^{q'}(\Omega;\SLin_{\mathbf{j}}(\R^{n};V))\cap\D^{\mathbf{j}}\mathcal{K}$ for all $j\in\{1,\ldots,m\}$, and so $\D^{\mathbf{j}}P_{i}\rightharpoonup \D^{\mathbf{j}}P$ weakly in $\lebe^{q}(\Omega;\SLin_{\mathbf{j}}(\R^{n};V))$ yields that  $P\in(\D^{\mathbf{j}}\mathcal{K})^{\bot}$ by $\eqref{eq:soundofwater}_{2}$. Summarising, we have
\begin{subequations}
\begin{align}
&\B P = 0,\label{eq:summaryboundary1}\\  &\D^{\mathbf{j}}\Pi_{(\ker\Acal)^{\bot}}[P]=0,\label{eq:summaryboundary2}\\ &P\in(\D^{\mathbf{j}}\mathcal{K})^{\bot}. \label{eq:summaryboundary3}
\end{align}
\end{subequations}
By \eqref{eq:summaryboundary2} and the connectedness of $\Omega$, $\Pi_{(\ker\Acal)^{\bot}}[P]$ is a polynomial $\mathbf{P}$ of degree $\mathbf{j}-1\leq k-2$, whereby $\B \mathbf{P}=0$. Since $P=\mathbf{P}+\Pi_{\ker\Acal}[P]$, we conclude $0=\B P=\B \Pi_{\ker\Acal}[P]$ by \eqref{eq:summaryboundary3}. Therefore, $\Pi_{\ker\Acal}[P]\in\mathcal{K}$ and so $\D^{\mathbf{j}}\Pi_{\ker\Acal}[P] \in\D^{\mathbf{j}}\mathcal{K}$. But then $\D^{\mathbf{j}}P \in \D^{\mathbf{j}}\mathcal{K}\cap(\D^{\mathbf{j}}\mathcal{K})^{\bot}=\{0\}$. 

In particular, $\D^{\mathbf{j}}\Pi_{\ker\Acal}[P_{i}]\rightharpoonup (\partial^{\beta}\Pi_{\ker\Acal}[P])_{|\beta|=\mathbf{j}}=(\Pi_{\ker\Acal}[\partial^{\beta}P])_{|\beta|=\mathbf{j}}=0$ weakly in $\lebe^{q}(\Omega;\SLin_{\mathbf{j}}(\R^{n};V))$. Since the embedding $\lebe^{q}(\Omega;\SLin_{\mathbf{j}}(\R^{n};V))\hookrightarrow\sobo^{-1,q}(\Omega;\SLin_{\mathbf{j}}(\R^{n};V))$ is compact, we deduce that (again for a non-relabeled subsequence) $\D^{\mathbf{j}}\Pi_{\ker\Acal}[P_{i}]\to 0$ strongly in $\sobo^{-1,q}(\Omega;\SLin_{\mathbf{j}}(\R^{n};V))$. Inserting $P_{i}$ into \eqref{eq:fuerganzP}, we then arrive at the desired contradiction for sufficiently large $i$ and thus have established \eqref{eq:contraclaim}. 

The passage from \eqref{eq:contraclaim} to \eqref{eq:mainclaimboundary} is then accomplished as follows: We set 
\begin{align*}
\Pi^{P}\coloneqq  \sum_{j=1}^{m}\skalarProd{\D^{\mathbf{j}}P}{\mathbf{e}_{j}}_{\lebe^{2}(\Omega)}\mathbf{f}_{j}, 
\end{align*}
whereby clearly $\Pi^{P}\in\mathcal{K}$ and apply \eqref{eq:contraclaim} to $P-\Pi^{P}$. Then we have $\Pi_{(\ker\Acal)^{\bot}}[P-\Pi^{P}]=\Pi_{(\ker\Acal)^{\bot}}[P]$, $\B(P-\Pi^{P})=\B P$ and, by the orthonormality of the $\mathbf{e}_{j}=\D^{\mathbf{j}}\mathbf{f}_{j}$'s, the third term on the right-hand side of \eqref{eq:contraclaim} vanishes. In consequence, the proof of \eqref{eq:mainclaimboundary} is complete. To finally deduce \eqref{eq:KMS2} from  \eqref{eq:mainclaimboundary}, we realise that, under the exponent condition $1<q\leq\frac{np}{n-(k-\mathbf{j})p}$, we have $\lebe^{p}(\Omega;W)\hookrightarrow \sobo^{-k+\mathbf{j},q}(\Omega;W)$. This completes the proof of direction '\ref{item:bdry2}$\Rightarrow$\ref{item:bdry1}'.

We now turn to the direction '\ref{item:bdry1}$\Rightarrow$\ref{item:bdry2}', and hence suppose that \eqref{eq:thm:KMS2} holds. Letting $M\coloneqq \dim \ker\Acal$, we pick an arbitrary isomorphism $\T\colon \R^{M}\to \ker\Acal$. Since $\Pi_{\ker\Acal}[\T a]=0$, we obtain by applying \eqref{eq:KMS2} to $P=\T a$
\begin{align*}
\min_{\Pi\in\mathcal{K}} \norm{\D^{\mathbf{j}}(\T a - \Pi)}_{\lebe^{q}(\Omega)} \leq c\, \norm{\B\T a}_{\lebe^{p}(\Omega)}\qquad \text{for all }a\in\hold^{\infty}(\overline{\Omega};\R^{M}).
\end{align*}
Now suppose that $\B\T a= 0$. Then the previous inequality implies that $\D^{\mathbf{j}}\T a$ and so $\T a$ and hence, writing $a=\T^{-1}(\T a)$,  $a$ is a polynomial too. Hence the nullspace of $\B\circ\T$ is finite dimensional and consists of polynomials of a fixed maximal degree on $\R^n$. Thus, by Lemma \ref{lem:Cellipt} \ref{item:kernel} we conclude the $\C$-ellipticity of $\B\circ\T$, and the proof is complete. 
\end{proof}
\begin{remark}
Let us briefly explain the consistency of the preceding theorem with previous results. To this end, consider the particular inequality 
\begin{align*}
\min_{\Pi\in\mathcal{K}}\|P-\Pi\|_{\lebe^{q}(\Omega)}\leq c\Big(\|\sym P\|_{\lebe^{q}(\Omega)} + \|\Curl P\|_{\lebe^{p}(\Omega)} \Big),\qquad P\in\hold^{\infty}(\overline{\Omega};\R^{3\times 3}) 
\end{align*}
as established in \cite{LN1}. Then $\mathcal{K}=\mathfrak{so}(3)$, which can be seen by taking $P$ to be gradients. Then Theorem \ref{thm:KMS-B} yields precisely the same set of correctors $\mathcal{K}$: Namely, in this case we have $\mathbf{j}=0$ so that, letting $P$ belong to $\mathcal{K}$ given by \eqref{eq:defK}, we may write $P=\T a$ with $\B \T a = 0$, where now $\T=\Anti\colon\R^{3}\to\mathfrak{so}(3)$ is the canonical identification map (see \eqref{eq:antidef} in the appendix for the details). By \textsc{Nye}'s formula (see $\eqref{eq:nye1}_{2}$ in the appendix), $\Curl \Anti a = 0$ is equivalent to $\D a =0$. Since $\Omega$ is connected, $a$ is constant, and hence $\mathcal{K}$ from \eqref{eq:defK} reduces to $\mathfrak{so}(3)$ indeed. 
\end{remark}
\begin{remark}
Under the assumptions of Theorem \ref{thm:KMS-B}, the same proof with the obvious modifications yields equivalence of \eqref{eq:ellipticityreducedC}  to validity of the inequality
 \begin{equation}\label{eq:thm:KMS2a1}
      \min_{\Pi\in\mathcal{K}}\norm{P-\Pi}_{\sobo^{\mathbf{j},q}(\Omega)}\le c\,\big(\norm{\Acal[P]}_{\sobo^{\mathbf{j},q}(\Omega)}+\norm{\B P}_{\lebe^p(\Omega)}\big), 
   \end{equation}
for all $\mathbf{j}\in\{0,\ldots,k-1\}$ and $p>1$ with $(k-\mathbf{j})p<n$ and $q\in(1,\frac{np}{n-(k-\mathbf{j})p}]$, where $\mathcal{K}$ is a suitable finite dimensional nullspace of the $V$-valued polynomials. 
\end{remark}
The following corollary is obtained by scaling the inequalities from Theorem \ref{thm:KMS-B} and can be obtained in the same way as Corollary \ref{cor:boundesets} follows from Theorem \ref{thm:KMS-A}: 
\begin{corollary}[Scaled inequalities]
Let $p>1$, $\mathbf{j}\in\mathbb{N}_{0}$, $k\in\mathbb{N}$ with $\mathbf{j}<k$ satisfy $(k-\mathbf{j})p<n$ and let $q\in(1,\frac{np}{n-(k-\mathbf{j})p}]$. Moreover, let the part map $\Acal$ and the $k$-th order differential operator $\B $ as defined in Section \ref{sec:genincompatibiltiies} satisfy \eqref{eq:ellipticityreducedC}. Then there exists a finite dimensional space $\mathcal{K}$ of $V$-valued polynomials and a constant $c=c(\mathbf{j},p,q,\Acal,\B )>0$ such that we have 
\begin{align*}
\min_{\Pi\in\mathcal{K}}r^{\mathbf{j}}\Big(\dashint_{\ball_{r}(x_{0})}|\D^{\mathbf{j}}(P-\Pi)|^{q}\dif x \Big)^{\frac{1}{q}} & \leq c\Big(r^{\mathbf{j}}\Big(\dashint_{\ball_{r}(x_{0})}|\D^{\mathbf{j}}\Acal[P]|^{q}\dif x \Big)^{\frac{1}{q}}\Big. \\ & \Big. \;\;\;\; + r^{k}\Big(\dashint_{\ball_{r}(x_{0})}|\B P|^{p}\dif x \Big)^{\frac{1}{p}} \Big) 
\end{align*}
for all $x_{0}\in\R^{n}$, $r>0$ and $P\in\hold^{\infty}(\overline{\ball_{r}(x_{0})};V)$.
\end{corollary}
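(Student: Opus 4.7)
The plan is a straightforward scaling argument reducing the inequality on $\ball_{r}(x_{0})$ to the analogous inequality on the unit ball $\ball_{1}(0)$, which is available from Theorem \ref{thm:KMS-B}. Given $x_{0}\in\R^{n}$, $r>0$ and $P\in\hold^{\infty}(\overline{\ball_{r}(x_{0})};V)$, I would introduce the pullback
\[
\widetilde{P}(y)\coloneqq P(x_{0}+ry),\qquad y\in\overline{\ball_{1}(0)},
\]
which lies in $\hold^{\infty}(\overline{\ball_{1}(0)};V)$. The basic chain rule gives, for each multi-index $\alpha\in\N_{0}^{n}$, $\partial^{\alpha}\widetilde{P}(y)=r^{|\alpha|}(\partial^{\alpha}P)(x_{0}+ry)$, and since $\Acal$ and $\B$ are linear respectively $k$-th order homogeneous differential operators with constant coefficients, one has
\[
\Acal[\widetilde{P}](y)=\Acal[P](x_{0}+ry)\qquad\text{and}\qquad \B\widetilde{P}(y)=r^{k}(\B P)(x_{0}+ry).
\]

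Next, I would apply Theorem \ref{thm:KMS-B} on $\Omega=\ball_{1}(0)$ to $\widetilde{P}$, producing a finite-dimensional space $\mathcal{K}$ of $V$-valued polynomials and a constant $c=c(\mathbf{j},p,q,\Acal,\B)>0$ such that
\[
\min_{\widetilde{\Pi}\in\mathcal{K}}\norm{\D^{\mathbf{j}}(\widetilde{P}-\widetilde{\Pi})}_{\lebe^{q}(\ball_{1}(0))}\leq c\,\Big(\norm{\D^{\mathbf{j}}\Acal[\widetilde{P}]}_{\lebe^{q}(\ball_{1}(0))}+\norm{\B\widetilde{P}}_{\lebe^{p}(\ball_{1}(0))}\Big).
\]
The bookkeeping key point is that $\mathcal{K}$ is invariant under the affine rescaling $y\mapsto x_{0}+ry$: by \eqref{eq:defK}, any element of $\mathcal{K}$ is of the form $\T a$ with $\B\T a\equiv 0$, and the identity $\B\T(a(x_{0}+r\boldsymbol\cdot))(y)=r^{k}(\B\T a)(x_{0}+ry)$ shows that $\widetilde{\Pi}\in\mathcal{K}$ if and only if its pushforward $\Pi(x)\coloneqq \widetilde{\Pi}((x-x_{0})/r)$ belongs to $\mathcal{K}$ as well. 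Since $\mathcal{K}$ is a subspace of polynomials of fixed bounded degree (by Lemma \ref{lem:Cellipt}\,\ref{item:kernel}), this space is independent of the underlying domain.

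Finally, I would unwind the change of variables term by term. A direct computation yields
\[
\norm{\D^{\mathbf{j}}(\widetilde{P}-\widetilde{\Pi})}_{\lebe^{q}(\ball_{1}(0))}=r^{\mathbf{j}-n/q}\norm{\D^{\mathbf{j}}(P-\Pi)}_{\lebe^{q}(\ball_{r}(x_{0}))},
\]
with the analogous identities (same scaling exponent) for the $\Acal[P]$-term and, for the $\B P$-term, with exponent $k-n/p$ in place of $\mathbf{j}-n/q$. Inserting these into the unit-ball inequality and dividing through by the common factor $r^{-n/q}$ converts the Lebesgue norms into the dashint averages $(\dashint_{\ball_{r}(x_{0})}|\boldsymbol\cdot|^{q})^{1/q}$ and $(\dashint_{\ball_{r}(x_{0})}|\boldsymbol\cdot|^{p})^{1/p}$ (up to a dimensional factor $\mathscr{L}^{n}(\ball_{1}(0))^{-1/q}$ which is absorbed into $c$), producing the scaling factors $r^{\mathbf{j}}$ in front of the first two terms and $r^{k}$ in front of the $\B P$-term, exactly as claimed. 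There is no genuine obstacle here; the only subtlety to verify carefully is the scale-invariance of $\mathcal{K}$, which is precisely the homogeneity of $\B\T$ combined with its $\C$-ellipticity.
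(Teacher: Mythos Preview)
Your proposal is correct and follows exactly the approach the paper indicates: reduce to the unit ball via the affine rescaling $\widetilde{P}(y)=P(x_{0}+ry)$, apply Theorem \ref{thm:KMS-B} there, and unwind the change of variables, using that $\mathcal{K}$ consists of polynomials in the kernel of the $\C$-elliptic operator $\B\circ\T$ and is therefore invariant under affine reparametrisations. The only cosmetic slip is the phrase ``dividing through by the common factor $r^{-n/q}$'': the $\B P$-term carries $r^{-n/p}$ instead, but once the Lebesgue norms are rewritten as averages the residual ratio $|\ball_{1}(0)|^{1/p-1/q}$ is a dimensional constant absorbed into $c$, so the conclusion stands as you wrote it.
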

We conclude this subsection with a result on alternative normalisation conditions. To connect this to the statements alluded to in Section \ref{sec:misca}, see Remark \ref{rem:KornNormalise1} below, we here choose a slightly different statement involving negative norms on $\mathbb{B}P$ that still can be extracted from the corresponding proofs of Theorems \ref{thm:KMS-A} and \ref{thm:KMS-B}.
\begin{corollary}[Generalised normalised KMS-inequalities]\label{cor:KMS-Bn}
Let $1<q<\infty$, $k\in\N$, and the part map $\Acal$ and the $k$-th order differential operator $\B$ be as in Section \ref{sec:genincompatibiltiies}. Moreover, let $\Omega\subset\R^{n}$ be an open, bounded and connected subset with Lipschitz boundary $\partial\Omega$. If for a $\mathbf{j}\in\N_0$ with $\mathbf{j}\leq k-1$ we have the conclusion 
\begin{align}\label{eq:auxiliarycondition}
(\Pi_{(\ker\Acal)^{\bot}}[\Pi]=0\;\;\text{and}\;\;\mathbb{B}\Pi=0)\Rightarrow \D^{\mathbf{j}}\Pi=\mathrm{const.}, 
\end{align}
then there exists a constant $c=c(q,\Acal,\B,\Omega)>0$ such that we have
   \begin{equation}\label{eq:thm:KMS2n}
      \norm{\D^{\mathbf{j}}P}_{\lebe^{q}(\Omega)}\le c\,\big(\norm{\D^{\mathbf{j}}\Pi_{(\ker\Acal)^\bot}[P]}_{\lebe^{q}(\Omega)}+\norm{\B P}_{\sobo^{-k+\mathbf{j},q}(\Omega)}\big)
   \end{equation}
for all $P\in\hold^\infty(\overline{\Omega};V)$ subject to the \emph{normalisation condition} $\int_{\Omega}\D^\mathbf{j}\Pi_{\ker\Acal}[P]\dif x =0$.
\end{corollary}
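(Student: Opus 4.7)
The idea is to apply the proof of Theorem~\ref{thm:KMS-B} through the intermediate inequality \eqref{eq:contraclaim} (i.e., before the final correction by an element of $\mathcal{K}$) and then use the normalisation condition together with \eqref{eq:auxiliarycondition} to eliminate the finite-dimensional correction term.

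First, I would verify that the hypothesis \eqref{eq:auxiliarycondition} secretly implies the $\C$-ellipticity condition \eqref{eq:ellipticityreducedC} underlying Theorem~\ref{thm:KMS-B}. Setting $M\coloneqq\dim\ker(\Acal)$ and fixing any $\R$-linear isomorphism $\T\colon\R^{M}\to\ker(\Acal)$, a distribution $a\in\mathscr{D}'(\Omega;\R^{M})$ in the nullspace of $\B\circ\T$ yields $\Pi\coloneqq\T a$ with $\Pi_{\ker(\Acal)^{\bot}}[\Pi]=0$ and $\B\Pi=0$. By \eqref{eq:auxiliarycondition} and the connectedness of $\Omega$, $\Pi$ is then polynomial of degree at most $\mathbf{j}$, and since $\T$ is an isomorphism, so is $a$. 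In particular, the nullspace of $\B\circ\T$ is finite-dimensional, so Lemma~\ref{lem:Cellipt}~\ref{item:kernel} provides the $\C$-ellipticity of $\B\circ\T$ and Lemma~\ref{lem:AB} translates this back to \eqref{eq:ellipticityreducedC}.

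With \eqref{eq:ellipticityreducedC} in hand, the proof of Theorem~\ref{thm:KMS-B} applies verbatim through the inequality \eqref{eq:contraclaim}, giving for every $P\in\hold^{\infty}(\overline{\Omega};V)$ that
\begin{align*}
\norm{\D^{\mathbf{j}}P}_{\lebe^{q}(\Omega)}\le c\,\Big(\norm{\D^{\mathbf{j}}\Pi_{\ker(\Acal)^{\bot}}[P]}_{\lebe^{q}(\Omega)}+\norm{\B P}_{\sobo^{-k+\mathbf{j},q}(\Omega)}+\sum_{j=1}^{m}\Big\vert\int_{\Omega}\skalarProd{\D^{\mathbf{j}}P}{\mathbf{e}_{j}}\,\dif x\Big\vert\Big),
\end{align*}
where $\{\mathbf{e}_{j}=\D^{\mathbf{j}}\mathbf{f}_{j}\}_{j=1}^{m}$ is an $\lebe^{2}$-orthonormal basis of $\D^{\mathbf{j}}\mathcal{K}$ with $\mathcal{K}$ as in \eqref{eq:defK}. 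I emphasise that the exponent restriction $(k-\mathbf{j})p<n$, $q\leq\tfrac{np}{n-(k-\mathbf{j})p}$ of Theorem~\ref{thm:KMS-B} is only invoked to replace the negative Sobolev norm of $\B P$ by its $\lebe^{p}$-norm at the very end; since we keep the negative Sobolev norm throughout, the estimate above is valid for the full range $1<q<\infty$, with only reflexivity and the compact embedding $\lebe^{q}\hookrightarrow\hookrightarrow\sobo^{-1,q}$ being required in the contradiction step.

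Finally, I would show that each sum term vanishes under the normalisation. Applying \eqref{eq:auxiliarycondition} to $\Pi=\mathbf{f}_{j}\in\mathcal{K}$ (for which $\Pi_{\ker(\Acal)^{\bot}}[\mathbf{f}_{j}]=0$ and $\B\mathbf{f}_{j}=0$ by definition of $\mathcal{K}$), we learn that each $\mathbf{e}_{j}=\D^{\mathbf{j}}\mathbf{f}_{j}$ is a \emph{constant} element of $\SLin_{\mathbf{j}}(\R^{n};V)$ whose values moreover lie in $\ker(\Acal)$. Since $\D^{\mathbf{j}}\Pi_{\ker(\Acal)^{\bot}}[P]$ is $\ker(\Acal)^{\bot}$-valued, its $V$-inner product with $\mathbf{e}_{j}$ vanishes pointwise, and consequently
\begin{align*}
\int_{\Omega}\skalarProd{\D^{\mathbf{j}}P}{\mathbf{e}_{j}}\,\dif x=\int_{\Omega}\skalarProd{\D^{\mathbf{j}}\Pi_{\ker(\Acal)}[P]}{\mathbf{e}_{j}}\,\dif x=\Big\langle\int_{\Omega}\D^{\mathbf{j}}\Pi_{\ker(\Acal)}[P]\,\dif x,\mathbf{e}_{j}\Big\rangle=0
\end{align*}
by the constancy of $\mathbf{e}_{j}$ and the normalisation hypothesis. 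The conceptual crux — and the only step that is non-routine once one has parsed the proof of Theorem~\ref{thm:KMS-B} — is the first paragraph, namely recognising that \eqref{eq:auxiliarycondition} already forces $\C$-ellipticity of the reduced operator $\B\circ\T$ via the finite-dimensionality criterion of Lemma~\ref{lem:Cellipt}, so that the entire machinery of Theorem~\ref{thm:KMS-B} may be recycled, with the normalisation tailored precisely to kill the correction term it would otherwise leave behind.
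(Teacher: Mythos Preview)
Your argument is correct and follows essentially the same strategy as the paper: both first extract the $\C$-ellipticity of $\B\circ\T$ from hypothesis \eqref{eq:auxiliarycondition} via Lemma~\ref{lem:Cellipt}\ref{item:kernel}, then recycle the machinery of Theorem~\ref{thm:KMS-B} and exploit that $\D^{\mathbf{j}}\Pi$ is constant for every $\Pi\in\mathcal{K}$.

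The only tactical difference is in the final step. You work directly from the intermediate inequality \eqref{eq:contraclaim} and show that each correction functional $\int_{\Omega}\langle\D^{\mathbf{j}}P,\mathbf{e}_{j}\rangle\,\dif x$ vanishes (splitting into the $\ker(\Acal)$- and $\ker(\Acal)^{\bot}$-parts and using constancy of $\mathbf{e}_{j}$ plus the normalisation). The paper instead passes through the minimised inequality \eqref{eq:thm:KMS2proj} and then, using the normalisation to write $\partial^{\beta}\Pi_{\ker(\Acal)}[P]$ as its own mean-free part, bounds $\|\partial^{\beta}P\|_{\lebe^{q}}$ by $c\,\|\partial^{\beta}(P-\Pi)\|_{\lebe^{q}}$ for every $\Pi\in\mathcal{K}$ via the elementary estimate $\|f-(f)_{\Omega}\|_{\lebe^{q}}\leq c\,\|f-\text{const}\|_{\lebe^{q}}$. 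Your route is marginally more direct since it avoids this mean-value comparison; the paper's route has the advantage of producing the minimised inequality \eqref{eq:thm:KMS2proj} as a reusable intermediate statement. Both are equally valid.
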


\begin{proof}
By \eqref{eq:auxiliarycondition} we deduce that if both $\Pi_{(\ker\Acal)^{\bot}}[\Pi]=0$ and $\mathbb{B}\Pi=0$ hold, then $\Pi$ is a polynomial of a fixed maximal degree. In particular, taking $\T$ as in Lemma \ref{lem:AB}\ref{item:CellConc3}, $\B\T$ is $\mathbb{C}$-elliptic. From the proof of Theorem \ref{thm:KMS-B} we then infer that we have 
   \begin{equation}\label{eq:thm:KMS2proj}
      \min_{\Pi\in\mathcal{K}}\norm{\D^{\mathbf{j}}(P-\Pi)}_{\lebe^{q}(\Omega)}\le c\,\big(\norm{\D^{\mathbf{j}}\Pi_{(\ker\Acal)^\bot}[P]}_{\lebe^{q}(\Omega)}+\norm{\B P}_{\sobo^{-k+\mathbf{j},q}(\Omega)}\big)
   \end{equation}
for all $P\in\hold^{\infty}(\overline{\Omega};V)$, where $\mathcal{K}$ is given by \eqref{eq:defK}. Now suppose that such a map $P$ satisfies $\int_{\Omega}\partial^\beta\Pi_{\ker\Acal}[P]\dif x =0$ for all $\beta\in\N^n_0$ with $\abs{\beta}=\mathbf{j}$. Then we have
   \begin{align*}
    \norm{\partial^\beta\Pi_{\ker\Acal}[P]}_{\lebe^q(\Omega)} &= \norm{\partial^\beta\Pi_{\ker\Acal}[P]-\textstyle\dashint_\Omega\partial^\beta\Pi_{\ker\Acal}[P]\dif x}_{\lebe^q(\Omega)}&\\
    & \le c(\Omega,q)\,\norm{\partial^\beta(\Pi_{\ker\Acal}[P]-\Pi)}_{\lebe^q(\Omega)} \quad \text{for all $\Pi\in\mathcal{K}$,}
   \end{align*}
   where we have used that $\partial^\beta\Pi\equiv\operatorname{const}$ for all $\Pi\in\mathcal{K}$. We then conclude
\begin{align*}
 \norm{\partial^\beta P}_{\lebe^q(\Omega)} &\le \norm{\partial^\beta\Pi_{\ker\Acal}[P]}_{\lebe^q(\Omega)} + \norm{\partial^\beta\Pi_{(\ker\Acal)^\bot}[P]}_{\lebe^q(\Omega)}\\
 & \le c\,\norm{\partial^\beta(\Pi_{\ker\Acal}[P]-\Pi)}_{\lebe^q(\Omega)} + \norm{\partial^\beta\Pi_{(\ker\Acal)^\bot}[P]}_{\lebe^q(\Omega)}\\
 & = c\,\norm{\Pi_{\ker\Acal}[\partial^\beta (P-\Pi)]}_{\lebe^q(\Omega)} + \norm{\Pi_{(\ker\Acal)^\bot}[\partial^\beta (P-\Pi)]}_{\lebe^q(\Omega)} \\
 & \le c\,\norm{\partial^\beta (P-\Pi)}_{\lebe^q(\Omega)}
\end{align*}
for all $\Pi\in\mathcal{K}$. 
Summing over all multi-indices $\beta$ with $\abs{\beta}\coloneqq\mathbf{j}$ we obtain
\begin{equation}
 \norm{\D^{\mathbf{j}}P}_{\lebe^{q}(\Omega)}\le c\,\norm{\D^{\mathbf{j}} (P-\Pi)}_{\lebe^q(\Omega)}\quad \text{for all $\Pi\in\mathcal{K}$.}
\end{equation}
Thus, taking the minimum over all $\Pi\in\mathcal{K}$ we conclude \eqref{eq:thm:KMS2n} in view of \eqref{eq:thm:KMS2proj}.
\end{proof}
\begin{remark}\label{rem:KornNormalise1}
In case $(n,q,\mathbf{j},\Acal,\B)=(3,2,0,\sym,\Curl)$ the normalisation condition for gradient fields $P=\D u$ reads $\int_\Omega\skew\D u\, \dif x = 0$ and is equivalent to $\int_{\Omega}\curl u\,\dif x = 0$ the condition which was imposed also originally by \textsc{Korn} \cite{Korn} so that we recover his statement:
 \begin{equation}
  \norm{\D u}_{\lebe^2(\Omega)}\le c\, \norm{\sym \D u}_{\lebe^2(\Omega)} \quad \text{for all $u$ satisfying $\int_{\Omega}\curl u\,\dif x = 0$.} \tag{Korn}
 \end{equation}
For the particular choice  $(n,q,\mathbf{j},\Acal,\B)=(n,2,0,\dev,\Div)$ we recover the required estimate needed for error analysis in higher order mixed finite element methods for plane elasticity, cf. \cite[Lemma 3.1]{Arnold} or \cite[Proposition 9.1.1]{Brezzi}.

Moreover, a higher order relation, namely in the case $(n,q,\mathbf{j},\Acal,\B)=(3,q,1,\dev,\sym\inc)$ is discussed in the examples Section \ref{sec:incexamples}.
\end{remark}

\subsection{Implications for function spaces}\label{sec:functionspaces} 
The inequalities obtained in the previous paragraphs directly translate to embeddings for function spaces. Let $1<p,q<\infty$. Given a part map $\Acal$ and a $k$-th order differential operator $\B$ as in Section \ref{sec:genincompatibiltiies} and $\ell\in\N_0$ with $\ell\leq k-1$, we define for an open set $\Omega\subset\R^{n}$ the Sobolev-type spaces
\begin{align}\label{eq:Sobolevtype}
\begin{split}
\sobo^{\Acal,\ell,q,\B,p}(\Omega)\coloneqq\{P\in\lebe_{\locc}^{1}(\Omega;V)\,|\;\Acal[P]\in\sobo^{\ell,q}(\Omega;\widetilde{V})\;\text{and}\;\B P\in\lebe^{p}(\Omega;W)\}. 
\end{split}
\end{align}
Note that 
\begin{align}\label{eq:Sobolevnorm}
\norm{P}_{\sobo^{\Acal,\ell,q,\B,p}(\Omega)}\coloneqq \norm{\Acal[P]}_{\sobo^{\ell,q}(\Omega)}+\norm{\B P}_{\lebe^{p}(\Omega)}
\end{align}
defines a seminorm on $\sobo^{\Acal,\ell,q,\B,p}(\Omega)$ for which $\sobo^{\Acal,\ell,q,\B,p}(\Omega)$ is closed. Let us note that, subject to \eqref{eq:ellipticityreduced},  \eqref{eq:Sobolevnorm} is a norm on $\hold_c^\infty (\Omega;V)$ by Corollary \ref{cor:boundesets}. Hence, if we define $\sobo_{0}^{\Acal,\ell,q,\B,p}(\Omega)$ as the closure of $\hold_c^\infty (\Omega;V)$, Corollary \ref{cor:boundesets} directly translates to 
\begin{align}\label{eq:spaceequality}
\sobo_{0}^{\Acal,\ell,q,\B,p}(\Omega) \simeq \sobo_{0}^{\mathrm{Id},\ell,q,\B,p}(\Omega).
\end{align} 
If $\partial\Omega$ is sufficiently regular, it is clear that membership in $\sobo_{0}^{\Acal,\ell,q,\B,p}(\Omega)$ corresponds to certain combinations of partial derivatives vanishing along $\partial\Omega$ in a suitable sense. However, applications from elasticity and material science (see e.g. \cite{EbobisseHacklNeff,LMN,Nerf}) sometimes necessitate to incorporate \emph{partial boundary conditions} for which \eqref{eq:spaceequality} proves insufficient. This can be made precise by virtue of trace operators which, for ease of exposition, shall be executed for first order operators $\mathbb{B}$ in the sequel; see Proposition \ref{prop:partialbdryvalues} below. In this situation, $\ell=0$, and we put $\sobo^{q,\mathbb{B},p}(\Omega):=\sobo^{\mathrm{Id},0,q,\mathbb{B},p}(\Omega)$ for brevity. We begin with 
\begin{lemma} \label{lem:tracelemma}
Let $\B $ be a first order, linear and homogeneous  differential operator on $\R^{n}$ from $V$ to $W$. Moreover, let $\Omega\subset\R^{n}$ be open and bounded with Lipschitz boundary $\partial\Omega$ and let 
\begin{align}\label{eq:expcondtrace}
q>\frac{n}{n-1},\;\;\;p>1\;\;\;\text{and}\;\;\;\frac{1}{p}\leq \frac{1}{q}+\frac{1}{n}\;\;\;\text{or}\;\;\;1<q\leq\frac{n}{n-1}\;\;\;\text{and}\;\;\;p>1.
\end{align}
Then there exists a bounded, linear \emph{trace operator} $\tr_{\partial\Omega}^{\B }\colon\sobo^{q,\B,p}(\Omega)\to \sobo^{-1/q,q}(\partial\Omega;W)$ such that we have 
\begin{align}\label{eq:traceidentify}
\langle\tr_{\partial\Omega}^{\B }(P), Q\rangle_{\partial\Omega} =\int_{\partial\Omega}\langle \B [\nu]P,Q\rangle_{W}\dif\mathscr{H}^{n-1}\qquad
\end{align}
for any $P\in\hold^{1}(\overline{\Omega};V)$. Here, $\langle\boldsymbol \cdot,\boldsymbol \cdot\rangle_{\partial\Omega}$ denotes the dual pairing between $W$-valued $\sobo^{-1/q,q}$- and $\sobo^{1/q,q'}$-maps on the boundary, and $\nu$ is the outer unit normal field to $\partial\Omega$. 
\end{lemma}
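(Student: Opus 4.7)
The plan is to define the trace by duality, using integration by parts. Writing $\B=\sum_{i=1}^{n}\B_{i}\partial_{i}$ with linear maps $\B_{i}\colon V\to W$, the formal adjoint is the first order operator $\B^{*}Q\coloneqq -\sum_{i=1}^{n}\B_{i}^{*}\partial_{i}Q$. For $P\in\hold^{1}(\overline{\Omega};V)$ and $Q\in\hold^{1}(\overline{\Omega};W)$, the classical divergence theorem yields
\begin{align}\label{eq:IBPplan}
\int_{\partial\Omega}\langle \B[\nu]P,Q\rangle_{W}\,\dif\mathscr{H}^{n-1}=\int_{\Omega}\langle \B P,Q\rangle_{W}\,\dif x-\int_{\Omega}\langle P,\B^{*}Q\rangle_{V}\,\dif x,
\end{align}
so the idea is to use the right-hand side of \eqref{eq:IBPplan} to \emph{define} $\langle\tr_{\partial\Omega}^{\B }(P),Q_{0}\rangle_{\partial\Omega}$ for $Q_{0}\in\sobo^{1/q,q'}(\partial\Omega;W)$ and $P\in\sobo^{q,\B,p}(\Omega)$.

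First I would pick a bounded linear extension operator $E\colon\sobo^{1/q,q'}(\partial\Omega;W)\to\sobo^{1,q'}(\Omega;W)$ (which exists by standard Gagliardo-type trace theorems on Lipschitz domains, since $1-\frac{1}{q'}=\frac{1}{q}$), and then set
\begin{align*}
\langle\tr_{\partial\Omega}^{\B}(P),Q_{0}\rangle_{\partial\Omega}\coloneqq \int_{\Omega}\langle \B P,EQ_{0}\rangle_{W}\,\dif x-\int_{\Omega}\langle P,\B^{*}EQ_{0}\rangle_{V}\,\dif x.
\end{align*}
For this to make sense and be bounded, I need both terms on the right to be controlled by the product $\|P\|_{\sobo^{q,\B,p}(\Omega)}\|Q_{0}\|_{\sobo^{1/q,q'}(\partial\Omega)}$. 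The second integral is harmless: $P\in\lebe^{q}(\Omega;V)$ and $\B^{*}EQ_{0}\in\lebe^{q'}(\Omega;V)$ by the boundedness of $E$. The first requires $EQ_{0}\in\lebe^{p'}(\Omega;W)$, and this is precisely where the exponent conditions \eqref{eq:expcondtrace} enter: for $q>\frac{n}{n-1}$, we have $q'<n$ and the Sobolev embedding $\sobo^{1,q'}(\Omega)\hookrightarrow\lebe^{(q')^{*}}(\Omega)$ combined with $\frac{1}{p}\leq\frac{1}{q}+\frac{1}{n}$ (equivalently $p'\leq(q')^{*}$) gives $EQ_{0}\in\lebe^{p'}(\Omega;W)$; for $1<q\leq\frac{n}{n-1}$, we have $q'\geq n$ and $\sobo^{1,q'}(\Omega)\hookrightarrow \lebe^{r}(\Omega)$ for any finite $r$, so any $p>1$ works.

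The main step where care is needed is showing independence of the extension $E$. If $E_{1}Q_{0}$ and $E_{2}Q_{0}$ are two extensions, then $\psi\coloneqq E_{1}Q_{0}-E_{2}Q_{0}\in\sobo_{0}^{1,q'}(\Omega;W)$, and I need
\begin{align}\label{eq:welldefplan}
\int_{\Omega}\langle \B P,\psi\rangle_{W}\,\dif x=\int_{\Omega}\langle P,\B^{*}\psi\rangle_{V}\,\dif x.
\end{align}
Approximating $\psi$ by $\psi_{k}\in\hold_{c}^{\infty}(\Omega;W)$ in $\sobo^{1,q'}(\Omega;W)$, identity \eqref{eq:welldefplan} is immediate for each $\psi_{k}$ by the very definition of the distributional derivative (since $\B P\in\lebe^{p}(\Omega;W)\subset\lebe_{\locc}^{1}$ is a regular distribution). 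Passing to the limit $k\to\infty$ is then possible thanks to the same Sobolev embedding used above: $\psi_{k}\to\psi$ in $\lebe^{p'}(\Omega;W)$ and $\B^{*}\psi_{k}\to\B^{*}\psi$ in $\lebe^{q'}(\Omega;V)$. This verifies well-definedness.

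Finally, consistency with \eqref{eq:traceidentify} on $\hold^{1}(\overline{\Omega};V)$ follows directly from \eqref{eq:IBPplan} and the identity $\mathrm{tr}_{\partial\Omega}(EQ_{0})=Q_{0}$. Boundedness of $\tr_{\partial\Omega}^{\B}$ into $\sobo^{-1/q,q}(\partial\Omega;W)=(\sobo^{1/q,q'}(\partial\Omega;W))^{*}$ then follows from the estimates obtained above and the boundedness of $E$. The only delicate point throughout is the extension-independence step; all other steps reduce to the choice of exponents in \eqref{eq:expcondtrace} being precisely what makes the Sobolev pairings close.
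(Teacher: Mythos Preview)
Your proposal is correct and follows essentially the same route as the paper: fix a bounded extension (right-inverse of the trace) $E\colon\sobo^{1/q,q'}(\partial\Omega;W)\to\sobo^{1,q'}(\Omega;W)$, define $\tr_{\partial\Omega}^{\B}(P)$ by the integration-by-parts pairing, and use the Sobolev embedding $\sobo^{1,q'}(\Omega)\hookrightarrow\lebe^{p'}(\Omega)$ (via $(q')^{*}\geq p'$ when $q>\tfrac{n}{n-1}$, or the borderline/supercritical embedding when $q\leq\tfrac{n}{n-1}$) to close the estimate. The only difference is that you additionally verify independence of the choice of extension via \eqref{eq:welldefplan}, whereas the paper simply fixes one right-inverse $\mathcal{E}$ and checks \eqref{eq:traceidentify} on $\hold^{1}(\overline{\Omega};V)$; since the lemma merely asserts existence of \emph{some} bounded trace operator with this consistency property, that extra step is not needed, though it is of course harmless.
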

\begin{proof} 
We start by recalling that there exists a bounded, linear, surjective trace operator\linebreak $\tr_{\partial\Omega}\colon\sobo^{1,q'}(\Omega;W)\to\sobo^{1-1/q',q'}(\partial\Omega;W)$ with a corresponding bounded, linear right-inverse $\mathcal{E}\colon \sobo^{1-1/q',q'}(\partial\Omega;W)\to\sobo^{1,q'}(\Omega;W)$. Now consider the map $$\tr_{\partial\Omega}^{\B }\colon \sobo^{q,\B,p}(\Omega)\to \sobo^{-1/q,q}(\partial\Omega;W)$$ defined by 
\begin{equation}\label{eq:linearpairing}
 \skalarProd{\tr_{\partial\Omega}^{\B }(P)}{Q}_{\partial\Omega}\coloneqq \int_{\Omega}\skalarProd{\B P}{\mathcal{E}Q}_W+\skalarProd{P}{\B^*(\mathcal{E}Q)}_{V}\dif x
\end{equation}
for $Q\in\sobo^{1/q,q'}(\partial\Omega;W)$. Noting that $((q')^{*})'\leq p$ by condition \eqref{eq:expcondtrace} if $q>\frac{n}{n-1}$ , we have that $\lebe^{p}(\Omega;W)\hookrightarrow \lebe^{((q')^{*})'}(\Omega;W)$ and so, by Sobolev's embedding theorem, 
\begin{align*}
| \skalarProd{\tr_{\partial\Omega}^{\B }(P)}{Q}_{\partial\Omega}| & \leq \|\B P\|_{\lebe^{p}(\Omega)} \|\mathcal{E}Q\|_{\lebe^{(q')^{*}}(\Omega)} + \|P\|_{\lebe^{q}(\Omega)} \|\mathcal{E}Q\|_{\sobo^{1,q'}(\Omega)}\\ 
& \leq c(p,q,\Omega)\|P\|_{\sobo^{q,\B,p}(\Omega)}\|\mathcal{E}Q\|_{\sobo^{1,q'}(\Omega)}\\ & \leq c(p,q,\Omega)\|P\|_{\sobo^{q,\mathbb{B},p}(\Omega)}\|Q\|_{\sobo^{1/q,q'}(\partial\Omega)}. 
\end{align*}
The same conclusion remains valid for $1<q\leq\frac{n}{n-1}$, then invoking the John-Nirenberg theorem since then $\sobo^{1,q'}(\Omega;W)\hookrightarrow\lebe^{p'}(\Omega;W)$. This implies that $\tr_{\partial\Omega}^{\B }(P)\in\sobo^{-1/q,q}(\partial\Omega;W)$. Now let $P\in\hold^{1}(\overline{\Omega};V)$. Then the Gau\ss -Green theorem implies that 
\begin{align*}
 \skalarProd{\tr_{\partial\Omega}^{\B }(P)}{\tr_{\partial\Omega}(\mathbf{Q})}_{\partial\Omega} = \int_{\partial\Omega} \langle \B [\nu]P,\tr_{\partial\Omega}(\mathbf{Q})\rangle_{W}\dif\mathscr{H}^{n-1}
\end{align*}
for all $\mathbf{Q}\in \hold^{1}(\overline{\Omega};W)$ and hence, by density, for all $\mathbf{Q}\in\sobo^{1,q'}(\Omega;W)$. Since the trace operator $\tr_{\partial\Omega}$ is surjective as a map $\sobo^{1,q'}(\Omega;W)\to\sobo^{1-1/q',q'}(\partial\Omega;W)$, this suffices to conclude \eqref{eq:traceidentify}, and the proof is complete. 
\end{proof}
As the reader might notice, this construction is completely analogous to the definition of weak normal or tangential traces for the spaces $\mathrm{H}^{1}(\div)$ or $\mathrm{H}^{1}(\mathrm{curl})$ (see, e.g.~\cite[Chapter 2]{Brezzi}). Now let $\Omega$ be as in the preceding lemma and $\Gamma\subset\partial\Omega$ be relatively open. In the situation of the previous lemma, we may then define 
\begin{align}\label{eq:tracezeroGamma}
\sobo_{0,\Gamma}^{q,\B,p}(\Omega)\coloneqq \Big\{P\in\sobo^{q,\B,p}(\Omega)\,|\;\tr_{\partial\Omega}^{\B }(P)|_{\Gamma}=0 \Big\}, 
\end{align}
where we say as usual that $F\in\sobo^{-1/q,q}(\partial\Omega;W)\simeq (\sobo^{1/q,q'}(\partial\Omega;W))'$ vanishes on $\Gamma$, in formulas $F|_{\Gamma}=0$, if $\langle F,\varphi\rangle_{\sobo^{-1/q,q}\times\sobo^{1/q,q'}} = 0$ for every $\varphi\in\sobo^{1/q,q'}(\partial\Omega;W)$ with $\spt(\varphi)\subset\Gamma$. By continuity of $\tr_{\partial\Omega}^{\B }\colon\sobo^{q,\B,p}(\Omega)\to \sobo^{-1/q,q}(\partial\Omega;V)$, cf. Lemma \ref{lem:tracelemma}, we have that 
\begin{align}\label{eq:closedness}
\sobo_{0,\Gamma}^{q,\B,p}(\Omega)\;\;\text{is a closed subspace of}\;\sobo^{q,\B,p}(\Omega).
\end{align}
The best known special instances of such spaces are given by 
\begin{align}\label{eq:Hspaces}
\begin{split}
\mathrm{H}(\Div;\Gamma_{\nu};\Omega)\coloneqq  \Big\{u\in\lebe^{2}(\Omega;\R^{3\times 3})\,|\,\Div \,u\in\lebe^{2}(\Omega;\R^{3}), \;\tr_{\Gamma_{\nu}}^{\Div }(u)|_{\Gamma_{\nu}}=0\Big\}, \\ 
\mathrm{H}(\Curl;\Gamma_{\tau};\Omega)\coloneqq  \Big\{u\in\lebe^{2}(\Omega;\R^{3\times 3})\,|\,\Curl\,u\in\lebe^{2}(\Omega;\R^{3\times 3}) ,\;\tr_{\Gamma_{\tau}}^{\Curl}(u)|_{\Gamma_{\tau}}=0 \Big\}, 
\end{split}
\end{align}
for relatively open sets $\Gamma_{\nu},\Gamma_{\tau}\subset\partial\Omega$, the indices indicating vanishing of the (weak) normal or tangential traces, respectively. Using the spaces $\sobo_{0,\Gamma}^{q,\B,p}(\Omega)$, we may now formulate the main result of this section: 
\begin{proposition}[Partially vanishing boundary conditions]\label{prop:partialbdryvalues}
Let the part map $\Acal$ and the \emph{first order} differential operator $\B $ be as in Section \ref{sec:genincompatibiltiies}. Moreover, let $1<p<n$ and $1<q\le p^*=\frac{np}{n-p}$. Then for any connected, open and bounded set $\Omega$ with Lipschitz boundary and any relatively open, non-empty subset $\Gamma\subset\partial\Omega$ there exists a constant $c=c(\Acal,\B ,\Omega,\Gamma,p,q)>0$ such that we have 
\begin{align}\label{eq:partialinequality}
\|P\|_{\lebe^{q}(\Omega)}\leq c\Big(\|\Acal[P]\|_{\lebe^{q}(\Omega)}+\|\B P\|_{\lebe^{p}(\Omega)}\Big)\qquad\text{for all}\;P\in\sobo_{0,\Gamma}^{q,\B ,p}(\Omega).
\end{align}
\end{proposition}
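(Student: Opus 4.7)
The plan is to combine Theorem \ref{thm:KMS-B} with the partial vanishing of the $\B$-trace on $\Gamma$ to absorb the finite dimensional corrector space $\mathcal{K}$ produced there. Throughout I will tacitly assume that $(\Acal,\B)$ satisfies the reduced $\C$-ellipticity \eqref{eq:ellipticityreducedC}, since otherwise no inequality of the form \eqref{eq:partialinequality} without correctors can hold; this guarantees that $\mathcal{K}$ is finite dimensional and consists of polynomials.

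The first step is to extend the version \eqref{eq:thm:KMS2a1} of Theorem \ref{thm:KMS-B} (with $\mathbf{j}=0$) from $\hold^{\infty}(\overline{\Omega};V)$ to all of $\sobo^{q,\B,p}(\Omega)$ by a routine approximation argument (the exponent constraint $1<q\le p^{*}$ supplies the embedding $\lebe^{p}\hookrightarrow\sobo^{-1,q}$ as in Corollary \ref{cor:boundesets}). This yields, for any $P\in\sobo_{0,\Gamma}^{q,\B,p}(\Omega)$, a corrector $\Pi^{P}\in\mathcal{K}$ with
\[
\norm{P-\Pi^{P}}_{\sobo^{q,\B,p}(\Omega)}\leq c\bigl(\norm{\Acal[P]}_{\lebe^{q}(\Omega)}+\norm{\B P}_{\lebe^{p}(\Omega)}\bigr),
\]
the $\Acal$- and $\B$-parts of the left-hand norm being automatic because every $\Pi\in\mathcal{K}$ satisfies $\Acal[\Pi]=0$ and $\B\Pi=0$. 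I then intend to establish the injectivity of the linear map $T\colon\mathcal{K}\ni\Pi\mapsto\tr_{\partial\Omega}^{\B}(\Pi)|_{\Gamma}\in\sobo^{-1/q,q}(\Gamma;W)$; granted this, finite dimensionality of $\mathcal{K}$ supplies a constant $C>0$ with $\norm{\Pi}_{\lebe^{q}(\Omega)}\leq C\norm{T(\Pi)}_{\sobo^{-1/q,q}(\Gamma)}$ for all $\Pi\in\mathcal{K}$. Applying this to $\Pi^{P}$, using $T(\Pi^{P})=-\tr_{\partial\Omega}^{\B}(P-\Pi^{P})|_{\Gamma}$ (since $P\in\sobo_{0,\Gamma}^{q,\B,p}$) and the continuity of the trace from Lemma \ref{lem:tracelemma}, I conclude $\norm{\Pi^{P}}_{\lebe^{q}(\Omega)}\leq c(\norm{\Acal[P]}_{\lebe^{q}(\Omega)}+\norm{\B P}_{\lebe^{p}(\Omega)})$; the triangle inequality $\norm{P}_{\lebe^{q}}\leq\norm{P-\Pi^{P}}_{\lebe^{q}}+\norm{\Pi^{P}}_{\lebe^{q}}$ then yields \eqref{eq:partialinequality}.

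The main obstacle is the injectivity of $T$, which I would handle by a local extension-by-zero argument exploiting that $\mathcal{K}$ consists of polynomials. Fix $\Pi\in\mathcal{K}$ with $T(\Pi)=0$, choose $x_{0}\in\Gamma$ and a ball $U=\ball_{r}(x_{0})$ small enough that $\partial\Omega\cap U\subset\Gamma$ and $U\setminus\overline{\Omega}\neq\emptyset$, and extend $\Pi$ by zero across $\Gamma$ to $\tilde\Pi$ on $U$. Testing against arbitrary $\varphi\in\hold_{c}^{\infty}(U;W)$ and integrating by parts on $U\cap\Omega$, the only potential singular contribution to $\B\tilde\Pi$ is the boundary measure associated with $\B[\nu]\Pi$ on $\partial\Omega\cap U\subset\Gamma$, which vanishes by the hypothesis $T(\Pi)=0$; hence $\B\tilde\Pi=0$ in $\mathscr{D}'(U;W)$. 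Moreover $\Pi_{\ker(\Acal)^{\bot}}[\tilde\Pi]=0$ on $U$, so writing $\tilde\Pi=\T a$ with $\T$ from Lemma \ref{lem:AB}, the distribution $a$ is annihilated by the $\C$-elliptic operator $\B\circ\T$ on $U$. By Lemma \ref{lem:Cellipt}\ref{item:kernel}, $a$ must be a polynomial on the connected set $U$; since $a\equiv 0$ on the non-empty open subset $U\setminus\overline{\Omega}$, it vanishes identically on $U$, whence $\Pi=0$ on $U\cap\Omega$. As $\Pi$ is itself a polynomial on $\R^{n}$ whose zero set contains the open set $U\cap\Omega$, we obtain $\Pi\equiv 0$ globally, establishing the injectivity and completing the argument.
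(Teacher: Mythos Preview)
Your proof is correct and takes a genuinely different route from the paper's. Both arguments ultimately hinge on showing that $\mathcal{K}\cap\sobo_{0,\Gamma}^{q,\B,p}(\Omega)=\{0\}$, but they reach and exploit this fact differently. The paper proceeds by contradiction and compactness: assuming \eqref{eq:partialinequality} fails, it extracts a sequence $(P_{i})$ with unit $\lebe^{q}$-norm and vanishing right-hand side, uses the extended Theorem~\ref{thm:KMS-B} to produce correctors $\Pi_{i}\in\mathcal{K}$ with $P_{i}-\Pi_{i}\to 0$, passes to a limit $\Pi\in\mathcal{K}$ via finite dimensionality, and then invokes Corollary~\ref{cor:tracecor} (whose proof rests on \textsc{Pompe}'s Lemma~\ref{lem:Pompe}) to force $\Pi=0$, yielding the contradiction. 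Your argument is direct and quantitative: you show the restricted trace map $T\colon\mathcal{K}\to\sobo^{-1/q,q}(\Gamma;W)$ is injective, turn this into an explicit bound $\norm{\Pi}_{\lebe^{q}}\leq C\norm{T(\Pi)}$ by equivalence of norms on finite-dimensional spaces, and then control $T(\Pi^{P})$ via the trace continuity of Lemma~\ref{lem:tracelemma}. The injectivity itself you obtain by an extension-by-zero across $\Gamma$: since $\Pi\in\mathcal{K}$ is a polynomial, the zero extension still satisfies $\B\tilde{\Pi}=0$ on a small ball $U$ straddling $\Gamma$, whence $\T^{-1}\tilde{\Pi}$ lies in the polynomial nullspace of the $\C$-elliptic operator $\B\circ\T$ on the connected set $U$ by Lemma~\ref{lem:Cellipt}\ref{item:kernel} and must vanish. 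This bypasses Lemma~\ref{lem:Pompe} entirely and is more self-contained; the paper's route, by contrast, packages the boundary rigidity into the citable black box of \textsc{Pompe}'s result and uses a soft compactness scheme that is perhaps more readily adaptable to variants of the statement.
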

For the proof of Proposition \ref{prop:partialbdryvalues}, we require the following additional ingredient:
\begin{lemma}[\textsc{Pompe}, {\cite[Thm. 2.4]{Pompe}}]\label{lem:Pompe}
Let $1<p<\infty$ and let $\A$ be a first order, linear $\C$-elliptic differential operator of the form \eqref{eq:operator} and let $\Omega\subset\R^{n}$ be an open, bounded and connected set with Lipschitz boundary.  If $\Gamma$ is a non-empty, relatively open subset of $\partial\Omega$ and $u\in\sobo^{1,q}_{0,\Gamma}(\Omega)\coloneqq \{v\in\sobo^{1,p}(\Omega;V)\,|\; \mathrm{tr}_{\partial\Omega}(v)=0\;\mathscr{H}^{n-1}\text{-a.e.}\;\text{on}\;\Gamma\}$ is such that $\A u=0$, then $u=0$. Here, $\mathrm{tr}_{\partial\Omega}$ denotes the trace operator on $\sobo^{1,p}(\Omega;V)$ as usual. 
\end{lemma}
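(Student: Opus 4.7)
The plan is to combine the regularity theory for $\C$-elliptic operators provided by Lemma \ref{lem:Cellipt} with a Rademacher-type argument at differentiability points of $\Gamma$, and then to iterate the resulting vanishing in order to cover all higher-order derivatives of $u$.

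First I would invoke Lemma \ref{lem:Cellipt}\ref{item:expression}: the $\C$-ellipticity of $\A$ yields an integer $d\in\N$ and a homogeneous linear constant-coefficient differential operator $\mathbb{L}$ with $\D^{d}=\mathbb{L}\A$. Hence $\D^{d}u=\mathbb{L}(\A u)=0$ in $\Omega$, so $u$ coincides on $\Omega$ with a polynomial $P$ of degree at most $d-1$; since $\A$ has constant coefficients, the identity $\A P\equiv 0$ then extends to all of $\R^{n}$. Continuity of $P$ upgrades the trace condition $\tr_{\partial\Omega}(u)=0$ on $\Gamma$ from its $\mathscr{H}^{n-1}$-a.e. form to the pointwise statement $P\equiv 0$ on $\Gamma$.

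Next I would fix a Rademacher differentiability point $x_{0}\in\Gamma$ of the Lipschitz boundary, with outward unit normal $\nu$, and represent $\Gamma$ locally as a Lipschitz graph $\{(y',\varphi(y'))\}$ after rotation so that $\nu=e_{n}$; by choice of $x_{0}$, $\varphi$ is differentiable at the base point with $\nabla\varphi=0$. A first-order Taylor expansion of $P$ at $x_{0}$, combined with the identity $P(y',\varphi(y'))\equiv 0$ and the limits $\varphi(y')/|y'|\to 0$ and $\varphi(y')^{2}/|y'|\to 0$, yields $\D P(x_{0})v=0$ for every $v\perp\nu$, i.e. all tangential first derivatives of $P$ at $x_{0}$ vanish. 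The relation $\A P(x_{0})=\A[\nu]\partial_{\nu}P(x_{0})+\sum_{v_{i}\perp\nu}\A[v_{i}]\partial_{v_{i}}P(x_{0})=0$, together with the $\R$-linear injectivity of $\A[\nu]$ built into $\C$-ellipticity, then recovers $\partial_{\nu}P(x_{0})=0$ as well, so $\nabla P(x_{0})=0$. Because the argument applies at $\mathscr{H}^{n-1}$-a.e. point of $\Gamma$ and $\nabla P$ is a polynomial, continuity extends the vanishing to all of $\Gamma$. Iteration is then automatic: since $\A$ has constant coefficients, every derivative $\partial^{\alpha}P$ again lies in $\ker\A$, so applying the previous step to $\partial^{\alpha}P$ in place of $P$ inductively gives $\partial^{\alpha}P\equiv 0$ on $\Gamma$ for every $\alpha\in\N_{0}^{n}$. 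Fixing any single $x_{0}\in\Gamma$, the Taylor expansion of $P$ at $x_{0}$ is then identically zero, and as $P$ is a polynomial we conclude $P\equiv 0$, hence $u\equiv 0$ on $\Omega$.

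The hardest step I expect is the rigorous extraction of the tangential first-derivative vanishing at the Rademacher point: passing from $P(y',\varphi(y'))\equiv 0$ to $\D P(x_{0})v=0$ for $v\perp\nu$ requires using simultaneously that $P$ is a polynomial, so that its Taylor expansion at $x_{0}$ terminates after finitely many terms, and that $\varphi$ is \emph{genuinely} differentiable at the chosen base point with vanishing gradient, so that cross terms of the form $\partial_{\nu}P(x_{0})\,\varphi(y')/|y'|$ disappear in the limit and only the tangential linear part survives. Once this single tangential step is in place, the injectivity of $\A[\nu]$ coming from $\C$-ellipticity propagates the vanishing to the normal direction, and the induction then takes care of all higher orders essentially mechanically.
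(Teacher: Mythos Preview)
The paper does not supply its own proof of this lemma; it is quoted as \cite[Thm.~2.4]{Pompe} and used as a black box in the proof of Corollary~\ref{cor:tracecor}. So there is no ``paper's proof'' to compare against, and the relevant question is simply whether your argument is sound.

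Your argument is correct. The reduction to a polynomial via Lemma~\ref{lem:Cellipt}\ref{item:expression} is the right first move, and the bootstrap on $\Gamma$ --- tangential derivatives vanish by differentiating the constraint $P|_{\Gamma}=0$ along the graph at a Rademacher point, the normal derivative then vanishes via injectivity of $\A[\nu]$, and the process iterates because $\partial^{\alpha}P$ is again a polynomial in $\ker\A$ vanishing on $\Gamma$ --- is clean. Two minor points worth making explicit if you write this up: (i) the upgrade from $\mathscr{H}^{n-1}$-a.e.\ vanishing on $\Gamma$ to pointwise vanishing uses that a full-measure subset of a Lipschitz graph is dense, together with continuity of $P$; (ii) the remainder control in the Taylor step uses only that $\varphi$ is Lipschitz (so $|\varphi(y')-\varphi(0)|=O(|y'|)$ handles all higher-order terms) and differentiable at the base point (so the first-order cross term $\partial_{\nu}P(x_{0})\cdot(\varphi(y')-\varphi(0))/|y'|\to 0$). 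Both are implicit in what you wrote; just make sure they survive a careful reading.

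It is also worth noting, in line with Remark~\ref{rem:Cellnec}, that your argument genuinely uses $\C$-ellipticity only through Lemma~\ref{lem:Cellipt}\ref{item:expression} to force $u$ to be a polynomial; the boundary step uses merely $\R$-ellipticity of $\A[\nu]$. This is consistent with the paper's observation that the conclusion of Lemma~\ref{lem:Pompe} can persist for certain non-$\C$-elliptic operators whose kernel nevertheless enjoys strong unique continuation properties.
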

By the different function space setting, the preceding lemma is not directly applicable to our objectives; for this, we record the following consequence of Lemma \ref{lem:Pompe}:
\begin{corollary}\label{cor:tracecor}
In the situation of Lemma \ref{lem:tracelemma}, suppose moreover that $\Omega$ is connected, $\Acal$ is as in Section \ref{sec:genincompatibiltiies} and that $\B $ satisfies \eqref{eq:ellipticityreducedC}. Then we have
\begin{align}\label{eq:trivialintersection}
\{P\in\sobo_{0,\Gamma}^{q,\B,p}(\Omega)\,|\; \Acal[P]=0\;\text{and}\;\B P = 0\}=\{0\}. 
\end{align}
\end{corollary}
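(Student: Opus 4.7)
The plan is to combine Lemmas \ref{lem:AB}, \ref{lem:Cellipt}, \ref{lem:tracelemma} and Pompe's Lemma \ref{lem:Pompe}, using the smoothness of $P$ obtained from the $\mathbb{C}$-ellipticity of $\mathbb{B}\circ\T$ to convert the weak $\mathbb{B}$-trace condition into a pointwise one. Let $P$ be an element of the set on the left-hand side of \eqref{eq:trivialintersection}. Since $\Acal[P]=0$ $\mathscr{L}^{n}$-a.e., the field $P$ is $\ker(\Acal)$-valued, so that, fixing an $\R$-linear isomorphism $\T\colon\R^{M}\to\ker(\Acal)$ with $M\coloneqq\dim\ker(\Acal)$, we may write $P=\T a$ with $a\in\lebe^{q}(\Omega;\R^{M})$. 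The reduced $\C$-ellipticity hypothesis \eqref{eq:ellipticityreducedC} together with Lemma \ref{lem:AB} \ref{item:CellConc3} then guarantees that the first-order operator $\mathbb{B}\circ\T$ is $\C$-elliptic.

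Next, I would use the bulk equation to upgrade regularity. The identity $\B P=0$ distributionally on $\Omega$ translates into $(\B\T)a=0$ in $\mathscr{D}'(\Omega;W)$, so that by Lemma \ref{lem:Cellipt} \ref{item:kernel} (which is applicable since $\Omega$ is connected) the field $a$ must be a $\R^{M}$-valued polynomial of uniformly bounded degree. In particular, $a\in\hold^{\infty}(\overline{\Omega};\R^{M})$ and hence $P\in\hold^{\infty}(\overline{\Omega};V)$.

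I would then convert the weak $\B$-trace condition to a pointwise condition on $a$. Because $P$ is smooth up to the boundary, the representation \eqref{eq:traceidentify} of Lemma \ref{lem:tracelemma} identifies $\mathrm{tr}_{\partial\Omega}^{\B}(P)$ with the pointwise boundary object $\B[\nu]P=(\B\T)[\nu]\,a$. The vanishing of this trace on $\Gamma$ then reads
\begin{equation*}
\int_{\Gamma}\langle (\B\T)[\nu(x)]\,a(x),Q(x)\rangle_{W}\,\dif\mathscr{H}^{n-1}(x) = 0
\end{equation*}
for every $Q\in \sobo^{1/q,q'}(\partial\Omega;W)$ with $\spt(Q)\subset\Gamma$, and a standard density argument forces $(\B\T)[\nu(x)]\,a(x)=0$ for $\mathscr{H}^{n-1}$-a.e. $x\in\Gamma$. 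Since $\C$-ellipticity of $\B\T$ in particular implies $\R$-ellipticity, the symbol map $(\B\T)[\nu(x)]\colon\R^{M}\to W$ is injective for $\mathscr{H}^{n-1}$-a.e. $x\in\Gamma$ (where the Lipschitz outer normal $\nu(x)\in\R^{n}\setminus\{0\}$ exists). Hence $a(x)=0$ for $\mathscr{H}^{n-1}$-a.e. $x\in\Gamma$.

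Consequently, $a\in\sobo^{1,q}_{0,\Gamma}(\Omega;\R^{M})$ with $(\B\T)a=0$, and Pompe's Lemma \ref{lem:Pompe} applied to the first-order $\C$-elliptic operator $\A\coloneqq\B\T$ yields $a\equiv 0$, and thus $P\equiv 0$. The crux of the argument is that $\B\T$ is $\C$-elliptic on the nose, since this simultaneously delivers (a) the polynomial structure of $a$ that makes the $\B$-trace classical, (b) the injectivity of the boundary symbol $(\B\T)[\nu]$ which promotes the weak $\B$-trace condition to a genuine pointwise Dirichlet condition for $a$, and (c) the uniqueness provided by Pompe.
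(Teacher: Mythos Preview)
Your proof is correct and follows essentially the same route as the paper's own argument: write $P=\T a$ via an isomorphism $\T\colon\R^{M}\to\ker(\Acal)$, use $\C$-ellipticity of $\B\T$ together with Lemma~\ref{lem:Cellipt} to infer that $a$ is a polynomial, use \eqref{eq:traceidentify} to turn the weak $\B$-trace condition into $(\B\T)[\nu]\,a=0$ pointwise on~$\Gamma$, then invoke $\R$-ellipticity of the symbol and Pompe's Lemma~\ref{lem:Pompe} to conclude $a\equiv 0$. The only cosmetic difference is that you apply Lemma~\ref{lem:Pompe} with exponent~$q$ where the paper uses~$p$, which is immaterial.
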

\begin{proof}
Let $P$ be an element of the left-hand side of \eqref{eq:trivialintersection} and pick a parametrising isomorphism $\T\colon\R^{M}\to\ker\Acal$ from Lemma \ref{lem:AB}, so that $P=\T a$ for some polynomial $a$. Since $P$ belongs to the left-hand side of \eqref{eq:trivialintersection}, we have $\B \T a =0$ and since $\B \circ\T$ is $\mathbb{C}$-elliptic, Lemma \ref{lem:Cellipt} implies that $a$, and thus $\T a$, is a polynomial of a fixed maximal degree. Especially, $a$ and so $\T a$ have classical traces, for which the condition $P\in\sobo_{0,\Gamma}^{q,\B,p}(\Omega)$ implies by virtue of \eqref{eq:traceidentify}
\begin{align*}
0= \B [\nu]\T a = \sum_{j=1}^{n}\nu_{j}\B _{j}\T a = (\B \circ \T)[\nu]a\;\;\;\mathscr{H}^{n-1}\text{-a.e. on $\Gamma$}. 
\end{align*}
Since $\B \T$ is $\mathbb{C}$- and thus, in particular, $\R$-elliptic, we conclude by $\nu\neq 0$ $\mathscr{H}^{n-1}$-a.e. on $\partial\Omega$ that $a=0$ $\mathscr{H}^{n-1}$-a.e. on $\Gamma$. On the other hand, since $P$ is a polynomial and thus trivially belongs to $\sobo^{1,p}(\Omega;V)$, this implies that $\T a \in\sobo_{0,\Gamma}^{1,p}(\Omega;V)$ and so $a \in\sobo_{0,\Gamma}^{1,p}(\Omega;\R^{M})$. In consequence, Lemma \ref{lem:Pompe} gives us $a=0$ and so $P=\T a =0$. This completes the proof. 
\end{proof}
We now come to the 
\begin{proof}[Proof of Proposition \ref{prop:partialbdryvalues}]
We work from \eqref{eq:fuerganzP} and subsequently claim that 
\begin{align}\label{eq:PeterLewintan}
\|P\|_{\lebe^{q}(\Omega)} \leq c\Big(\|\Pi_{(\ker\Acal)^{\bot}}[P]\|_{\lebe^{q}(\Omega)}  + \|\B P\|_{\lebe^{p}(\Omega)} \Big) 
\end{align}
holds for all $P\in\sobo_{0,\Gamma}^{q,\B,p}(\Omega)$. Suppose that \eqref{eq:PeterLewintan} does not hold. Similarly as in the proof of Theorem \ref{thm:KMS-B}, we then find $(P_{i})\subset \sobo_{0,\Gamma}^{q,\B,p}(\Omega)$ such that 
\begin{align}\label{eq:Peterswetdream}
\begin{split}
&\|P_{i}\|_{\lebe^{q}(\Omega)} = 1, \\
& \|\Pi_{(\ker\Acal)^{\bot}}[P_{i}]\|_{\lebe^{q}(\Omega)}  + \|\B P_{i}\|_{\lebe^{p}(\Omega)}  <\frac{1}{i}. 
\end{split}
\end{align}
By routine techniques, it is clear that $\hold^{\infty}(\overline{\Omega};V)$ is dense in $\sobo^{q,\B,p}(\Omega)$ and that, as \eqref{eq:ellipticityreducedC} is in action, \eqref{eq:thm:KMS2} also holds for $\sobo^{q,\B,p}(\Omega)$-maps. In the present situation, the set $\mathcal{K}$ from Theorem \ref{thm:KMS-B}, cf. \eqref{eq:defK}, is given by the finite dimensional space
\begin{align*}
\mathcal{K}=\{\T a \,|\;\B \T a = 0\;\text{in}\; \mathscr{D}'(\Omega;W)\},
\end{align*}
and in conjunction with $\eqref{eq:Peterswetdream}_{2}$ this now implies that for each $i\in\N$ there exists $\Pi_{i}\in\mathcal{K}$ such that $\norm{P_{i}-\Pi_{i}}_{\lebe^{q}(\Omega)}<\frac{c}{i}$ with $c>0$ independent of $i$. Combining this with $\eqref{eq:Peterswetdream}_{1}$, we infer that $(\Pi_{i})\subset\mathcal{K}$ is bounded in $\lebe^{q}(\Omega;V)$ and so, recalling that $\dim\mathcal{K}<\infty$, there exists $\Pi\in\mathcal{K}$ and a (non-relabeled) subsequence such that $\Pi_{i}\to\Pi$ strongly in $\lebe^{q}(\Omega;V)$. This, in turn, yields that $P_{i}\to\Pi$ strongly in $\lebe^{q}(\Omega;V)$. We now establish that $\Pi=0$, since then $P_{i}\to 0$ strongly in $\lebe^{q}(\Omega;V)$ (for a suitable subsequence) and this clearly contradicts $\eqref{eq:Peterswetdream}_{1}$. Since $\B \Pi = 0$, we conclude 
\begin{align*} 
\|P_{i} - \Pi\|_{\lebe^{q}(\Omega)} + \|\B (P_{i} - \Pi)\|_{\lebe^{p}(\Omega)} & \leq \|P_{i} - \Pi\|_{\lebe^{q}(\Omega)} + \|\B P_{i}\|_{\lebe^{p}(\Omega)} \to 0,\qquad i\to\infty.
\end{align*} 
On the other hand, $\sobo_{0,\Gamma}^{q,\B ,q}(\Omega)$ is a closed subspace of $\sobo_{0,\Gamma}^{q,\B ,q}(\Omega)$ by \eqref{eq:closedness}. We hence conclude $\Pi\in \sobo_{0,\Gamma}^{q,\B ,p}(\Omega)\cap\mathcal{K}=\{0\}$ by Corollary \ref{cor:tracecor}, and the proof is complete.  
\end{proof}

\begin{figure}
\begin{tikzpicture}
\draw [black!50!white, thick,fill=white!95!black] (5,0) circle [radius=1.25];;
\draw [red!70!black,ultra thick] (6.25,0) arc [radius=1.25, start angle=360, end angle= 250];
\draw [red!70!black,ultra thick] (6.25,0) arc [radius=1.25, start angle=360, end angle= 250];
\draw [green!30!black,ultra thick] (6.15,0.5) arc [radius=1.25, start angle=23, end angle= 225];
\node[white!30!black] at (5.55,-0.5) {$\ball_{1}(0)$};
\node at (6.5,1.5) {$\mathbb{R}^{n}$};
\draw [dotted, ->] (5,-1.75)--(5,1.75);
\node at (0,-2.25) {\text{(a)}};
\node at (5,-2.25) {\text{(b)}};
\draw [dotted, ->] (3,0)--(7,0);
\node[green!30!black] at (3.65,1.25) {$\{f=1\}$};
\node[red!70!black] at (6.5,-1.25) {$\{f=0\}$};
\draw [white!50!black, thick,dashed] (-2,1) to (1.5,-1.75);
\draw [white!30!black, thick, fill = white!95!black] (0.5,-0.96) [out=-45, in = 290] to (1.5,1) [out=110, in = 140] to (-1,0.215) -- (0.5,-0.96);
\draw [red!50!black, ultra thick]  (-1,0.215) -- (0.5,-0.96);
\draw [dotted, ->] (0,-1.75)--(0,1.75);
\draw [dotted, ->] (-2,0)--(2,0);
\node[red!70!black] at (-0.65,-0.8) {$\{f=0\}$};
\node at (1.6,-1.8) {$L$};
\node[white!30!black] at (0.75,0.75) {\large $\Omega$};
\node[white!30!black] at (1.75,1.5) {\large $\R^{n}$};
\end{tikzpicture}
\caption{The geometric situation of Remark \ref{rem:higherorder}.}
\label{fig:geometry}
\end{figure}

We conclude this section by discussing the assumptions underlying Proposition \ref{prop:partialbdryvalues}. 
\begin{remark}\label{rem:Cellnec}
We first address the standing of assumption \eqref{eq:ellipticityreducedC} for the validity of \eqref{eq:partialinequality}. To this end, note that $\mathbb{C}$-ellipticity of $\A$ is \emph{not necessary} for the conclusion of Lemma \ref{lem:Pompe}. To see this,  we identify $\mathbb{C}\simeq\R^{2}$ via $\iota\colon\mathbb{R}^{2}\ni (x_{1},x_{2})\to x_{1}+\imag x_{2}\in\mathbb{C}$. Given an open set $\Omega\subset\R^{2}$, we put $\mathrm{dev}\,\mathrm{sym}(\D u)\coloneqq \sym\,\D u-\frac{1}{2}\tr(\D u)\bbone_{2}$ for $u\in\hold^{1}(\Omega;\R^{2})$, leading to an operator which fails to be $\mathbb{C}$-elliptic (see, e.g., \cite[Ex. 2.2(c)]{BDG} and \cite[Ex. 3.6]{GRV}). We have 
\begin{align}\label{eq:identificationCR2}
u=(u_{1},u_{2})^{\top}\in \ker \mathrm{dev}\,\mathrm{sym}\,\D \Leftrightarrow u_{1}+\imag u_{2}\colon \iota(\Omega) \to\mathbb{C}\;\text{is holomorphic}
\end{align}
by virtue of the Cauchy-Riemann equations. Let $\Gamma$ be an arc in $\partial\mathbb{D}\coloneqq \{z\in\mathbb{C}\,|\;|z|=1\}$ and suppose that the holomorphic function $u_{1}+\imag u_{2}\colon\mathbb{D}\to\mathbb{C}$ vanishes identically on $\Gamma$. Then the Schwarz reflection principle and the maximum principle imply that $u_{1}+\imag u_{2}$ vanishes identically on $\overline{\mathbb{D}}$. As such, \eqref{eq:identificationCR2} implies that the conclusion of Lemma \ref{lem:Pompe} persists without $\mathbb{C}$-ellipticity of $\A$. In a different language, this has also been observed in \cite{BauerNeffPaulyStarke,Pompe1}. Because of this, it is not clear to us whether assumption \eqref{eq:ellipticityreducedC} is necessary for Proposition \ref{prop:partialbdryvalues} to hold true.
\end{remark} 
\begin{remark}\label{rem:higherorder}
We only discussed the first order case in Proposition \ref{prop:partialbdryvalues}, as the tools for the higher order case are beyond the scope of this paper. The main reason for this is Lemma \ref{lem:Pompe}, which does not extend to the higher order scenario even for $\mathbb{C}$-elliptic operators and necessitates higher order conditions for the (normal) traces to be satisfied. This can be seen 
\begin{enumerate} 
\item\label{item:Cellnec1} for the $\mathbb{C}$-elliptic operator $\A = \D^{2}$ (the Hessian) acting on $u\colon\R^{n}\to \R$, for which every affine-linear map $f\colon\R^{n}\to\R$ is in its nullspace. In consequence, $f$ might vanish on an $(n-1)$-dimensional hyperplane $L$. Especially, if a non-empty, relatively open subset $\Gamma$ of $L$ is contained in $\partial\Omega$, then we find a map $f$ vanishing on $\Gamma$ but not vanishing identically in $\Omega$ (see Figure \ref{fig:geometry} (a)). Such a behaviour clearly can be ruled out by posing additional conditions on certain combinations of normal and tangential traces, which shall pursued in a future work. Let us note, though, that there is an interplay with the geometry of $\partial\Omega$, since for instance there are no non-trivial affine linear maps that vanish on relatively open sets of spheres. 
\item\label{item:Cellnec2}  for the non-$\mathbb{C}$-elliptic operator $\Delta$ (the Laplacian) acting on $u\colon\R^{n}\to\R$, for which we may take any continuous $f\colon\partial\ball_{1}(0)\to \R$ with $\mathscr{H}^{n-1}(\{f=0\})>0$, $\mathscr{H}^{n-1}(\{f=1\})>0$ (see Figure \ref{fig:geometry} (b)) and solve the corresponding homogeneous Dirichlet problem with boundary data $f$ through Poisson's formula: 
\begin{align*}
u(x) = \frac{1}{\omega_{n-1}}\int_{\partial\ball_{1}(0)}\frac{1-|x|^{2}}{|x-\zeta|^{n}}f(\zeta)\dif\mathscr{H}^{n-1}(\zeta).
\end{align*}
Clearly, $u$ is non-constant, satisfies $u|_{\{f=0\}}=0$ and $\Delta u= 0$ in $\ball_{1}(0)$.  
\end{enumerate}
\end{remark}

\section{Examples: Old and new inequalities}\label{sec:examples}

In this concluding section we discuss how specific constellations, among others underlying the models sketched in Section \ref{sec:models}, can be retrieved from the general theory outlined above. Moreover, we obtain several new inequalities and non-inequalities that we proceed to outline now. The principal finding in case $V=\R^{3\times3}$ for $\Curl$-based operators are gathered in Figure \ref{fig:maintable}.

\subsection{$\Curl$-based operators in three space dimensions} In this section we consider the constellation $n=3$, $k=1$, $V=\widetilde{V}=\R^{3\times3}$ for operators of the form $\B=\Bcal[\Curl]$. This is not only the most interesting constellation from the point of view of applications but also from an algebraic one, since only in three space dimensions the matrix $\Curl$ of a $(3\times 3)$-matrix returns again a $(3\times3)$-matrix. Furthermore, we always have here the restriction $1<p<3$ and display the results on open, bounded and connected Lipschitz domains $\Omega\subset\R^3$. To abbreviate notation in the following, let us set $\mathcal{W}\coloneqq \sobo^{\frac{3p}{3-p},\B,\,p}(\Omega;\R^{3\times 3})$ and $\mathcal{W}_\Gamma\coloneqq \sobo^{\frac{3p}{3-p},\B,p}_{0,\Gamma}(\Omega;\R^{3\times 3})$ for any relatively open and non-empty $\Gamma\subset\partial\Omega$. Let $\Acal,\Bcal:\R^{3\times3}\to\R^{3\times3}$ with the possible choices\footnote{Whenever the underlying dimension $n$ is fixed, we understand $\tr(A)\coloneqq \tr(A)\bbone_{n}$ in the sequel.} $\Acal,\Bcal\in\{\mathrm{Id},\dev,\sym,\dev\sym,\skew+\tr,\skew,\tr\}$. Applying Lemma \ref{lem:AB}, we investigate in the ($\C$-)ellipticity of  $\Bcal[\Curl] \colon \hold^\infty_c(\R^n;\ker\Acal) \to \hold^\infty_c(\R^n;\R^{3\times 3})$.
\begin{figure}\centering
 \begin{tabular}{c|c|c|c|c|c|c|c|}
\diagbox[innerwidth=1.5cm]{$\Acal$}{$\Bcal$}& $\mathrm{Id}$ & $\dev$ & $\sym$ & $\dev\sym$ & $\skew+\tr$ & $\skew$ & $\tr$\\\hline
$\mathrm{Id}$ & \checkmark & \checkmark & \checkmark & \checkmark & \checkmark & \checkmark & \checkmark \\\hline 
$\dev$ & \checkmark & \checkmark & $\lightning$ & $\lightning$ & \checkmark & \checkmark & $\lightning$ \\\hline
$\sym$ & \checkmark & \checkmark & \checkmark & \checkmark & \diagbox[dir=SW]{$\R$-\checkmark}{$\C$-$\lightning$} & $\lightning$ & $\lightning$ \\\hline
$\dev\sym$ & \checkmark & \checkmark & $\lightning$ & $\lightning$ & \diagbox[dir=SW]{$\R$-\checkmark}{$\C$-$\lightning$} & $\lightning$ & $\lightning$ \\\hline
$\skew+\tr$ & \diagbox[dir=SW]{$\R$-\checkmark}{$\C$-$\lightning$} & \diagbox[dir=SW]{$\R$-\checkmark}{$\C$-$\lightning$} & $\lightning$ & $\lightning$ & $\lightning$ & $\lightning$ & $\lightning$ \\\hline
$\skew$ & $\lightning$ & $\lightning$ & $\lightning$ & $\lightning$ & $\lightning$ & $\lightning$ & $\lightning$ \\\hline 
$\tr$ & $\lightning$ & $\lightning$ & $\lightning$ & $\lightning$ & $\lightning$ & $\lightning$ & $\lightning$ \\\hline 
\end{tabular}
\caption{Overview when $\Bcal[\Curl]|_{\hold^\infty_c(\R^n;\ker\Acal)}$ is ($\C$-)elliptic, where \checkmark means $\C$-ellipticity,  $\lightning$ denotes non-ellipticity, $\R$-\checkmark / $\C$-$\lightning$ means $\mathbb{R}$-ellipticity but no $\C$-ellipticity. }\label{fig:maintable}
\end{figure}

\subsubsection{}
For the sake of completeness we start with the trivial case $\Acal=\mathrm{Id}$. Here, we have $\ker\Acal=0$, so that \eqref{eq:ellipticityreduced} and \eqref{eq:ellipticityreducedC} are trivially fulfilled.

\subsubsection{}
The first interesting case is $\Acal=\dev$. Then,  $\ker\Acal=\{\boldsymbol\alpha\cdot\bbone\,|\,\boldsymbol\alpha\in\R\}$. In this case we consider part maps of $\Curl(\zeta\cdot\bbone) =-\Anti(\nabla\zeta)$. Since $\tr\Curl(\zeta\cdot\bbone)\equiv0$ the operator $\Bcal[\Curl(\boldsymbol\cdot\,\bbone)]$ with $\Bcal\in\{\mathrm{Id},\dev,\skew+\tr,\skew\}$ behaves like the usual gradient $\nabla\zeta$ and, thus, is $\C$-elliptic in these cases. On the contrary for $\Bcal\in\{\sym,\dev\sym,\tr\}$ the corresponding operator $\Bcal[\Curl(\boldsymbol\cdot\,\bbone)]$ is not elliptic, since we also have $\sym\Curl(\zeta\cdot\bbone)\equiv0$. We display exemplarily
two KMS-type inequalities both of first and second type which hence follow by Proposition \ref{prop:partialbdryvalues} and Theorem \ref{thm:KMS-B}:
\begin{align*}
  \norm{P}_{\lebe^\frac{3p}{3-p}} &\lesssim \norm{\dev P}_{\lebe^\frac{3p}{3-p}}+\norm{\dev\Curl P}_{\lebe^p}\quad \text{for all $P\in\mathcal{W}_\Gamma$},
  \shortintertext{but also}
  \min_{\Pi\in\mathcal{K}_1}\norm{P-\Pi}_{\lebe^\frac{3p}{3-p}} &\lesssim \norm{\dev P}_{\lebe^\frac{3p}{3-p}}+\norm{\dev\Curl P}_{\lebe^p}\quad \text{for all $P\in\mathcal{W}$},
 \end{align*}
 these estimates can also be deduced from any of \cite{GLN,GmSp,LN3-tracefree}. However, we also retrieve the following new inequalities, which, to the best of our knowledge, have not been observed in the literature so far:
  \begin{align*}
  \norm{P}_{\lebe^\frac{3p}{3-p}} &\lesssim \norm{\dev P}_{\lebe^\frac{3p}{3-p}}+\norm{\skew\Curl P}_{\lebe^p}\quad \text{for all $P\in\mathcal{W}_\Gamma$},
  \shortintertext{but also}
 \Aboxed{
  \min_{\Pi\in\mathcal{K}_2}\norm{P-\Pi}_{\lebe^\frac{3p}{3-p}} &\lesssim \norm{\dev P}_{\lebe^\frac{3p}{3-p}}+\norm{\skew\Curl P}_{\lebe^p}\quad \text{for all $P\in\mathcal{W}$}.
  }
 \end{align*}
By our theorems we cannot replace $\skew\Curl$ by $\sym\Curl$ here, also see Example \ref{ex:algebraicobst}.

Here we have $\mathcal{K}_1=\mathcal{K}_2=\{\boldsymbol\gamma\cdot\bbone\mid\boldsymbol\gamma\in\R\}$ so that the normalised KMS-inequalities hold for $\mathbf{j}=0$ in these cases, cf.~Corollary \ref{cor:KMS-Bn}, so e.g.~we have:
\begin{equation*}
\boxed{
 \norm{P}_{\lebe^q(\Omega)}\le c\,(\norm{\dev P}_{\lebe^q(\Omega)}+\norm{\skew\Curl P}_{\sobo^{-1,q}(\Omega)}) \quad \text{for all $P$ subject to  $\textstyle\int_\Omega\tr P\dif x=0$.}}
\end{equation*}

\subsubsection{}\label{sec:symparts}

The most prominent Korn-type inequalities focus on part maps $\Acal=\sym$. Here, $\ker\Acal=\mathfrak{so}(3)=\{\Anti \a\,|\, \a\in\R^3\}$, so that we have to investigate the ($\C$-)ellipticity of 
 $$\Curl \Anti a = (\div a) \cdot\bbone-(\D a)^\top.$$
 We discuss the situation of each part map $\Bcal$ separately, whereby we assume that ellipticity properties of gradient based operators are well known:
 \begin{itemize}
  \item For $\Bcal=\mathrm{Id}$ we consider the full operator $\Curl\circ\Anti$ above, which is $\C$-elliptic: On the symbol level we have:
  $$
  \skalarProd{\xi}{\a}\cdot\bbone-\xi\otimes\a\overset{!}{=}0\quad \overset{\tr(\boldsymbol\cdot)}{\Longrightarrow}\quad 2\skalarProd{\xi}{\a}=0 \quad\Rightarrow \quad \skalarProd{\xi}{\a}=0 \quad \underset{\xi\otimes\a=0}{\overset{\xi\neq0}{\Longrightarrow}}\quad \a=0, 
  $$
  and we recover the Korn-Maxwell-Sobolev inequalities from \eqref{eq:KMSbasic}.
  \item For $\Bcal=\dev$ we obtain $-(\dev\D a)^\top=\frac{\div a}{3}-(\D a)^\top$ a $\C$-elliptic operator. Indeed, since $\xi\neq0$ there exists an index $i$ such that $\xi_i\neq0$. Then considering the $i$-th column we obtain that $\a_j=0$ for all $j\neq i$. Hence, in the $(j,j)$th entry it remains $\frac13\a_i\xi_i=0$ so that also $\a_i=0$. Thus, we can apply  Proposition \ref{prop:partialbdryvalues} and Theorem \ref{thm:KMS-B} to conclude
    \begin{align*}
  \norm{P}_{\lebe^\frac{3p}{3-p}} &\lesssim \norm{\sym P}_{\lebe^\frac{3p}{3-p}}+\norm{\dev\Curl P}_{\lebe^p}\quad \text{for all $P\in\mathcal{W}_\Gamma$},
  \shortintertext{but also}
  \min_{\Pi\in\mathcal{K}_3}\norm{P-\Pi}_{\lebe^\frac{3p}{3-p}} &\lesssim \norm{\sym P}_{\lebe^\frac{3p}{3-p}}+\norm{\dev\Curl P}_{\lebe^p}\quad \text{for all $P\in\mathcal{W}$},
 \end{align*}
 estimates which also follow from each of the papers \cite{GLN,GmSp,LN3-tracefree}, whereby for the expression of $\mathcal{K}_3$ we refer the reader to \cite[Lemma 11 (b)]{LN3-tracefree}.
  \item With $\Bcal=\sym$ we consider $\div a\cdot\bbone-\sym\D a$ a $\C$-elliptic operator (same argument as in the penultimate item) and the discussion here is postponed to the next item.
  \item For $\Bcal=\dev\sym$ we obtain $-\dev\sym\D a$ a $\C$-elliptic operator (we are in three space dimensions), thus with this and the previous item we recover the results from \cite{LMN}:
    \begin{align*}
  \norm{P}_{\lebe^\frac{3p}{3-p}} &\lesssim \norm{\sym P}_{\lebe^\frac{3p}{3-p}}+\norm{\dev\sym\Curl P}_{\lebe^p}\quad \text{for all $P\in\mathcal{W}_\Gamma$},
  \shortintertext{but also}
  \min_{\Pi\in\mathcal{K}_4}\norm{P-\Pi}_{\lebe^\frac{3p}{3-p}} &\lesssim \norm{\sym P}_{\lebe^\frac{3p}{3-p}}+\norm{\dev\sym\Curl P}_{\lebe^p} \quad \text{for all $P\in\mathcal{W}$}.
 \end{align*}
 Since for the latter combination the kernel $\mathcal{K}_4$ consists of special quadratic polynomials, cf.~\cite[Lemma 2.9]{LMN}, the normalised KMS-inequalities hold for $\mathbf{j}=2$ here, cf.~Corollary \ref{cor:KMS-Bn}:
 \begin{equation*}
 \norm{\D^2 P}_{\lebe^q(\Omega)}\le c\,(\norm{\D^2 \sym P}_{\lebe^q(\Omega)}+\norm{\dev\sym\Curl P}_{\sobo^{1,q}(\Omega)}),
\end{equation*}
for all $P$ satisfying  $\textstyle\int_\Omega\D^2\skew P\dif x=0$.
  \item The situation changes for $\Bcal=\skew+\tr$. The corresponding operator here reads $2\div a \cdot \bbone + \skew \D a$, which is related to the $\div+\curl$-operator, both are $\R$-elliptic but not $\C$-elliptic: For the ellipticity, let $v\in \R^3$, $\xi\in\R^{3}\setminus\{0\}$ and consider on the symbol level: 
\begin{align}
\begin{split}
2\skalarProd{v}{\xi}\cdot\bbone+\skew(v\otimes\xi)\overset{!}{=}0 & \Leftrightarrow  \skew(v\otimes \xi)=0 \quad\text{and}\quad \skalarProd{v}{\xi}=0  \\
 & \Leftrightarrow v\times\xi=0 \quad\text{and}\quad \skalarProd{v}{\xi}=0.
\end{split}
\end{align}
Since, the cross-product satisfies the area property we obtain:
\begin{equation*}
 0 = \abs{v\times\xi}^2=\abs{v}^2\abs{\xi}^2-\skalarProd{v}{\xi}^2 = \abs{v}^2\abs{\xi}^2 \quad \overset{\xi\in\R^3\backslash\{0\}}{\Longrightarrow}\quad v=0,
\end{equation*}
meaning that the corresponding operator is ($\R$-)elliptic. This operator is not $\C$-elliptic:
\begin{equation*}
 2\left\langle \begin{pmatrix} 1 \\ \imag \\ 0 \end{pmatrix},\begin{pmatrix} -\imag \\ 1  \\ 0 \end{pmatrix}\right\rangle\cdot\bbone+\skew\left(\begin{pmatrix} 1 \\ \imag \\ 0  \end{pmatrix}\otimes \begin{pmatrix} -\imag \\ 1 \\ 0  \end{pmatrix}\right) =0.
\end{equation*}
  Hence, only the Korn-Maxwell-Sobolev inequality of the first kind (cf. Theorem \ref{thm:KMS-A}) applies for this combination: for all $P\in\mathcal{W}_{\partial\Omega}$ it holds
  \begin{align*}
  \norm{P}_{\lebe^\frac{3p}{3-p}} \lesssim \norm{\sym P}_{\lebe^\frac{3p}{3-p}}+\norm{\skew\Curl P}_{\lebe^p} + \norm{\tr\Curl P}_{\lebe^p}.
  \end{align*}
The combination of both $\Curl$-terms is needed on the right hand side, cf. the next items.
  \item For $\Bcal=\skew$ we obtain $\skew\D a$ which behaves like $\curl a$ and both are not elliptic. The case $\Bcal=\skew$ is also covered by the above Example \ref{ex:algebraicobst2}.
  \item Finally, for $\Bcal=\tr$ it remains only $\div a$ which is not elliptic.
 \end{itemize}

\subsubsection{}
To obtain trace-free symmetric Korn-type inequalities we consider $\Acal=\dev\sym$. Then $\ker\Acal=\{\Anti \a + \boldsymbol\alpha\cdot\bbone |\, \a\in\R^3,\; \boldsymbol\alpha\in\R\}$ and the corresponding operator has the form
$$\Curl \left[\Anti a +\zeta\cdot\bbone\right] = \div a \cdot\bbone-(\D a)^\top-\Anti(\nabla\zeta).$$
We distinguish the part maps $\Bcal$:
\begin{itemize}
 \item With $\Bcal=\mathrm{Id}$ we have the full operator, which is $\C$-elliptic (consider its symmetric and skew-symmetric parts to this end). Hence, we recover from any of \cite{GLN,GmSp,LN3-tracefree}:
        \begin{align*}
  \norm{P}_{\lebe^\frac{3p}{3-p}} &\lesssim \norm{\dev\sym P}_{\lebe^\frac{3p}{3-p}}+\norm{\Curl P}_{\lebe^p}\quad \text{for all $P\in\mathcal{W}_\Gamma$},
  \shortintertext{but also}
  \min_{\Pi\in\mathcal{K}_5}\norm{P-\Pi}_{\lebe^\frac{3p}{3-p}} &\lesssim \norm{\dev\sym P}_{\lebe^\frac{3p}{3-p}}+\norm{\Curl P}_{\lebe^p}\quad \text{for all $P\in\mathcal{W}$},
 \end{align*}
 and the elements of $\mathcal{K}_5$ are described in \cite[Lemma 11 (a)]{LN3-tracefree}.
    \item For $\Bcal=\dev$ we obtain $-(\dev\D a)^\top-\Anti(\nabla\zeta)$ also a $\C$-elliptic operator (again consider its symmetric and skew-symmetric parts). Thus, it holds
        \begin{align*}
  \norm{P}_{\lebe^\frac{3p}{3-p}} &\lesssim \norm{\dev\sym P}_{\lebe^\frac{3p}{3-p}}+\norm{\dev\Curl P}_{\lebe^p}\quad \text{for all $P\in\mathcal{W}_\Gamma$},
  \shortintertext{but also}
  \min_{\Pi\in\mathcal{K}_6}\norm{P-\Pi}_{\lebe^\frac{3p}{3-p}} &\lesssim \norm{\dev\sym P}_{\lebe^\frac{3p}{3-p}}+\norm{\dev\Curl P}_{\lebe^p}\quad \text{for all $P\in\mathcal{W}$},
 \end{align*}
 whereby both results also follow from any of the three articles \cite{GLN,GmSp,LN3-tracefree} and $\mathcal{K}_6$ consists of special affine linear polynomials, cf.~\cite[Lemma 11 (c)]{LN3-tracefree}.
    \item When $\Bcal\in\{\sym,\dev\sym\}$ we obtain $\div a\cdot\bbone-\sym\D a$ and $-\dev\sym\D a$, respectively, which are both not elliptic since they do not see the operation on $\zeta$, so that there are no KMS inequalities for these combinations, see also Example \ref{ex:algebraicobst}.
    \item For $\Bcal=\skew+\tr$ we consider the operator $$2\div a\cdot\bbone+\skew\D a -\Anti(\nabla\zeta)=2\div a\cdot\bbone+\Anti\left[\curl\frac{a}{2}-\nabla\zeta\right].$$ We show that it is $\R$-elliptic but not $\C$-elliptic. Indeed, considering first the symmetric part on the symbol level we obtain $\skalarProd{\a}{\xi}=0$. Then, for the skew-symmetric part on the symbol level we have
    $$\frac{\a}{2}\times\xi+\boldsymbol\alpha\cdot\xi\overset{!}{=}0\quad \underset{\skalarProd{\a}{\xi}=0}{\overset{\xi\neq0}{\Longleftrightarrow}}\quad  \boldsymbol\alpha=0,\quad \a=0 \qquad \text{over $\R$,}$$
    whereby over $\C$ we have with $\a=(2+2\imag,0,0)^\top$, $\xi=(0,1,\imag)^\top$, $\boldsymbol\alpha=\imag-1$:
    $$2\skalarProd{\a}{\xi}\cdot\bbone-\Anti\left(\frac{\a}{2}\times\xi+\boldsymbol\alpha\cdot\xi\right)=0. $$
    Hence,  only Theorem \ref{thm:KMS-A} applies here, i.e. for all $P\in\mathcal{W}_{\partial\Omega}$ the following new estimate holds:
    \begin{align*}
    \Aboxed{
  \norm{P}_{\lebe^\frac{3p}{3-p}} &\lesssim \norm{\dev\sym P}_{\lebe^\frac{3p}{3-p}}+\norm{\skew\Curl P}_{\lebe^p} + \norm{\tr\Curl P}_{\lebe^p}.
  }
  \end{align*}
Note that the combination of both $\Curl$-terms is needed on the right hand side, cf. the next items.
    \item With $\Bcal=\skew$ we have $\skew\D a -\Anti(\nabla\zeta)=\Anti\left[\curl\frac{a}{2}-\nabla\zeta\right]$  which is not elliptic (set $\zeta\equiv0$, the non-ellipticity follows from the non-ellipticity of the usual $\curl$). Also our Example \ref{ex:algebraicobst2} gives the non-inequality in this case.
    \item For $\Bcal=\tr$ we obtain $2\div a$ which is not elliptic.
\end{itemize}

\subsubsection{}
An intricate constellation appears for $\Acal=\skew+\tr$ mapping $P\mapsto\skew P +\tr P\cdot\bbone$. Its kernel consists of trace-free symmetric matrices. For the corresponding operator we have on the symbol level:
\begin{align*}
 \begin{pmatrix} \boldsymbol\delta & \boldsymbol\alpha & \boldsymbol\beta \\ \boldsymbol\alpha & \boldsymbol\epsilon & \boldsymbol\gamma  \\ \boldsymbol\beta & \boldsymbol\gamma & -\boldsymbol\delta-\boldsymbol\epsilon \end{pmatrix} &\begin{pmatrix} 0 & -\xi_3 & \xi_2 \\ \xi_3 & 0 & -\xi_1 \\ -\xi_2 & \xi_1 & 0 \end{pmatrix}\notag\\
 &= \begin{pmatrix} \boldsymbol\alpha\xi_3 - \boldsymbol\beta\xi_2 & \boldsymbol\beta\xi_1-\boldsymbol\delta\xi_3 & \boldsymbol\delta\xi_2 -\boldsymbol\alpha\xi_1 \\ \boldsymbol\epsilon\xi_3 -\boldsymbol\gamma\xi_2 & \boldsymbol\gamma\xi_1-\boldsymbol\alpha\xi_3 & \boldsymbol\alpha\xi_2-\boldsymbol\epsilon\xi_1 \\ \boldsymbol\gamma\xi_3+(\boldsymbol\delta+\boldsymbol\epsilon)\xi_2 & -\boldsymbol\beta\xi_3-(\boldsymbol\delta+\boldsymbol\epsilon)\xi_1 & \boldsymbol\beta\xi_2 - \boldsymbol\gamma\xi_1 \end{pmatrix}\eqqcolon \boldsymbol{P}.
\end{align*}
We show that ellipticity occurs only for $\Bcal=\mathrm{Id}$ or $\Bcal=\dev$:
\begin{itemize}
 \item We start with $\Bcal=\mathrm{Id}$ and consider $\boldsymbol{P}=0$. Subtracting the $(1,3)$th from the $(3,1)$th entry we obtain $0=\boldsymbol\gamma\xi_3+\boldsymbol\epsilon\xi_2+\boldsymbol\alpha\xi_1$. Hence, multiplying with $\xi_1$, $\xi_2$ and $\xi_3$ we have over $\R$:
 \begin{align*}
  0 &= \boldsymbol\gamma\xi_1\xi_3+\boldsymbol\epsilon\xi_1\xi_2+\boldsymbol\alpha\xi_1^2 \overset{\boldsymbol{P}_{23}=0, \boldsymbol{P}_{22}=0}{=} \boldsymbol\alpha\abs{\xi}^2 \quad \overset{\xi\neq0}{\Longrightarrow}\quad \boldsymbol\alpha=0, \\
  0 &= \boldsymbol\gamma\xi_2\xi_3+\boldsymbol\epsilon\xi_2^2+\boldsymbol\alpha\xi_1\xi_2 \overset{\boldsymbol{P}_{21}=0, \boldsymbol{P}_{23}=0}{=} \boldsymbol\epsilon\abs{\xi}^2 \quad \overset{\xi\neq0}{\Longrightarrow}\quad \boldsymbol\epsilon=0,\\
  0 &= \boldsymbol\gamma\xi_3^3+\boldsymbol\epsilon\xi_2\xi_3+\boldsymbol\alpha\xi_1\xi_3 \overset{\boldsymbol{P}_{21}=0, \boldsymbol{P}_{22}=0}{=} \boldsymbol\gamma\abs{\xi}^2 \quad \overset{\xi\neq0}{\Longrightarrow}\quad \boldsymbol\gamma=0,
 \end{align*}
 and the condition $\boldsymbol{P}=0$ becomes
 $$
 \begin{pmatrix} - \boldsymbol\beta\xi_2 & \boldsymbol\beta\xi_1-\boldsymbol\delta\xi_3 & \boldsymbol\delta\xi_2  \\ 0 & 0 & 0 \\ \boldsymbol\delta\xi_2 & -\boldsymbol\beta\xi_3-\boldsymbol\delta\xi_1 & \boldsymbol\beta\xi_2  \end{pmatrix} = 0 \quad \Leftrightarrow \quad \begin{pmatrix}\xi_1 & -\xi_3 \\ \xi_3 & \xi_1 \end{pmatrix}\begin{pmatrix}\boldsymbol\beta \\ \boldsymbol\delta \end{pmatrix} = 0, \boldsymbol\delta\xi_2=0, \boldsymbol\beta\xi_2 =0
 $$
which for $\xi\neq0$ only has $\boldsymbol\beta=\boldsymbol\delta=0$ as solution over $\R$, meaning that the corresponding operator is $\R$-elliptic. On the contrary, the induced operator is not $\C$-elliptic:
$$\begin{pmatrix}-\imag & 0 & 1 \\ 0 & 0 & 0 \\ 1 & 0 & \imag \end{pmatrix} \begin{pmatrix} 0 & -\imag & 0 \\ \imag & 0 & -1 \\ 0 & 1 & 0 \end{pmatrix} = 0. $$
 \item For $\Bcal=\dev$ note that $\tr \boldsymbol P = 0$ so that we can argue as in the last item. Thus, only the Korn-Maxwell-Sobolev inequality of the first kind, Theorem \ref{thm:KMS-A}, holds true here:
    \begin{align*}
  \norm{P}_{\lebe^\frac{3p}{3-p}} &\lesssim \norm{\skew P}_{\lebe^\frac{3p}{3-p}}+\norm{\tr P}_{\lebe^\frac{3p}{3-p}}+\norm{\dev\Curl P}_{\lebe^p}
\end{align*}
for all $P\in\mathcal{W}_{\partial\Omega}$, an estimate which also follows from either of \cite{GLN,GmSp}. This is the best possible result involving the part map $\Acal=\skew+\tr$, cf. the following items.
 
 \item For $\Bcal=\sym$ or $\Bcal=\dev\sym$. The corresponding operator is not elliptic. Indeed, on the symbol level we have
 $$
 \sym\left[ \begin{pmatrix} 0 & 1 & 1 \\ 1 & 0 & 1 \\ 1 & 1 & 0 \end{pmatrix} \begin{pmatrix} 0 & -1 & 1 \\ 1 & 0 & -1 \\ -1 & 1 & 0 \end{pmatrix} \right] =\sym \begin{pmatrix} 0 & 1 & -1 \\ -1 & 0 & 1 \\ 1 & -1 & 0 \end{pmatrix}=0
 $$
  \item Also for $\Bcal=\skew$ or $\Bcal=\skew+\tr$ the induced operator is not elliptic. Indeed, on the symbol level we have
 $$
 \skew \left[ \begin{pmatrix}1 & 0 & 0 \\ 0 & -1 & 0 \\ 0 & 0 & 0 \end{pmatrix} \begin{pmatrix}0 & -1 & 0 \\ 1 & 0 & 0 \\ 0 & 0 & 0 \end{pmatrix} \right]=\skew \begin{pmatrix} 0 & -1 & 0 \\ -1 & 0 & 0 \\\ 0 & 0 & 0 \end{pmatrix} = 0.
 $$
 \item Finally, for $\Bcal=\tr$ the corresponding operator is not elliptic since we always have $\tr \boldsymbol P = 0$.
\end{itemize}

\subsubsection{}
For the part map $\Acal=\skew$  we have $\ker\Acal=\mathrm{Sym}(3)$  and already in the case $\Bcal=\mathrm{Id}$ the induced operator is not elliptic since on the symbol level we have:
\begin{equation}\label{eq:skewCurl}
\begin{pmatrix} 1 & 1 & 0 \\ 1 & 1 & 0 \\ 0 & 0 & 0 \end{pmatrix}\begin{pmatrix} 0 & 0 & -1 \\ 0 & 0 & 1 \\ 1 & -1 & 0 \end{pmatrix}=0.
\end{equation}
Thus, we cannot replace the symmetric part in \eqref{eq:KMSbasic} only by the skew-symmetric part.

\subsubsection{}
The linear part map $\Acal=\tr$ maps $P\mapsto\tr(P)\cdot\bbone$, so that the kernel consists of trace-free matrices. Again, already in the case $\Bcal=\mathrm{Id}$ the corresponding operator is not elliptic since on the symbol level we have:
\begin{equation}\label{eq:trCurl}
\begin{pmatrix} 1 & 1 & 0 \\ -1 & -1 & 0 \\ 0 & 0 & 0 \end{pmatrix}\begin{pmatrix} 0 & 0 & -1 \\ 0 & 0 & 1 \\ 1 & -1 & 0 \end{pmatrix}=0.
\end{equation}

\subsection{$\Div$-operator in all dimensions}
Let $n\ge 2$, $k=1$, $V=\widetilde{V}=\R^{n\times n}$ and consider the operator $\B=\Div$. We will see that the most interesting part map for this constellation is $\Acal=\dev$. Then, $\ker\Acal=\{\boldsymbol\alpha\cdot\bbone_n \,|\;\boldsymbol\alpha\in\R\}$ and we have  $\Div(\zeta\cdot\bbone_n) =\nabla\zeta$ which is a $\C$-elliptic operator, so that we even strengthen the result from \cite{BauerNeffPaulyStarke} for $1<p<n$:
\begin{align*}
  \norm{P}_{\lebe^\frac{np}{n-p}} &\lesssim \norm{\dev P}_{\lebe^\frac{np}{n-p}}+\norm{\Div P}_{\lebe^p}\quad \text{for all $P\in\sobo^{\frac{np}{n-p},\Div,p}_{0,\Gamma}(\Omega;\R^{n\times n})$},
  \shortintertext{and moreover:}
  \min_{\Pi\in\mathcal{K}_7}\norm{P-\Pi}_{\lebe^\frac{np}{n-p}} &\lesssim \norm{\dev P}_{\lebe^\frac{np}{n-p}}+\norm{\Div P}_{\lebe^p}\quad \text{for all $P\in\sobo^{\frac{np}{n-p},\Div,p}(\Omega;\R^{n\times n})$}.
\end{align*}
We have $\mathcal{K}_7=\{\boldsymbol\gamma\cdot\bbone_n\mid\boldsymbol\gamma\in\R\}$ so that with the normalised KMS-inequality in case $\mathbf{j}=0$, cf.~Corollary \ref{cor:KMS-Bn}, we recover \cite[Proposition 9.1.1]{Brezzi}:
\begin{equation*}
 \norm{P}_{\lebe^q(\Omega)}\le c\,(\norm{\dev P}_{\lebe^q(\Omega)}+\norm{\Div P}_{\sobo^{-1,q}(\Omega)}) \quad \text{for all $P$ satisfying $\textstyle\int_\Omega\tr P\dif x =0$.}
\end{equation*}
Considering now $\Acal=\sym$, then $\ker\Acal=\mathfrak{so}(n)$. In case $n=2$ the corresponding operator is $\C$-elliptic:
\begin{equation*}
 \Div \begin{pmatrix} 0 & \zeta \\ -\zeta & 0 \end{pmatrix} = \begin{pmatrix} \partial_2 \zeta \\ -\partial_1\zeta \end{pmatrix},
\end{equation*}
but in two space dimensions the divergence is just a rotated $\curl$. For $n\ge 3$ the corresponding operator is not elliptic. Indeed, on the symbol level we have
\begin{equation*}
\left(\begin{array}{@{}c|c@{}}
  \begin{matrix}
  0 & 1 \\
  -1 & 0
  \end{matrix}
  & \bigzero \\
\hline
  \bigzero &
  \bigzero
\end{array}\right)
 \begin{pmatrix} 0 \\ 0 \\ 1 \\ \bigzero \end{pmatrix} = 0.                                                 
\end{equation*}

\subsection{$\inc\!$-based operators in three space dimensions}\label{sec:incexamples}
Finally, let us focus on some higher order operators and for presentation reasons remain in three space dimensions, let $n=3$, $k=2$, $V=\widetilde{V}=\R^{3\times3}$ and consider the $\inc$-based operators $\B=\Bcal[\inc]$. We will see that the only nontrivial case  where ellipticity plays a role is for the part map $\Acal=\dev$.
\begin{figure}\centering
 \begin{tabular}{c|c|c|c|c|c|c|c|}
\diagbox[innerwidth=1.5cm]{$\Acal$}{$\Bcal$}& $\mathrm{Id}$ & $\dev$ & $\sym$ & $\dev\sym$ & $\skew+\tr$ & $\skew$ & $\tr$\\\hline
$\mathrm{Id}$ & \checkmark & \checkmark & \checkmark & \checkmark & \checkmark & \checkmark & \checkmark \\\hline 
$\dev$ & \checkmark & \checkmark & \checkmark & \checkmark & \diagbox[dir=SW]{$\R$-\checkmark}{$\C$-$\lightning$}  & $\lightning$ & \diagbox[dir=SW]{$\R$-\checkmark}{$\C$-$\lightning$}  \\\hline
$\sym$ & $\lightning$ & $\lightning$ & $\lightning$ & $\lightning$ & $\lightning$ & $\lightning$ & $\lightning$ \\\hline 
$\dev\sym$ & $\lightning$ & $\lightning$ & $\lightning$ & $\lightning$ & $\lightning$ & $\lightning$ & $\lightning$ \\\hline 
$\skew+\tr$ & $\lightning$ & $\lightning$ & $\lightning$ & $\lightning$ & $\lightning$ & $\lightning$ & $\lightning$ \\\hline 
$\skew$ & $\lightning$ & $\lightning$ & $\lightning$ & $\lightning$ & $\lightning$ & $\lightning$ & $\lightning$ \\\hline 
$\tr$ & $\lightning$ & $\lightning$ & $\lightning$ & $\lightning$ & $\lightning$ & $\lightning$ & $\lightning$ \\\hline 
\end{tabular}
\caption{Overview when $\Bcal[\inc]|_{\hold^\infty_c(\R^n;\ker\Acal)}$ is ($\C$-)elliptic, where \checkmark means $\C$-ellipticity,  $\lightning$ denotes non-ellipticity, $\R$-\checkmark / $\C$-$\lightning$ means $\mathbb{R}$-ellipticity but no $\C$-ellipticity. }\label{fig:maintableInc}
\end{figure}
\subsubsection{} With $\Acal=\dev$ we have $\ker\Acal=\{\boldsymbol\alpha\cdot\bbone\,|\;\boldsymbol\alpha\in\R\}$, so that the corresponding operator becomes  $\inc(\zeta\cdot\bbone) = \Delta\zeta\cdot\bbone_3-\D\nabla\zeta\in\mathrm{Sym}(3)$. Thus,
\begin{itemize}
 \item For $\Bcal\in\{\mathrm{Id},\sym\}$ we obtain a $\C$-elliptic operator. Indeed, on the symbol level we consider
 \begin{align*}
  \boldsymbol\alpha\,\skalarProd{\xi}{\xi}\cdot\bbone_3-\boldsymbol\alpha\,\xi\otimes\xi \overset{!}{=}0 \quad \overset{\xi\in\C\backslash\{0\}}{\Longrightarrow}\quad \boldsymbol\alpha=0.
 \end{align*}
 For the corresponding kernel we consider $\inc(\zeta\cdot\bbone) = \Delta\zeta\cdot\bbone_3-\D\nabla\zeta=0$, thus, taking the trace $2\Delta\zeta=0$, so that we have $\D\nabla\zeta=0$ and
 we obtain here for the kernel $\mathcal{K}=\{(\skalarProd{\a}{x}+\boldsymbol\alpha)\cdot\bbone_3\mid\a\in\R^3,\boldsymbol\alpha\in\R\}$. Corollary \ref{cor:KMS-Bn} is applicable with $\mathbf{j}=1$:
 $$
 \norm{\D P}_{\lebe^q(\Omega)}\le c\,(\norm{\D\dev P}_{\lebe^q(\Omega)}+\norm{\sym\inc P}_{\sobo^{-1,q}(\Omega)})
 $$
 for all $P$ that satisfy $\int_{\Omega}\D\,(\tr P)\dif x = 0$.
 
 \item With $\Bcal\in\{\dev,\dev\sym\}$ we consider also a $\C$-elliptic operator. Indeed, again on the symbol level we conclude
 \begin{align*}
  \frac13\boldsymbol\alpha\,\skalarProd{\xi}{\xi}\cdot\bbone_3-\boldsymbol\alpha\,\xi\otimes\xi \overset{!}{=}0 \quad \overset{\xi\in\C\backslash\{0\}}{\Longrightarrow}\quad \boldsymbol\alpha=0.
 \end{align*}
 Thus, with $1<p<3$ the strongest estimates among the previous combinations read: for all $P\in\hold^\infty_c(\R^3;\R^{3\times3})$ it holds
     \begin{align*}
  \norm{P}_{\dot\sobo^{1,\frac{3p}{3-p}}(\R^3)} &\lesssim \norm{\dev P}_{\dot\sobo^{1,\frac{3p}{3-p}}(\R^3)}+\norm{\dev\sym\inc P}_{\lebe^p(\R^3)},
  \shortintertext{and for all $P\in\hold^\infty(\overline{\Omega};\R^{3\times3})$:}
  \Aboxed{
  \min_{\Pi\in\mathcal{K}_8}\norm{P-\Pi}_{\sobo^{1,\frac{3p}{3-p}}(\Omega)} &\lesssim \norm{\dev P}_{\sobo^{1,\frac{3p}{3-p}}(\Omega)}+\norm{\dev\sym\inc P}_{\lebe^p(\Omega)},
  }
 \end{align*}
 and to the best of our knowledge, these estimates are new.
 \item If $\Bcal=\skew$ then $\skew \inc(\zeta\cdot\bbone)\equiv0$ and the operator is not elliptic.
 \item For $\Bcal\in\{\skew+\tr,\tr\}$ the corresponding operator is $\R$-elliptic but not $\C$-elliptic, since on the symbol level we have:
\begin{equation*}
 2\boldsymbol\alpha\skalarProd{\xi}{\xi} \overset{!}{=}0,
\end{equation*}
which over $\R$ only has the trivial solution, whereby over $\C$ we can take $\xi=(1,\imag,0)^\top$. Thus, only Theorem \ref{thm:KMS-A} applies here, so that  for all $1<p<3$ and all $P\in\hold^\infty_c(\R^3;\R^{3\times3})$ we have 
    \begin{align*}
    \Aboxed{
  \norm{P}_{\dot\sobo^{1,\frac{3p}{3-p}}(\R^3)} &\lesssim \norm{\dev P}_{\dot\sobo^{1,\frac{3p}{3-p}}(\R^3)}+\norm{\tr\inc P}_{\lebe^p(\R^3)}.
  }
  \end{align*}
\end{itemize}

\subsubsection{}
The corresponding operators in case of the part maps 
\begin{align*}
\Acal\in\{\sym,\dev\sym,\skew+\tr,\skew,\tr\}
\end{align*}
 are all non-elliptic:
\begin{itemize}
  \item If $\Acal=\sym$ then $\ker\Acal=\mathfrak{so}(3)=\{\Anti \a\,|\; \a\in\R^3\}$, and the corresponding operator becomes
  $$ \inc(\Anti a) = -\Anti(\nabla\div a),$$
  which is not elliptic, since the divergence is already not elliptic.
  \item For $\Acal=\dev\sym$ we have $\ker\Acal=\{\Anti \a+\alpha\cdot \bbone_3\,|\;(\a,\boldsymbol\alpha)^\top\in\R^4\}$  and the operator is 
  $$ \inc(\Anti a + \zeta\cdot\bbone_3) = -\Anti(\nabla\div a) + \Delta\zeta\cdot\bbone_3-\D\nabla\zeta,$$
  which is not elliptic.
   \item If $\Acal=\skew+\tr$ then $\ker\Acal$ consists of trace-free symmetric matrices and already in the case $\Bcal=\mathrm{Id}$ the induced operator is not elliptic since on the symbol level we have
   $$
   \begin{pmatrix} 0 & -1 & 1 \\ 1 & 0 & 0 \\ -1 & 0 & 0 \end{pmatrix} \begin{pmatrix} 0 & 1 & 1 \\ 1 & 0 & 0 \\ 1 & 0 & 0 \end{pmatrix} \begin{pmatrix} 0 & -1 & 1 \\ 1 & 0 & 0 \\ -1 & 0 & 0 \end{pmatrix} = 0.
   $$
   \item When $\Acal\in\{\skew,\tr\}$ the non-ellipticity of the induced operator follows with the same examples as in \eqref{eq:skewCurl} and \eqref{eq:trCurl}, respectively.
\end{itemize}

\begin{alphasection}
\section{Appendix: Differential operators and algebraic identities}\label{sec:appendix}
In this paragraph we briefly revisit some differential operators and the underlying algebraic identities that have been employed in the main part of the paper. In order to  elaborate on potential links to applications, we thus put a special emphasis on the three-dimensional case.

Let us start with the general case $n\ge2$. To define for $P:\R^n\to\R^{n\times n}$ the matrix curl $\Curl P$, we recall from  \cite{Lew, LN4-tracefree} the inductive definition of the generalised cross product $\ntimes{n}:\R^n \times \R^n\to\R^{\frac{n(n-1)}{2}}$ via 
\begin{align}\label{eq:cross}
 \a\ntimes{n}\boldsymbol{b} \coloneqq \begin{pmatrix} \overline{\a}\ntimes{n-1}\overline{\boldsymbol{b}} \\[1ex]
                \boldsymbol{b}_n\cdot\overline{\a}-\a_n\cdot\overline{\boldsymbol{b}}
                 \end{pmatrix}\in\R^{\frac{n(n-1)}{2}}
                \quad \text{with}\quad
\begin{pmatrix}\a_1\\\a_2  \end{pmatrix} \ntimes{2}
\begin{pmatrix}\boldsymbol{b}_1\\\boldsymbol{b}_2  \end{pmatrix} \coloneqq \a_1\,\boldsymbol{b}_2-\a_2\,\boldsymbol{b}_1.
\end{align}
for
\begin{align*}
 \a =(\overline{\a},\a_n)^\top\in\R^n\quad \text{and}\quad \boldsymbol{b} =(\overline{\boldsymbol{b}},\boldsymbol{b}_n)^\top\in\R^n \quad \text{where $\overline{\a}, \overline{\boldsymbol{b}}\in\R^{n-1}$}.
\end{align*}
Due to the linearity in the second component of the generalised cross product $\a\ntimes{n}\boldsymbol\cdot$ it can be expressed by a multiplication with a matrix, which we denote $\nMat{\a}{n}\in\R^{\frac{n(n-1)}{2}\times n}$, so that
\begin{align}\label{eq:frobenius}
 \a\ntimes{n}\boldsymbol{b}\eqqcolon \nMat{\a}{n}\boldsymbol{b}\qquad \text{for all } \ \boldsymbol{b}\in\R^n.
\end{align}
Thus, for a vector field $a\colon\R^{n}\to\R^{n}$ or a matrix field $P\colon\R^{n}\to\R^{r\times n}$, with $r\in\N$, we define $\curl a$ and row-wise $\Curl P$ by 
\begin{align}\label{eq:CurlDef}
\begin{split}
\curl a\coloneqq a\ntimes{n}(-\nabla)=\nMat{\nabla}{n}a,\qquad
\Curl P\coloneqq P\ntimes{n}(-\nabla)= P\nMat{\nabla}{n}^\top,
\end{split}
\end{align}
meaning that the corresponding symbol maps read for $\xi\in\R^n$:
\begin{equation}
 -\a\ntimes{n}\xi=\nMat{\xi}{n}\a, \quad \text{and}\quad -\boldsymbol P\ntimes{n}\xi=\boldsymbol P\nMat{\xi}{n}^\top,
\end{equation}
for $\a\in\R^n$ and $\boldsymbol P\in\R^{r\times n}$. Furthermore, for square matrix fields $P:\R^n\to\R^{n\times n}$ the incompatibility operator is defined by
\begin{equation}\label{eq:defInc}
 \inc P \coloneqq \Curl \big( [\Curl P]^\top \big)= \nMat{\nabla}{n} P^\top\nMat{\nabla}{n}^\top = -\nabla\ntimes{n} P^\top \ntimes{n} \nabla
\end{equation}
and is also referred to as $\Curl\Curl^\top$ in the literature; see \textsc{Kr\"{o}ner} \cite{Kroener} for the continuum mechanical background.

Let us now draw particular attention to the three-dimensional case $n=3$. Here, this construction is usually related to the classical cross product, so that for $P=\begin{pmatrix} a & b & c\end{pmatrix}^{\top}$ with $a,b,c\colon \R^{3}\to\R^{3}$, we here denote $\Curl$ the row-wise classical curl, so 
\begin{align*}
 \Curl P\coloneqq \begin{pmatrix} \curl a & \curl b & \curl c\end{pmatrix}^{\top}.
 \end{align*}
However, it follows from our main theorems above, that in three dimensions it does not matter which curl operator (i.e. related to the classical cross product or to the generalised one) we require on the right-hand side, since both matrix curl operators have the same wave cone. For the following, it is useful to define the linear map $\Anti\colon\R^{3}\to\mathfrak{so}(3)$ via 
\begin{align}\label{eq:antidef}
\Anti\colon (\boldsymbol\alpha,\boldsymbol\beta,\boldsymbol\gamma)^\top\mapsto \begin{pmatrix} 0 & -\boldsymbol\gamma & \boldsymbol\beta \\ \boldsymbol\gamma & 0 & -\boldsymbol\alpha \\ -\boldsymbol\beta & \boldsymbol\alpha & 0 \end{pmatrix}. 
\end{align}
This special identification was chosen in such a way that it is related to the usual cross product. Thus, the matrix $\Curl$ has as corresponding symbol map that arises as the multiplication with $\Anti(-\xi)$ from the right:
\begin{equation}
 -\boldsymbol P \Anti \xi, \quad \text{for }\xi\in \R^3, \boldsymbol P\in\R^{3\times 3}.
\end{equation}
We then have, for differentiable maps $a\colon\R^{3}\to\R^{3}$, \emph{\textsc{Nye}'s formulas} \cite{Nye}
\begin{align}\label{eq:nye1}
\begin{split}
\Curl \Anti a &= \div a \cdot\bbone_3 -\D a^\top,\\
\D a &= \frac{\tr\Curl\Anti a}{2}\cdot\bbone_3 - (\Curl \Anti a)^\top.
\end{split}
\end{align}
Especially, this implies the identity:\quad $\dev\sym\Curl\Anti a = -\dev\sym\D a$.
\end{alphasection}

\vspace{0.25cm}
\noindent{\small \textbf{Acknowledgment.} Franz Gmeineder gratefully acknowledges financial support through the Hector foundation. Peter Lewintan acknowledges financial support by the University of Konstanz for a stay in October 2022, when the present paper was finalised. Patrizio Neff is supported within the Project-ID 440935806 and the Project-ID 415894848 by the Deutsche Forschungsgemeinschaft. The authors are moreover thankful to Tabea  Tscherpel for pointing out references \cite{Brezzi,Carstensen2}. }

\end{document}